\newtheorem{theo}{Theorem}[section]
\newtheorem{lemma}[theo]{Lemma}
\newtheorem{cor}[theo]{Corollary}
\newtheorem{prop}[theo]{Proposition}
\theoremstyle{definition}
\newtheorem{definition}[theo]{Definition}
\newtheorem{remark}[theo]{Remark}
\newcommand{\N}{\mathbb{N}}
\newcommand{\R}{\mathbb{R}}
\newcommand{\eps}{\varepsilon}
\newcommand{\weakly}{\rightharpoonup}
\newcommand{\weakstar}{\stackrel{\ast}{\rightharpoonup}}
\newcommand{\gre}[1]{\stackrel{#1}{\rightharpoonup}}
\def\hom{\mathop{\rm hom}}
\def\qcls{\mathop{\rm qcls}}
\def\qc{\mathop{\rm qc}}
\def\sm{\mathop{\rm sym}}
\def\lsc{\mathop{\rm lsc}}
\def\dev{\mathop{\rm dev}}
\def\tr {\mathop{\rm tr}}
\def\DIV{\mathop{\rm div}}
\def\im{\mathop{\rm im}}
\def\Int{\mathop{\rm Int}}
\def\dom{\mathop{\rm dom}}
\def\Int{\mathop{\rm Int}}
\def\E {\mathfrak{E}}
\def\F {{\cal F}}
\def\L {\mathcal L}
\def\G {{\cal G}}
\def\i {\infty}
\def\Y {{\mathbb I}}
\def\x {\ dx}
\def\y {\ dy}
\def\sm{\mathop{\rm sym}}
\newcommand{\M}[2]{\R^{#1 \times #2}}
\def\prostor{\, \underline{\phantom{x}} \,}
\def\XXint#1#2#3{{\setbox0=\hbox{$#1{#2#3}{\int}$}
     \vcenter{\hbox{$#2#3$}}\kern-.5\wd0}}
\begin{document}

\begin{center}
\begin{Large}
Homogenization and the limit of vanishing hardening in Hencky plasticity with non-convex potentials 
\end{Large}
\end{center}

\begin{center}
\begin{large}
Martin Jesenko and Bernd Schmidt\\[0.2cm]
\end{large}
\begin{small}
Institut f{\"u}r Mathematik,\\ 
Universit{\"a}t Augsburg\\ 
86135 Augsburg, Germany\\ 
{\tt martin.jesenko@math.uni-augsburg.de}\\
{\tt bernd.schmidt@math.uni-augsburg.de}
\end{small}
\end{center}

\begin{abstract}
We prove a homogenization result for Hencky plasticity functionals with non-convex potentials. We also investigate the influence of a small hardening parameter and show that homogenization and taking the vanishing hardening limit commute. 
\end{abstract}
Mathematics Subject Classification 49J45 · 74C05 · 74Q05 

\tableofcontents

\section{Introduction and main results}
%
\begin{trivlist}
\item
A classical problem in material science is to deduce effective properties of composites with highly oscillatory material properties through a homogenization procedure. For hyper-elastic solids with a fine periodic structure of microscopic size $\eps \ll 1$, an effective model can be obtained rigorously with variational methods in the limit $\eps \to 0$ of the corresponding family of $\eps$-dependent stored energy functionals. The resulting $\Gamma$-limit is an integral functional with a homogeneous stored energy function, in which the microstructural effects are homogenized in such a way that minimizers (under suitable boundary conditions and applied forces) approximate minimizers of the $\eps$-dependent functionals. 
\item
In linearized elasticity the stored energy function, which acts on the linearized strain, is a `single-well' quadratic function minimized at $0$. The homogenization problem for such functions was being extensively studied already in the seventies. We refer to \cite{Marcellini:78} and the references therein. More generally, in \cite{Marcellini:78} Marcellini considers convex integrands with a suitable $p$-growth assumption. In finite elasticity this problem has been solved by Braides \cite{Braides:85} and M{\"u}ller \cite{Mueller:87}. In this setting the problem is considerably more involved as physically reasonable energy functions are necessarily non-convex: a typical single-well energy function is minimized on the non-convex set ${\rm SO}(3)$. 
\item
Yet, even for small strains non-convex energy densities are encountered when modeling materials with different `variants' represented by different energy wells. Such multi-well potentials occur, e.g., in the martensitic phase of shape memory alloys (see, e.g., \cite{Bhattacharya} for more details). The energy functionals are then only `geometrically linear' in the sense that the energy density, while still only dependent on the linearized strain, may be a general non-quadratic function, see, e.g., \cite{Khachaturyan:67,Khachaturyan:83,KhachaturyanShatalov:69,Roitburd:67,Roitburd:78,Schmidt}. 
\item
In perfect small strain elastoplasticity (i.e.\ with zero hardening) the material behavior beyond the elastic regime is modeled by a flat relation between stress and deviatoric strain. After passing the yield surface, the stored energy thus grows linearly in the deviatoric strain, yet still quadratic in the hydrostatic strain. We refer to the seminal article \cite{Suquet} of Suquet for a mathematical treatment of the evolution problem in perfect elastoplasticity as well as to the classical volumes of Duvaut and Lions \cite{DuvautLions} and of Hl\'{a}va\v{c}ek and Ne\v{c}as \cite{HlavacekNecas} and the more recent book by Han and Reddy \cite{HanReddy} for an introduction to the mathematical description of plastic behavior and a discussion of various different models. 
\item
A static description of this `pseudoelastic' regime through stored energy functionals with mixed linear-quadratic growth is referred to as the Hencky plasticity model. (Of course, its validity is restricted to one-time loading since it cannot include hysteresis effects.) Due to the linear growth conditions, the Hencky plasticity functional is not coercive on Sobolev spaces, and more general displacements in the space $BD$ of functions of bounded deformation have to be taken into account. While existence results in this setting have been obtained by Anzellotti and Giaquinta in \cite{AnzellottiGiaquinta:80,AnzellottiGiaquinta}, only recently, Mora showed that such convex $BD$ functionals are in fact the relaxed Hencky plasticity functionals on Sobolev spaces. We also refer to the monograph of Temam \cite{Temam} for related results. 
\item
The situation, however, is considerably easier if one introduces a finite hardening parameter, which leads to regularized functionals on $W^{1,2}$. The natural question, if Hencky plasticity is an effective model in the limit of vanishing hardening has been positively answered (even in a time-dependent) setting in \cite{BartelsMielkeRoubicek}. 
\item
A homogenization result in convex Hencky plasticity was obtained by Demengel and Qi, \cite{DemengelQi}. The resulting functional then acts on general $BD$-functions, and the limiting stored energy is given in terms of a convex function applied to the limiting strain in the sense of \cite{DemengelTemam:84,DemengelTemam}, which now is merely a measure rather than a function. 
\end{trivlist}
\subsection*{Main results} 
\begin{trivlist}
\item
A main aim of the present contribution is to derive a homogenized model in small strain Hencky plasticity with non-convex potentials, see Theorem \ref{theo:Hencky-epilog}. As alluded to above, such functionals may describe fine mixtures of martensites in shape memory alloys beyond their (super-)elastic regime. More precisely, we will consider functionals with linear growth in the deviatoric part and quadratic growth in the trace, which merely satisfy an asymptotic convexity condition at infinity. In particular, we allow for general non-linear and non-convex stored energy functions within the (super-)elastic regime. 
\item
Let $\Omega \subset \R^n$ be a domain with Lipschitz boundary. For a function of bounded deformation $u \in BD(\Omega)$, we denote by $(\E u) \, \L^{n} $ the absolutely continuous part and by $E^s u$ the singular part with respect to Lebesgue measure  $\L^{n}$ of the symmetrized distributional derivative $Eu = \frac{1}{2}(Du + (Du)^T)$. (See Section~\ref{section:Preliminaries} for the definition and basic properties of $BD(\Omega)$.) If $E^s u = 0$, we also write $u \in LD(\Omega)$. Adapted to the Hencky plasticity setting, we introduce 
\begin{align*}
  U( \Omega ) 
  &:= \{ u \in BD( \Omega ) : \DIV u \in L^{2}( \Omega ) \}, \\ 
  LU( \Omega ) 
  &:= \{ u \in LD( \Omega ) : \DIV u \in L^{2}( \Omega ) \}.
\end{align*}
(Basic properties of these spaces are discussed in Subsection~\ref{subsection:U}.) Suppose that $ f : \R^{n} \times \M{n}{n}_{\sm} \to \R $ is a Carath\'{e}odory function that is $ \Y^{n} $-periodic in the first variable (where $\Y = (0,1)$) and satisfies the growth condition of Hencky plasticity 
\begin{align}\label{eq:Hencky-growth} 
  \alpha( | X_{\dev} | + ( \tr X )^{2} ) \le f( x , X ) \le \beta( | X_{\dev} | + ( \tr X )^{2} + 1 ) 
\end{align}
for suitable $ \alpha , \beta > 0 $ for a.e.~$ x \in \Omega $ and every $ X \in \M{n}{n}_{\sm} $. Here $X_{\dev} = X - \frac{\tr X}{n} I$ denotes the deviatoric part of $X$. 
We will also write $ \M{n}{n}_{\dev} = \{ X \in \M{n}{n}_{\sm} : \tr X = 0 \} $. In analogy to the elastic setting we define the homogenized density by 
\begin{align}\label{eq:homogenization-formula} 
 f_{\hom}(X) 
:= \inf_{k \in \N} \inf_{ \varphi \in C_{c}^{\i}( k \Y^{n} ; \R^{n} ) } \frac{1}{ k^{n} } \int_{ k \Y^{n} } f( x , X + \E \varphi(x) ) \x.
\end{align}
We also introduce the {\em asymptotic function} $g^{\#}$ of a general (not necessarily convex) function $g : \R^N \to \R$ by setting 
\[ g^{\#}(X) := \limsup_{ t \to \i } \frac{ g(tX) }{t}. \]
We finally require $f$ to satisfy the following {\it asymptotic convexity condition}: 
For every $ \eta > 0 $ there are $ \beta_{\eta} > 0 $ and a Carath\'{e}odory function $ c^{\eta} : \R^{n} \times \M{n}{n}_{\sm} \to \R $ 
that is $ \Y^{n} $-periodic in the first variable and convex in the second such that
\begin{align}\label{eq:asymptotic-convexity} 
  | f(x,X) - c^{\eta}(x,X) | \le \eta (|X_{\dev}| + (\tr X)^2) + \beta_{\eta}. 
\end{align}
for a.e.~$ x \in \R^{n} $ and all $ X \in \M{n}{n}_{\sm} $. 
\begin{theo}
\label{theo:Hencky-epilog}
Suppose $ f : \R^{n} \times \M{n}{n}_{\sm} \to \R $ is a Carath\'{e}odory function 
that is $ \Y^{n} $-periodic in the first variable and satisfies \eqref{eq:Hencky-growth} and \eqref{eq:asymptotic-convexity}. 
Then the functionals 
\[ \F_{\eps}(u) := \left\{ \begin{array}{ll}
\int_{ \Omega } f \big( \tfrac{x}{ \eps } , \E u(x) \big) \x, & u \in LU( \Omega ), \\
\i, & {\rm else,}
\end{array} \right. \]
$\Gamma( L^{1} )$-converge to 
\[ \F_{\hom}(u) := \left\{ \begin{array}{ll}
\int_{ \Omega } f_{\hom} \big( \E u(x) \big) \x
+ \int_{ \Omega } ( f_{\hom} )^{\#} \big( \tfrac{ d E^{s} u }{ d | E^{s} u | }(x) \big) \ d| E^{s} u |(x), & u \in U( \Omega ), \\
\i, & {\rm else.}
\end{array} \right. \]
\end{theo}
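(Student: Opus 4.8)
The plan is to establish the $\Gamma$-convergence result through a two-step reduction. The first and conceptually cleanest step is to separate the non-convexity from the homogenization: using the asymptotic convexity condition \eqref{eq:asymptotic-convexity}, I would compare $\F_\eps$ with the functionals $\F_\eps^\eta$ built from the convex integrands $c^\eta$. Since $|f - c^\eta| \le \eta(|X_{\dev}| + (\tr X)^2) + \beta_\eta$, on the admissible class $LU(\Omega)$ one has $|\F_\eps(u) - \F_\eps^\eta(u)| \le \eta\, C(u) + \beta_\eta |\Omega|$ where $C(u)$ is controlled by the Hencky-type energy of $u$, which in turn is comparable (up to constants from \eqref{eq:Hencky-growth}) to $\F_\eps(u)$ itself. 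Hence on sublevel sets the two families of functionals are uniformly close, with error $O(\eta)$ plus an $\eta$-dependent constant. The key point is that the \emph{convex} homogenization problem for $c^\eta$ is exactly the setting of Demengel--Qi \cite{DemengelQi}: $\F_\eps^\eta$ $\Gamma(L^1)$-converges to the $BD$-relaxed functional with density $(c^\eta)_{\hom}$ on the absolutely continuous part and its recession function $((c^\eta)_{\hom})^\#$ (which for convex functions is the usual recession function) on the singular part, over the space $U(\Omega)$. Here I would also need that the growth bounds \eqref{eq:Hencky-growth}, inherited by $c^\eta$ with slightly worse constants, guarantee $c^\eta$ falls within the scope of that convex homogenization theorem, and that the natural domain for the limit functional is indeed $U(\Omega)$ (finite energy forces $\DIV u \in L^2$ via the quadratic trace term, uniformly in $\eps$).

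The second step is to pass $\eta \to 0$ at the level of the homogenized densities and show the limits match. One must verify that $(c^\eta)_{\hom} \to f_{\hom}$ (locally uniformly, or at least pointwise with uniform growth control) as $\eta \to 0$; this follows because the homogenization formula \eqref{eq:homogenization-formula} is monotone and $1$-Lipschitz-type stable under the additive perturbation $|f - c^\eta| \le \eta(\cdots) + \beta_\eta$: taking infima over $\varphi \in C_c^\infty(k\Y^n;\R^n)$ and averaging, the perturbation contributes at most $\eta \cdot(\text{homogenized growth term}) + \beta_\eta$, so $|(c^\eta)_{\hom}(X) - f_{\hom}(X)| \le \eta(|X_{\dev}| + (\tr X)^2) + \beta_\eta + o(1)$, which is not yet $o(1)$ uniformly. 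This is the delicate bookkeeping point: the constant $\beta_\eta$ does not vanish, so one cannot simply conclude $(c^\eta)_{\hom} \to f_{\hom}$ pointwise. The resolution I would pursue is to \emph{not} send $\eta \to 0$ inside the density but rather to use a diagonal argument at the functional level together with the stability of $\Gamma$-convergence under uniformly-close perturbations with the $O(\eta)$ relative error dominating on fixed sublevel sets; one recovers the true limit $\F_{\hom}$ by noting that both $\liminf$ and $\limsup$ inequalities for $\F_\eps$ are sandwiched between those for $\F_\eps^\eta$ up to errors $\eta C + \beta_\eta$ with $C$ finite on the relevant sequences, and then — crucially — that $f_{\hom}$ itself inherits the asymptotic convexity \eqref{eq:asymptotic-convexity} with the \emph{same} $\eta$ and a (possibly larger) $\beta_\eta$, so that $f_{\hom}$ and $(c^\eta)_{\hom}$ are related in exactly the way needed to close the estimates.

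Concretely, for the \textbf{liminf inequality}: given $u_\eps \to u$ in $L^1$ with $\sup_\eps \F_\eps(u_\eps) < \infty$, the growth bound \eqref{eq:Hencky-growth} gives a uniform bound on $\int |\E u_\eps|_{\dev} + (\tr \E u_\eps)^2$, hence $u_\eps$ is bounded in $BD(\Omega)$ with $\DIV u_\eps$ bounded in $L^2$; so (up to subsequence) $u_\eps \weakstar u$ in $BD$, $\DIV u_\eps \weakly \DIV u$ in $L^2$, and $u \in U(\Omega)$. Then $\F_\eps(u_\eps) \ge \F_\eps^\eta(u_\eps) - \eta C - \beta_\eta|\Omega|$, take $\liminf_\eps$ and apply the Demengel--Qi liminf inequality for $c^\eta$ to get $\liminf \F_\eps(u_\eps) \ge \F^\eta_{\hom}(u) - \eta C - \beta_\eta |\Omega|$; finally express $\F^\eta_{\hom}(u)$ in terms of $\F_{\hom}(u)$ using the asymptotic convexity transferred to $f_{\hom}$, and let $\eta \to 0$ — the $\beta_\eta$ terms cancel between the two sides. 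For the \textbf{limsup inequality (recovery sequence)}: for $u \in U(\Omega)$, first use the convex theory to get a recovery sequence $u_\eps \to u$ for $\F^\eta_{\hom}$, then the closeness estimate bounds $\limsup \F_\eps(u_\eps) \le \F^\eta_{\hom}(u) + \eta C + \beta_\eta|\Omega|$, rewrite via $f_{\hom}$, and diagonalize in $\eta$. The \textbf{main obstacle} I anticipate is precisely this handling of the non-vanishing additive constant $\beta_\eta$ in the asymptotic convexity estimate: the clean way through is to prove that $f_{\hom}$ satisfies \eqref{eq:asymptotic-convexity} with density $(c^\eta)_{\hom}$ (a lemma that requires commuting the homogenization infimum with the convex approximation — itself a small but genuine argument using \eqref{eq:homogenization-formula} and the growth bounds), so that the errors on the two sides of every inequality are structurally identical and cancel in the limit, rather than trying to force convergence of the densities directly. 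A secondary technical point is verifying that the recession function behaves well, i.e.\ that $(f_{\hom})^\#$ is the correct limiting singular density and that it too can be approximated by $((c^\eta)_{\hom})^\#$; here one uses that $f_{\hom}$ is asymptotically convex, hence $(f_{\hom})^\#$ is a genuine convex $1$-homogeneous function and the $\limsup$ in its definition is actually a limit, matching the recession function used in the Demengel--Qi framework.
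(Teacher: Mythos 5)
Your overall architecture --- compare $\F_\eps$ with the convex functionals built from $c^\eta$, invoke Demengel--Qi for those, then remove $\eta$ --- is the paper's strategy only for the \emph{singular} part of the $\liminf$ inequality, and the reason is exactly the obstacle you flag but do not actually overcome: the additive constants $\beta_\eta$ do not cancel. In every comparison you write, both replacements ($f \rightsquigarrow c^\eta$ at level $\eps$, and $c^\eta_{\hom} \rightsquigarrow f_{\hom}$ at the homogenized level) cost you $\beta_\eta|\Omega|$ \emph{in the same unfavorable direction}, so you end with $\liminf_\eps \F_\eps(u_\eps) \ge (1-\eta/\alpha)\F_{\hom}(u) - \eta C - 2\beta_\eta|\Omega|$ and the symmetric estimate for the $\limsup$. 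Since asymptotic convexity only requires $f$ to be near a convex function \emph{at infinity}, $\beta_\eta$ is genuinely nonvanishing (typically $\beta_\eta \to \infty$) as $\eta \to 0$; if $\beta_\eta \to 0$ were achievable, $f$ would already be convex. There is no choice of bookkeeping under which these terms "cancel between the two sides": they accumulate. The scheme closes only where the density is post-processed by the positively $1$-homogeneous operation $g \mapsto g^{\#}$, which annihilates additive constants and turns the $\eta(|X_{\dev}|+(\tr X)^2)$ error into a multiplicative factor $(1\pm\eta/\alpha)$; that is why the paper derives $(1-\tfrac{\eta}{\alpha})(f_{\hom})^{\#} \le (c^\eta_{\hom})^{\#} \le (1+\tfrac{\eta}{\alpha})(f_{\hom})^{\#}$ and uses the convex comparison exclusively for $\mu^s$. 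A related slip: $(f_{\hom})^{\#}$ is not a convex $1$-homogeneous function in general --- $f_{\hom}$ is only symmetric-quasiconvex --- so the identification with the Demengel--Qi recession density cannot be made for $f_{\hom}$ itself, only for $(c^\eta_{\hom})^{\#}$.

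Consequently you are missing the two main ingredients of the actual proof. For the absolutely continuous part of the $\liminf$ inequality the paper works directly with $f$ (no convexity at all): a blow-up at $L^{q}$-differentiability points of $u \in BD$ combined with De Giorgi's slicing, where the cut-off destroys the $L^2$-integrability of the divergence and must be repaired by a Bogovskii-operator correction. For the $\limsup$ inequality the paper does not use recovery sequences from the convex theory either; it proves that the candidate limit functional is $\langle\cdot\rangle$-strictly continuous on $U(\Omega)$ (Proposition~\ref{prop:c-strict-continuity}, via Reshetnyak-type continuity and a careful splitting of the quadratic trace growth), approximates $u$ by smooth functions in that topology, and combines this with the elementary inequality $\Gamma\text{-}\limsup_\eps \F_\eps \le \lsc\,\G$ obtained from the hardening regularization. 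You would also need the additional normalizations of Lemma~\ref{lemma:c-properties} (closedness of $\dom (c^\eta)^*$ and of $\dom (c^\eta_{\hom})^*$) before Demengel--Qi is legitimately applicable, a point the paper makes explicitly and which your proposal passes over.
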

\begin{remark}
\label{remark:Hencky-epilog}
Without condition \eqref{eq:asymptotic-convexity} we still have 
\[ \Gamma( L^{1} ) \mbox{-} \limsup_{ \eps \to 0 } \F_{\eps} \le \F_{\hom}, \]
on $L^1(\Omega; \R^n)$ and 
\[ \Gamma( L^{1} ) \mbox{-} \lim_{ \eps \to 0 } \F_{\eps}(u) = \F_{\hom}(u) \]
for every $ u \in LU( \Omega ) $, i.e., only the $\liminf$-inequality at singular points requires \eqref{eq:asymptotic-convexity}. 
\end{remark} 
\item
By adding a small hardening parameter $\delta$, one obtains regularized functionals on Sobolev spaces to which the homogenization results in \cite{Braides:85,Mueller:87} apply. However, it is by no means obvious if upon sending $\delta$ to zero, the regularized homogenized functionals will converge to the homogenized Hencky plasticity functional. Our second main result shows that this is indeed the case: Homogenization and the taking the limit of vanishing hardening commute. 
\item
For $f$ as above and any $ \delta \ge 0 $ we set
\[ f^{(\delta)} : \R^{n} \times \M{n}{n}_{\sm} \to \R,
\quad
f^{(\delta)}( x , X ) := f( x , X ) + \delta | X_{\dev} |^{2}, \]
so that $f^{(\delta)}$ satisfies a standard $2$-growth assumption. We define the homogenized density $f_{\hom}^{(\delta)}$ in analogy to \eqref{eq:homogenization-formula}. 
\begin{theo}
\label{theo:Hencky-commutability}
Suppose $ f : \R^{n} \times \M{n}{n}_{\sm} \to \R $ is a Carath\'{e}odory function 
that is $ \Y^{n} $-periodic in the first variable and satisfies \eqref{eq:Hencky-growth} and \eqref{eq:asymptotic-convexity}. Let 
\[ \F^{(\delta)}_{\eps}(u) := \left\{ \begin{array}{ll}
\int_{ \Omega } f^{(\delta)}( \tfrac{x}{ \eps } , \E u(x) ) \x, & u \in W^{1,2}( \Omega ; \R^{n} ), \\
\i, & {\rm else}.
\end{array} \right. \]
and 
\[ \F^{(\delta)}_{\hom}(u) := \left\{ \begin{array}{ll}
\int_{ \Omega } f^{(\delta)}_{ \hom }( \E u(x) ) \x, & u \in W^{1,2}( \Omega ; \R^{n} ), \\
\i, & {\rm else},
\end{array} \right. \]
Then the following diagrams, in which horizontally $\delta \to 0$ and vertically $\eps \to 0$, commute:
\[
\begindc{\commdiag}[500]
\obj(0,2)[aa]{$ \F^{ (\delta) }_{\eps} $}
\obj(2,2)[bb]{$ \F^{ (0) }_{\eps} $}
\obj(0,0)[cc]{$ \F^{ (\delta) }_{\hom} $}
\obj(2,0)[dd]{$ \F_{\hom} $}
\mor{aa}{bb}{ptw. falling}
\mor{aa}{cc}{$ \Gamma $}
\mor{bb}{dd}{$ \Gamma $}
\mor{cc}{dd}{$ \Gamma $}
\enddc
\qquad \mbox{and} \qquad
\begindc{\commdiag}[500]
\obj(0,2)[aa]{$ \F^{ (\delta) }_{\eps} $}
\obj(2,2)[bb]{$ \lsc \F^{ (0) }_{\eps} = \lsc \F_{\eps}$}
\obj(0,0)[cc]{$ \F^{ (\delta) }_{\hom} $}
\obj(2,0)[dd]{$ \F_{\hom} $}
\mor{aa}{bb}{$ \Gamma $}
\mor{aa}{cc}{$ \Gamma $}
\mor{bb}{dd}{$ \Gamma $}
\mor{cc}{dd}{$ \Gamma $}
\enddc
\]
(All $ \Gamma $-limits are with respect to the $ L^{1} $-norm.) Here $\lsc$ denotes the $L^1$-lower semicontinuous envelope. 
\end{theo}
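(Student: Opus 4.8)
The plan is to establish the commutativity of each of the eight $\Gamma$-limits in the two diagrams separately, and then glue them together. Three of the four arrows in the first diagram are essentially already available: the top horizontal arrow $\F^{(\delta)}_\eps \to \F^{(0)}_\eps = \F_\eps$ (pointwise falling as $\delta \to 0$) is immediate from the definition of $f^{(\delta)}$; the right vertical arrow $\F_\eps \xrightarrow{\Gamma} \F_\hom$ is precisely Theorem~\ref{theo:Hencky-epilog}; and the left vertical arrow $\F^{(\delta)}_\eps \xrightarrow{\Gamma} \F^{(\delta)}_\hom$ is the classical homogenization theorem of Braides and M\"uller for integrands with standard $p$-growth (here $p=2$), since $f^{(\delta)}$ satisfies a $2$-growth bound. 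So the real content of the first diagram is the bottom arrow $\F^{(\delta)}_\hom \xrightarrow{\Gamma} \F_\hom$ and the consistency of the two ways of traversing it.

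For the bottom arrow I would first analyze the convergence of the homogenized densities $f^{(\delta)}_\hom \to$ (the density of) $\F_\hom$ as $\delta \to 0$. From the definition \eqref{eq:homogenization-formula} and $f \le f^{(\delta)} \le f + \delta|X_\dev|^2$, one gets monotone convergence $f^{(\delta)}_\hom(X) \downarrow \inf_\delta f^{(\delta)}_\hom(X)$ pointwise; the key lemma will be that this infimum equals $f_\hom(X)$ and, moreover, that the convergence is locally uniform, so that the asymptotic (recession) functions also converge, $(f^{(\delta)}_\hom)^\# \to (f_\hom)^\#$. Here one uses that $f_\hom$ inherits the Hencky growth \eqref{eq:Hencky-growth} and the asymptotic convexity \eqref{eq:asymptotic-convexity} (with the same $\eta$, adjusted constants) — these structural facts should have been established in the course of proving Theorem~\ref{theo:Hencky-epilog}. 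Given locally uniform convergence of the densities together with uniform growth bounds, the $\Gamma(L^1)$-convergence $\F^{(\delta)}_\hom \to \F_\hom$ follows: the $\limsup$ inequality by a diagonal/recovery-sequence argument using density of smooth (or $LU$) functions in $U(\Omega)$ in the sense of strict convergence, and the $\liminf$ inequality from lower semicontinuity of $\F_\hom$ (which, being of the relaxed Hencky type, is $L^1$-lower semicontinuous under the asymptotic convexity hypothesis) combined with $\F^{(\delta)}_\hom \ge \F_\hom$ restricted to $W^{1,2}$. The commutativity of the first square then reduces to the uniqueness of $\Gamma$-limits once one checks that the diagonal $\Gamma$-limit (e.g.\ sending $\eps \to 0$ and $\delta \to 0$ simultaneously along a suitable sequence $\delta(\eps)$) agrees with both iterated limits; this is where I expect the work to concentrate.

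The second diagram differs only in that the top horizontal arrow is replaced by $\F^{(\delta)}_\eps \xrightarrow{\Gamma} \lsc\F_\eps$. The new ingredient here is the identification $\lsc\F^{(0)}_\eps = \lsc\F_\eps$ together with its $\Gamma$-behavior: since $\Gamma$-limits are automatically lower semicontinuous, $\Gamma\text{-}\lim_{\delta\to 0}\F^{(\delta)}_\eps$ for fixed $\eps$ equals the relaxation of $\inf_\delta \F^{(\delta)}_\eps = \F_\eps$, i.e.\ $\lsc\F_\eps$ — this follows from the general fact that the $\Gamma$-limit of a decreasing sequence of functionals is the lsc envelope of the pointwise limit. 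For the right vertical arrow $\lsc\F_\eps \xrightarrow{\Gamma} \F_\hom$ one notes that replacing a functional by its lsc envelope does not change its $\Gamma$-limit, so this is again Theorem~\ref{theo:Hencky-epilog}. The remaining two arrows (left vertical and bottom) are shared with the first diagram. Thus the second diagram commutes by the same diagonalization argument. The main obstacle throughout is the bottom arrow, and within it the passage $f^{(\delta)}_\hom \to f_\hom$: one must rule out that the extra term $\delta|X_\dev|^2$, although vanishing pointwise in $\delta$, fails to vanish after the homogenization infimum over all scales $k$ and all correctors $\varphi$ — this is handled by using near-optimal correctors for $f_\hom$ at scale $k$, truncating them so that $\delta \int |\E\varphi|^2$ is controlled, and exploiting the asymptotic convexity to absorb the resulting error, uniformly in $X$ on bounded sets and with the correct recession behavior for the singular-measure term.
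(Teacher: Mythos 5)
Your skeleton matches the paper's: the top and left arrows are the easy ones (pointwise monotone convergence in $\delta$; the standard $2$-growth homogenization for fixed $\delta>0$; and, for the second diagram, the general fact that the $\Gamma$-limit of a decreasing family is the lsc envelope of the pointwise limit), the right arrows are Theorem~\ref{theo:Hencky-epilog} together with the fact that relaxation does not change a $\Gamma$-limit, and the only genuinely new content is the bottom arrow $\F^{(\delta)}_{\hom}\to\F_{\hom}$. Two of your steps for that arrow, however, would not survive scrutiny. First, the claimed convergence $(f^{(\delta)}_{\hom})^{\#}\to(f_{\hom})^{\#}$ is false: for $\delta>0$ the hardening term forces $f^{(\delta)}(x,tP)/t\ge\delta t|P_{\dev}|^{2}\to\infty$, so $(f^{(\delta)}_{\hom})^{\#}\equiv+\infty$ on $\M{n}{n}_{\dev}\setminus\{0\}$. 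Fortunately this convergence is also unnecessary, because $\F^{(\delta)}_{\hom}$ is finite only on $W^{1,2}(\Omega;\R^n)$, where there is no singular part; the recession function enters only in the limit functional. Likewise your concern about truncating correctors to control $\delta\int|\E\varphi|^{2}$ is moot: in the homogenization formula the test functions are $C^{\infty}_{c}$, so for each fixed $k$ and $\varphi$ the extra term vanishes as $\delta\to 0$, and exchanging the three infima gives $\inf_{\delta}f^{(\delta)}_{\hom}=f_{\hom}$ pointwise with no further work --- this is exactly what the paper does. Second, no diagonal limit $\delta(\eps)$ needs to be analyzed: commutativity of the diagrams means only that both iterated paths end at the same object $\F_{\hom}$, which is automatic once the four arrows are established, so the step where you "expect the work to concentrate" is vacuous.

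The paper's route for the bottom arrow is the clean version of what you sketch: by monotonicity, $\Gamma(L^1)\mbox{-}\lim_{\delta\to0}\F^{(\delta)}_{\hom}=\lsc\G$, where $\G$ is the functional with density $f_{\hom}$ on $LU(\Omega)$; then $\lsc\G\le\F_{\hom}$ via the $\langle\cdot\rangle$-strict density of smooth functions and Proposition~\ref{prop:c-strict-continuity} (your recovery-sequence step), while $\lsc\G\ge\Gamma(L^1)\mbox{-}\limsup_{\eps\to0}\F_{\eps}=\F_{\hom}$ by comparison $\F^{(\delta)}_{\eps}\ge\F_{\eps}$ and Theorem~\ref{theo:Hencky-epilog} (your liminf step, with the lower semicontinuity of $\F_{\hom}$ supplied by its being a $\Gamma$-limit rather than proved separately). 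So your proposal is correct in substance and follows the paper's decomposition, once the two detours above are removed.
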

\item
This result shows that in fact the influence of highly oscillating material parameters and small hardening decouple. In this sense, Theorem \ref{theo:Hencky-commutability} provides an extension of our general commutability result \cite{JesenkoSchmidt} to a -- yet specific -- situation with mixed growth conditions. 
\item
We remark that the treatment of non-convex functionals requires a different approach as compared to \cite{DemengelQi}, where extensive use is made of the fact that a convex energy function can directly be applied to the measure representing the symmetrized derivative of a $BD$ function. Instead, our strategy to prove the $\limsup$-inequality in Theorem \ref{theo:Hencky-epilog} rests upon the density of smooth functions and continuity of $\F_{\hom}$ in a suitable intermediate topology on $U(\Omega)$. To this end, we borrow ideas from Kristensen and Rindler \cite{KristensenRindler-Relax} for functionals with purely linear growth and prove the following continuity result which might be of some independent interest. (See Definitions~\ref{defi:strict-conv} and \ref{defi:sym-rk-one} for the notions of $ \langle \cdot \rangle $-strict continuity and symmetric-rank-one-convexity.) 
\begin{prop}
\label{prop:c-strict-continuity}
Let $ f : \Omega \times \M{n}{n}_{\sm} \to \R $ be a continuous function 
that is symmetric-rank-one-convex in the second variable
and that satisfies the Hencky plasticity growth condition \eqref{eq:Hencky-growth}. Denote $ f_{\dev} := f|_{ \Omega \times \M{n}{n}_{\dev} } $.
Suppose that
\[ ( f_{\dev} )^{\i}( x_{0} , P_{0} ) = \limsup_{ P \to P_{0} , t \to \i } \frac{ f_{\dev}( x_{0} , t P ) }{t} \]
is for every fixed $ P_{0} \in \M{n}{n}_{\dev} $ a continuous function of $ x_{0} $. Then the functional
\[ \F(u) = \int_{\Omega} f \big( x , \E u(x) \big) \x + \int_{\Omega} ( f_{\dev} )^{\i} \big( x , \tfrac{ d E^{s} u }{ d | E^{s} u |}(x) \big) \ d | E^{s} u |(x) \]
is $ \langle \cdot \rangle $-strictly continuous on $ U( \Omega ) $.
\end{prop}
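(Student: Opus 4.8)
The plan is to establish continuity of $\F$ along $\langle\cdot\rangle$-strictly convergent sequences $u_j \to u$ in $U(\Omega)$ by first reducing to the deviatoric component and then mimicking the strategy of Kristensen--Rindler for linear-growth functionals, exploiting that the trace parts $\DIV u_j \to \DIV u$ in $L^2$ so that the quadratic-in-trace contribution is continuous by standard arguments. The key structural observation is that under the $\langle\cdot\rangle$-strict topology the symmetrized strains $E u_j$ converge strictly (in the $BD$ sense) to $Eu$ as measures on the deviatoric part, while the divergences converge in $L^2$; one should therefore split $f(x,X) = f(x, X_{\dev} + \tfrac{\tr X}{n}I)$ and handle the two growth regimes separately, using the Hencky growth bound \eqref{eq:Hencky-growth} to control cross terms.

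First I would record the lower semicontinuity direction: since $f$ is symmetric-rank-one-convex and $E u_j \weakstar Eu$, a standard lower semicontinuity theorem for symmetric-quasiconvex (hence, for this growth, it suffices to use rank-one convexity together with the measure representation via $(f_{\dev})^\i$ at singular points) integrands on $BD$ gives $\liminf_j \F(u_j) \ge \F(u)$. Next, for the upper bound I would use that $\langle\cdot\rangle$-strict convergence provides convergence of the total masses $\langle E u_j\rangle(\Omega) \to \langle Eu\rangle(\Omega)$; combined with weak-$*$ convergence this lets me apply a Reshetnyak-type continuity theorem. The point is that the integrand $(x,A) \mapsto f(x, A)$ evaluated on the polar decomposition of $Eu$ — with $f$ replaced by its recession $(f_{\dev})^\i$ in the directions where $|A|$ is large — is, by the hypotheses (continuity in $x$ of both $f$ and $(f_{\dev})^\i$, plus the growth bound making $f(x,\cdot)$ have a well-defined recession behaviour that matches $(f_{\dev})^\i$ on $\M{n}{n}_{\dev}$), a valid Reshetnyak integrand on the sphere bundle, so Reshetnyak's continuity theorem yields $\F(u_j) \to \F(u)$.

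The main technical obstacle, and where most of the work goes, is verifying that the "augmented" integrand built from $f$ on the absolutely continuous part and $(f_{\dev})^\i$ on the singular part is genuinely continuous and has linear growth in the right joint variable so that Reshetnyak applies. Concretely: (i) one must show $f(x,\cdot)$ has recession function in the deviatoric directions equal to $(f_{\dev})^\i(x,\cdot)$ uniformly enough — this is where the upper Hencky bound $f(x,X) \le \beta(|X_{\dev}| + (\tr X)^2 + 1)$ and the lower bound together pin down the linear-growth behaviour in $X_{\dev}$ while the quadratic trace term is absorbed because on a $\langle\cdot\rangle$-strictly convergent sequence the trace parts are uniformly $L^2$-bounded and converge strongly; (ii) one must handle the interplay of the two scalings (linear versus quadratic) by decomposing the strain measure into its trace part (a function, since $\DIV u \in L^2$) and its deviatoric part (a measure), so that the functional is literally a sum $\int_\Omega h(x,\DIV u)\,dx + \big[\text{linear-growth functional in } (Eu)_{\dev}\big]$ up to a jointly continuous correction controlled by \eqref{eq:Hencky-growth}, and then the first summand is $L^2$-continuous and the second is handled by the Kristensen--Rindler-type argument; (iii) the hypothesis that $x_0 \mapsto (f_{\dev})^\i(x_0,P_0)$ is continuous for each fixed $P_0$ is exactly what is needed to upgrade this to joint continuity of the recession integrand, via the uniform-in-$x$ Lipschitz bound on $(f_{\dev})^\i(x,\cdot)$ coming from the growth condition. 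Once these points are in place, the conclusion follows by assembling lower semicontinuity and the Reshetnyak upper bound.
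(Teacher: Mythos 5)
Your high-level picture (use the $\langle\cdot\rangle$-strict convergence to feed a Reshetnyak-type continuity theorem, treating the deviatoric part as a linear-growth problem and the trace part via the $L^{2}$ convergence of the divergences) matches the paper's, but two essential mechanisms are missing, and one step as stated is wrong. First, your lower bound is asserted to follow from ``a standard lower semicontinuity theorem'' for symmetric-rank-one-convex integrands under weak-$*$ convergence of $Eu_{j}$. No such theorem is available: rank-one convexity does not yield weak-$*$ lower semicontinuity even in $BV$, and the paper never uses one. Instead it sandwiches $\F$ between a functional built with the \emph{liminf}-recession $h_{f_{\dev}}$ (shown l.s.c.) and one built with the \emph{limsup}-recession $-h_{-f_{\dev}}$ (shown u.s.c.), both via the Alibert--Bouchitt\'e approximation by integrands in $\mathbf{E}(\Omega;\M{n}{n}_{\sm})$ and Reshetnyak's theorem. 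These two recession functions need not coincide at a general deviatoric matrix; they coincide at \emph{symmetric rank-one} matrices because $f$ is convex along such directions (Dini's lemma plus your continuity hypothesis on $x_{0}\mapsto (f_{\dev})^{\i}(x_{0},P_{0})$). To conclude that both sandwiching functionals equal $\F$ one must therefore invoke the De Philippis--Rindler rank-one theorem for $BD$ (Theorem~\ref{theo:BD-Alberti}), which guarantees that $\tfrac{dE^{s}u}{d|E^{s}u|}$ is $|E^{s}u|$-a.e.\ symmetric rank-one. Your proposal never uses this, and without it the identification of the recession integrand on the singular part fails.

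Second, your reduction to linear growth rests on writing the functional ``literally as a sum'' of an $L^{2}$-continuous trace term and a linear-growth deviatoric term ``up to a jointly continuous correction.'' Such a splitting of $f(x,X)$ into $h(x,\tr X)+g(x,X_{\dev})$ does not exist in general, and the growth condition \eqref{eq:Hencky-growth} alone does not produce it. The working device is a truncation: introduce the cones $C_{M,K}=\{|X_{\dev}|\ge K((\tr X)^{2}-M^{2})\}$ and replace $f$ by a linear-growth minorant $\hat f_{M,K}$ (cut off outside $C_{M+1,K}$) and a majorant $\check f_{M,K}$ (penalized outside), controlling the discarded region by the equiintegrability of $\{(\DIV u_{j})^{2}\}$ along the strictly convergent sequence (error $O(K^{-2})$) and the mismatch of the recession functions by a quantitative Lipschitz estimate in the trace direction, $|f(P+\tfrac{\rho}{n}I)-f(P)|\lesssim \beta n\sqrt{\varkappa}(\sqrt{|P|}+M)|\rho|$, itself a consequence of the Ball--Kirchheim--Kristensen bound on Lipschitz constants of separately convex functions (error $O(K^{-1/2})|E^{s}u|(\Omega)$). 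You correctly identify the ingredients ($L^{2}$ convergence of divergences, the Hencky bounds) but not the construction that turns them into a proof; as written, steps (i)--(ii) of your outline would not go through.
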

\item 
The $\liminf$-inequality at regular points is obtained by a localization and slicing method. However, due to the non-locality of $U(\Omega)$, additional terms have to be introduced with the help of the Bogovskii-operator in order to achieve quadratic integrability of the divergence. In analyzing the singular points (as well as in the $\limsup$-inequality), we use a novel rank-1-theorem for $BD$-functions, only recently proved by De Philippis und Rindler in \cite{DePhilippisRindler}. While we conjecture our result to hold true in more generality, at this point we make use of the aforementioned asymptotic convexity. At regular points we use that $BD$-functions are $L^q$-differentiable a.e.\ for some $q > 1$. While $L^1$-differentiability had been established already in \cite{AmbrosioCosciaDalMaso}, for $ 1 < q < \frac{n}{n-1} $ this was only recently obtained in \cite{ABC}. We include an alternative proof which also covers the case $ q = \frac{n}{n-1} $, thus answering positively a question raised in \cite{ABC}, see Corollary \ref{cor:BD-approx-diff-L1*}. 
\item
The paper is organized as follows. In Section \ref{section:Preliminaries} we collect some basic material on the function spaces $BD$, $LD$, $U$ and $LU$. We also prove the higher order approximate differentiability for functions of bounded deformation and a Helmholtz decomposition result on $U$. The following Section~\ref{section:Hencky-commutability} contains the proofs of our main results. We begin in Section~\ref{subsection:Hencky-setting} by establishing the straightforward limits $\delta \to 0$ while $\eps > 0$ and $\eps \to 0$ while $\delta > 0$ in Theorem~\ref{theo:Hencky-commutability} and also analyze basic properties of the homogenized density $f_{\hom}$. Section~\ref{section:Hencky-limsup} then gives the proof of Proposition~\ref{prop:c-strict-continuity}, from which also the proof of the $\limsup$-inequality in Theorem~\ref{theo:Hencky-epilog} is readily deduced. In Section~\ref{section:Hencky-liminf} we then show that also the $\liminf$-inequality is satisfied and thus complete the proofs of Theorems~\ref{theo:Hencky-epilog} and \ref{theo:Hencky-commutability}. In the last Section~\ref{section:Hencky-relax-KK} we discuss a complemetary relaxation result in the location independent case. 
\end{trivlist}
%
\section{Preliminaries}\label{section:Preliminaries}

For convenience of the reader, we first briefly review the definition and basic properties of functions of bounded deformation. 
We then establish a higher-order approximate differentiability result for these functions. 
Finally we discuss the spaces $LU$ and $U$ in some detail including a Helmholtz decomposition result on $U$.  

\subsection{Functions of bounded deformation}

\begin{trivlist}
\item
We may define $ \E u $ via distribution or directly as the only (if existing) function that fulfils
\[ \int_{\Omega} \E u(x) \cdot \Phi(x) \x 
= - \int_{\Omega} u(x) \cdot \DIV( \Phi(x)_{\sm} ) \x \]
for all $ \Phi \in C_{c}^{\i}( \Omega ; \M{n}{n} ) $. We will denote the corresponding space by
\[ LD( \Omega ) := \{ u \in L^{1}( \Omega ; \R^{n} ) : \E u \in L^{1}( \Omega ; \M{n}{n} ) \}. \]
It becomes a Banach space when equipped with the natural norm
\[ \| u \|_{ LD( \Omega ) } := \| u \|_{ L^{1}( \Omega ; \R^{n} ) } + \| \E u \|_{ L^{1}( \Omega ; \M{n}{n} ) }. \]
For the properties of this space, we refer to, e.g., \cite[Section II.1]{Temam}. 
Analogously to the space of functions of bounded variation for the full-gradient case, 
one introduces the space of functions of bounded deformation as follows:
If the mapping
\[ C_{c}^{\i}( \Omega ; \M{n}{n} ) \to \R,
\quad
\Phi \mapsto - \int_{\Omega} u(x) \cdot \DIV( \Phi(x)_{\sm} ) \x, \]
may be extended to a bounded linear functional on $ C_{0}( \Omega ; \M{n}{n} ) $, i.e.~to a Radon measure,
then we denote this functional by $ E u \in M( \Omega ; \M{n}{n} ) $. 
(Clearly, it must lie in $ M( \Omega ; \M{n}{n}_{\sm} ) $.)
The space of functions of bounded deformation is defined by
\[ BD( \Omega ) := \{ u \in L^{1}( \Omega ; \R^{n} ) : E u \in M( \Omega ; \M{n}{n} ) \}. \]
Equipped with the norm
\[ \| u \|_{ BD( \Omega ) } := \| u \|_{ L^{1}( \Omega ; \R^{n} ) } + \| E u \|_{ M( \Omega ; \M{n}{n} ) }, \]
it is also a Banach space.
\item
As in the space of functions of bounded variation, we introduce (with abuse of terminology) the weak convergence:
A sequence $ \{ u_{j} \}_{ j \in \N } $ {\it converges weakly in $ BD( \Omega ) $} to $ u $,
denoted $ u_{j} \weakly u $,
if
\[ u_{j} \to u \quad \mbox{in } L^{1}( \Omega ; \R^{n} )
\quad \mbox{and} \quad
E u_{j} \weakstar E u \quad \mbox{in } M( \Omega ; \M{n}{n} ). \]
Let us mention that every bounded sequence in $ BD( \Omega ) $ contains a weakly convergent subsequence.

\item
If additionally $ | E u_{j} |( \Omega ) \to | E u |( \Omega ) $, then we speak of {\it strict} or {\it intermediate convergence}.
This topology is actually induced by the metric
\[ d(u,v) := \| u - v \|_{ L^{1} } + \big| | Eu |( \Omega ) - | Ev |( \Omega ) \big|. \]
\item
Finally, let $ c : \M{n}{n}_{\sm} \to [ 0 , \i ) $ be a convex function with linear upper bound.
A sequence $ \{ u_{j} \}_{ j \in \N } $ {\it converges $ c $-strictly in $ BD( \Omega ) $} to $ u $,
symbolically $ u_{j} \gre{c} u $, if 
\begin{itemize}
\item
$ u_{j} \to u $ in $ L^{1}( \Omega ; \R^{n} ) $,
\item
$ | E u_{j} |( \Omega ) \to | E u |( \Omega ) $,
\item
$ \int_{ \Omega } c( E u_{j} ) \to \int_{ \Omega } c( E u ) $.
\end{itemize}
For a general discussion about a convex linearly bounded function of a measure
including the density results for $c$-strict topology, see \cite[Section~II.4]{Temam} or \cite{DemengelTemam:84}.
Let us just mention that every $ \mu \in M( \Omega ; \M{n}{n}_{\sm} ) $ has the Lebesgue decomposition 
\[ \mu = \mu^{a} + \mu^{s} \]
into the absolutely continuous and the singular part with respect to the Lebesgue measure $ \L^{n} $.
Then
\[ c( \mu ) := c \Big( \frac{ d \mu^{a} }{ d \L^{n} } \Big) \L^{n} + c^{\i} \Big( \frac{ d \mu^{s} }{ d | \mu^{s} | } \Big) | \mu^{s} |. \]
Moreover, the denotation $ \int_{A} c( \mu ) $ used above is merely another way for writing $ c( \mu )(A) $.
\item
For $ u \in BD( \Omega ) $ we decompose $ Eu $ with respect to the Lebesgue measure $ \L^{n} $ into
\[ Eu =: \E u \ \L^{n} + E^{s} u. \]
By denoting the density of the absolutely continuous part by $ \E u $, 
we have just extended the definition from $ LD( \Omega ) $. Clearly,
\[ LD( \Omega ) = \{ u \in BD( \Omega ) : E^{s} u = 0 \}. \]
\item
Although being larger than the space of functions of bounded variation, 
some properties still hold also for the space of functions of bounded deformation.
E.g., it is possible to define boundary values, 
as was shown in \cite[Theorem II.2.1]{Temam} for domains with $ C^{1} $-boundary 
and extended recently by Babadjian, see \cite{Babadjian}: 
\begin{theo}
Let $ \Omega \subset \R^{n} $ be a bounded Lipschitz domain. 
There exists a unique linear continuous mapping $ \gamma : BD( \Omega ) \to L^{1}( \partial \Omega ; \R^{n} ) $
such that the following integration by parts formula holds: 
for every $ u \in BD( \Omega ) $ and $ \varphi \in C^{1}( \R^{n} ) $
\[ \int_{ \Omega } u(x) \odot \nabla \varphi(x) \x
+ \int_{ \Omega } \varphi(x) \ d Eu(x)
= \int_{ \partial \Omega } \varphi(x) \ \gamma(u)(x) \odot \nu(x) \ d {\mathscr H}^{n-1}(x). \]
For all $ u \in BD( \Omega ) \cap C( \overline{ \Omega } ; \R^{n} ) $, it holds
$ \gamma(u) = u|_{ \partial \Omega }  $.
In point of fact, $ \gamma $ is even continuous if $ BD( \Omega ) $ is endowed with the strict topology.
\end{theo}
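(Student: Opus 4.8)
The plan is to first establish the result for $C^1$ domains, essentially following Temam \cite[Theorem II.2.1]{Temam}, and then to upgrade to Lipschitz domains by a localization-and-charts argument as in Babadjian \cite{Babadjian}. For the $C^1$ case I would proceed as follows. First, recall that $C^\infty(\overline\Omega;\R^n)$ is dense in $BD(\Omega)$ for the strict (intermediate) topology; this is the key approximation tool. For $u \in C^\infty(\overline\Omega;\R^n)$ and $\varphi \in C^1(\R^n)$ the integration by parts formula
\[ \int_{\Omega} u(x) \odot \nabla\varphi(x)\x + \int_{\Omega} \varphi(x)\, Eu(x)\x = \int_{\partial\Omega} \varphi(x)\, u(x)\odot\nu(x)\ d{\mathscr H}^{n-1}(x) \]
is just the classical divergence theorem applied componentwise (writing $\odot$ for the symmetrized tensor product and using $E u = \frac12(Du + Du^T)$). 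The task is then to show that the right-hand side, as a linear functional in $u$, is continuous with respect to the strict topology with values in (some trace space inside) $L^1(\partial\Omega;\R^n)$, so that it extends uniquely to all of $BD(\Omega)$; the extension is by definition $\gamma(u)$, and the formula passes to the limit because each term on the left is strictly continuous (the first by $L^1$-convergence, the second because $\varphi$ is continuous and bounded and $Eu_j \weakstar Eu$ together with $|Eu_j|(\Omega)\to|Eu|(\Omega)$ gives convergence of $\int \varphi\, dEu_j$).

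The heart of the matter is the a priori trace estimate $\|\gamma(u)\|_{L^1(\partial\Omega)} \le C\|u\|_{BD(\Omega)}$ for smooth $u$, with $C$ depending only on $\Omega$. Here I would use that, near the boundary, one may flatten $\partial\Omega$ and estimate the trace of $u$ by its values on parallel interior hypersurfaces: writing, in boundary-adapted coordinates, $u$ restricted to the level set $\{x_n = t\}$ and using the fundamental theorem of calculus in the normal direction, one controls $\int_{\partial\Omega}|u|\, d{\mathscr H}^{n-1}$ by $\dashint_0^h \int_{\{x_n=t\}}|u| + \int_{\text{collar}} |\partial_n u|$. The subtle point specific to $BD$ (as opposed to $W^{1,1}$) is that only the symmetrized gradient is controlled, so the normal derivative $\partial_n u$ is not directly available; one circumvents this exactly as in Temam by a rigid-motion / Korn-type argument on thin collars, or by noting that the relevant combination of components of $\partial_n u$ that enters the trace can be rewritten in terms of components of $Eu$ plus tangential derivatives of lower-order components, which are then integrated by parts along the boundary surfaces. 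This gives the bound with the full $BD$-norm. For $u \in BD(\Omega)\cap C(\overline\Omega;\R^n)$ the identity $\gamma(u) = u|_{\partial\Omega}$ follows because such $u$ can be strictly approximated by smooth functions converging uniformly on $\overline\Omega$, so the traces converge in $L^1(\partial\Omega)$ to $u|_{\partial\Omega}$.

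The uniqueness of $\gamma$ is immediate from the integration by parts formula: if $\gamma_1,\gamma_2$ both satisfy it, then $\int_{\partial\Omega}\varphi\,(\gamma_1(u)-\gamma_2(u))\odot\nu\, d{\mathscr H}^{n-1} = 0$ for all $\varphi \in C^1(\R^n)$, and since $\nu \ne 0$ ${\mathscr H}^{n-1}$-a.e.\ and test functions $\varphi$ separate points on $\partial\Omega$, one gets $\gamma_1(u) = \gamma_2(u)$ ${\mathscr H}^{n-1}$-a.e. The strict continuity of $\gamma$ is then a restatement of the construction: $\gamma$ is built as the unique strictly continuous extension of the map $u \mapsto u|_{\partial\Omega}$ on smooth functions. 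Finally, to pass from $C^1$ to Lipschitz domains I would cover $\partial\Omega$ by finitely many charts in which $\partial\Omega$ is a Lipschitz graph, use a partition of unity, and in each chart transfer everything to the model half-space $\{x_n > \psi(x')\}$; the collar estimates above survive because Lipschitz graphs still admit a well-defined inner normal ${\mathscr H}^{n-1}$-a.e.\ (Rademacher) and the bi-Lipschitz change of variables distorts the $BD$-norm and surface measure only by bounded factors. I expect this last globalization step, and in particular checking that the local trace operators agree on overlaps so that they glue to a well-defined global $\gamma$, to be the main technical obstacle — but it is routine given the $C^1$ result, which is why the paper simply cites \cite{Temam} and \cite{Babadjian} rather than reproving it.
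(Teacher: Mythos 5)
The paper offers no proof of this statement: it is quoted from the literature (Temam, Theorem II.2.1, for $C^{1}$ domains, and Babadjian for the Lipschitz case), so your proposal can only be measured against that standard argument. Your outline of the $C^{1}$ case is essentially the right one, and you correctly isolate the $BD$-specific difficulty: only $Eu$ is controlled, so the normal derivative of a tangential component must be recovered via $\partial_{n} u_{i} = 2 (Eu)_{in} - \partial_{i} u_{n}$, the tangential derivative $\partial_{i} u_{n}$ being disposed of by integration by parts along the parallel interior slices. The localization to Lipschitz charts is likewise the route taken in Babadjian.

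There is, however, one genuine gap in the logical structure of your extension step. Smooth functions are dense in $BD(\Omega)$ only for the strict topology, not the norm topology, and the a priori bound $\| \gamma(u) \|_{ L^{1}( \partial \Omega ) } \le C \| u \|_{ BD( \Omega ) }$ that you present as ``the heart of the matter'' does not suffice to extend $\gamma$ by density: applied to $u_{j} - u_{k}$ it would require $| E( u_{j} - u_{k} ) |( \Omega ) \to 0$, which strict convergence does not give (mollifications of a $u$ with $E^{s} u \ne 0$ converge strictly, yet $| E( u_{\eps} - u ) |( \Omega ) \ge | E^{s} u |( \Omega ) > 0$). Your closing remark that the strict continuity ``is a restatement of the construction'' is therefore circular. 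What is actually needed, and what the cited proofs establish, is the localized form of the trace estimate on a collar $\Sigma_{\delta}$ of width $\delta$,
\[ \int_{ \partial \Omega } | \gamma(u) | \ d {\mathscr H}^{n-1} \le C_{\delta} \, \| u \|_{ L^{1}( \Sigma_{\delta} ) } + C \, | Eu |( \Sigma_{\delta} ), \]
with $C$ independent of $\delta$ (only $C_{\delta}$ may blow up). Combined with the upper semicontinuity $\limsup_{j} | E u_{j} |( \overline{ \Sigma_{\delta} } \cap \Omega ) \le | E u |( \overline{ \Sigma_{\delta} } \cap \Omega )$, valid when $E u_{j} \weakstar E u$ and $| E u_{j} |( \Omega ) \to | E u |( \Omega )$, and with $| E u |( \overline{ \Sigma_{\delta} } \cap \Omega ) \to 0$ as $\delta \to 0$, this yields both the well-definedness of the extension and its strict continuity in one stroke. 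Alternatively, one defines $\gamma(u)$ for each fixed $u \in BD( \Omega )$ directly as the $L^{1}$-limit of its restrictions to interior parallel surfaces, with no density argument at all; either way, the collar estimate with a $\delta$-uniform constant in front of the measure term is the missing ingredient in your sketch.
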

\item 
Here $ \nu $ denotes the outer unit normal vector to $ \partial \Omega $ 
and $ {\mathscr H}^{n-1} $ the $ (n-1) $-dimensional Hausdorff measure. Henceforth, we will for $ a,b \in \R^{n} $ write
\[ a \odot b := \tfrac{1}{2} ( a \otimes b + b \otimes a ). \]
Theorems II.2.2 and II.2.4 in \cite{Temam} cover the subject of embeddings in $ L^{p} $-spaces, see also \cite{AnzellottiGiaquinta:80}:
\begin{theo}
\label{theo:BD-embeddings}
Let $ \Omega \subset \R^{n} $ be a bounded Lipschitz domain. 
There exists a natural bounded embedding
\[ BD( \Omega ) \hookrightarrow L^{ \frac{n}{n-1} }( \Omega ; \R^{n} ). \]
For every $ 1 \le q < \frac{n}{n-1} $ the embedding
\[ BD( \Omega ) \hookrightarrow L^{q}( \Omega ; \R^{n} ) \]
is even compact.
\end{theo}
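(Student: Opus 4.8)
The plan is to reduce the whole statement to a single Sobolev-type inequality on $\R^{n}$, and then read off both conclusions from it. Since $\Omega$ is a bounded Lipschitz domain there is a bounded extension operator $BD(\Omega)\to BD(\R^{n})$ whose range consists of functions supported in one fixed ball $B_{R}\supset\overline{\Omega}$; composing it with mollification and using that $v\mapsto\|v\|_{L^{n/(n-1)}}$ is lower semicontinuous under $L^{1}$-convergence (Fatou), it suffices to prove
\[
 \|u\|_{L^{n/(n-1)}(\R^{n})}\le c_{n}\,\|\E u\|_{L^{1}(\R^{n})}
 \qquad\text{for all }u\in C_{c}^{\infty}(\R^{n};\R^{n}),
\]
with a dimensional constant $c_{n}$. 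The extra $L^{1}(\Omega)$-term in the asserted embedding then appears only because the extension $\bar u$ of $u\in BD(\Omega)$ satisfies $\|\E\bar u\|_{L^{1}(\R^{n})}\le C(\|u\|_{L^{1}(\Omega)}+|Eu|(\Omega))$.

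To prove this inequality I would start from the elementary pointwise identity $\partial_{i}\partial_{j}u_{k}=\partial_{i}(\E u)_{jk}+\partial_{j}(\E u)_{ik}-\partial_{k}(\E u)_{ij}$. Contracting $i=j$ gives $\Delta u_{k}=2\sum_{i}\partial_{i}(\E u)_{ik}-\partial_{k}\tr(\E u)$, so convolution with the fundamental solution of $-\Delta$ yields a representation $u=K\ast\E u$ in which $K$ is a matrix-valued kernel, smooth away from the origin and positively homogeneous of degree $1-n$ (its entries are of the form $x\mapsto c\,x/|x|^{n}$); in particular $K\in L^{1}_{\rm loc}(\R^{n})\cap L^{n/(n-1),\infty}(\R^{n})$. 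Young's inequality in Lorentz spaces then already gives the weak bound $\|u\|_{L^{n/(n-1),\infty}}\le c_{n}\|\E u\|_{L^{1}}$. The hard part will be to upgrade this to the \emph{strong} $L^{n/(n-1)}$-norm: this is a genuine endpoint estimate that does not follow from the naive potential bound, and I would obtain it either via the refinement of Strang and Temam or, more conceptually, via the cancellation (div--curl type) estimate available for the elliptic and \emph{cancelling} operator $\E$ in the spirit of Bourgain--Brezis and Van Schaftingen. This is where the entire difficulty of the theorem is concentrated, since by Ornstein's non-inequality one cannot bound $\|\nabla u\|_{L^{1}}$ by $\|\E u\|_{L^{1}}$: the gain of integrability genuinely uses the algebraic structure of $\E$ and not merely its ellipticity.

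The compact embedding into $L^{q}(\Omega)$ for $1\le q<n/(n-1)$ I expect to be comparatively soft and to need only the weak form of the inequality. Given a bounded sequence $(u_{j})$ in $BD(\Omega)$, I would extend it to a sequence in $BD(\R^{n})$ that is bounded and supported in a fixed ball $B_{R}$. From the representation $u_{j}=K\ast Eu_{j}$ (valid first for the mollified approximants and then in the limit) and the local integrability of $K$ one gets, with $\omega(r):=\sup_{|h|\le r}\|K(\cdot+h)-K\|_{L^{1}(B_{3R})}$, the uniform estimate $\|u_{j}(\cdot+h)-u_{j}\|_{L^{1}(B_{R})}\le\omega(|h|)\,|Eu_{j}|(\R^{n})$, and $\omega(r)\to 0$ as $r\to 0$ because translations act continuously on $L^{1}$. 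Together with the uniform $L^{1}$-bound and the fixed compact support, this is exactly the hypothesis of the Fr\'echet--Kolmogorov theorem, so a subsequence converges in $L^{1}(\Omega)$; interpolating $\|u_{j}-u_{k}\|_{L^{q}}\le\|u_{j}-u_{k}\|_{L^{1}}^{\theta}\,\|u_{j}-u_{k}\|_{L^{n/(n-1)}}^{1-\theta}$ against the uniform $L^{n/(n-1)}$-bound, with $\theta\in(0,1]$ chosen so that $\tfrac1q=\theta+\tfrac{(1-\theta)(n-1)}{n}$, upgrades this to convergence in $L^{q}(\Omega)$.

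In summary, the only genuinely hard ingredient is the strong endpoint Sobolev inequality for the symmetric gradient; everything else is extension theory, the elementary second-derivative identity and the resulting convolution representation, the Fr\'echet--Kolmogorov compactness criterion, and interpolation of $L^{p}$-spaces.
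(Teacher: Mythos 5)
The paper offers no proof of this theorem --- it is quoted from Temam (Theorems II.2.2 and II.2.4) and Anzellotti--Giaquinta --- so the comparison is with the classical argument you would find there. Your reduction is sound: the extension to $BD(\R^{n})$ with fixed compact support, the mollification/Fatou step, the identity $\partial_{i}\partial_{j}u_{k}=\partial_{i}(\E u)_{jk}+\partial_{j}(\E u)_{ik}-\partial_{k}(\E u)_{ij}$ with the resulting representation $u=K\ast \E u$ for a kernel homogeneous of degree $1-n$, the Fr\'echet--Kolmogorov compactness argument, and the final interpolation are all correct, and you rightly isolate the single hard point --- the strong endpoint inequality $\|u\|_{L^{n/(n-1)}}\le c_{n}\|\E u\|_{L^{1}}$ --- together with the reason it is hard (Ornstein's non-inequality blocks the detour through $\|\nabla u\|_{L^{1}}$). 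Where you genuinely diverge from the classical treatment is exactly there: Strang--Temam's proof is elementary, exploiting that $\partial_{e}(u\cdot e)=e^{T}(\E u)\,e$ for each direction $e$ and running a Gagliardo--Nirenberg slicing argument over a finite family of directions, with no harmonic analysis; you instead propose the potential representation plus the endpoint estimate for elliptic and cancelling operators in the spirit of Bourgain--Brezis--Van Schaftingen. Both routes are valid; the classical one is self-contained, while yours is shorter but imports a deeper black box. Since you leave that endpoint inequality as a citation rather than proving it, your write-up matches the paper's own level of completeness (a reference) rather than exceeding it --- if you wanted a fully self-contained proof, the slicing argument is the cheaper way to close the gap.
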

\item
The singular part of the gradient of a function with bounded variation has a rank-one structure.
This was proved in \cite{Alberti} and is commonly known as Alberti's rank one theorem.
Very recently De Philippis and Rindler could show the analogous property for functions of bounded deformation
(see \cite[Theorem 1.7]{DePhilippisRindler}):
\begin{theo}
\label{theo:BD-Alberti}
Let $ \Omega \subset \R^{n} $ be an open set and let $ u \in BD( \Omega ) $.
Then, for $ | E^{s} u | $-a.e.~$ x \in \Omega $, there exist $ a(x) , b(x) \in \R^{n} \setminus \{0\} $ such that
\[ \frac{ d E^{s} u }{ d | E^{s} u | } = a(x) \odot b(x). \]
\end{theo}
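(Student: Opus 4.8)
\emph{Proof strategy.} The plan is to obtain this statement as a special case of the general PDE structure theorem of De Philippis and Rindler. That theorem asserts: if $\mathcal{A} = \sum_{|\alpha| = k} A_{\alpha} \partial^{\alpha}$ is a homogeneous linear constant-coefficient differential operator acting on $\R^{N}$-valued fields on $\R^{n}$, and $\mu \in M(\Omega;\R^{N})$ satisfies $\mathcal{A}\mu = 0$ distributionally, then the polar $\frac{d\mu^{s}}{d|\mu^{s}|}(x)$ of the Lebesgue-singular part $\mu^{s}$ lies, for $|\mu^{s}|$-a.e.~$x \in \Omega$, in the wave cone $\Lambda_{\mathcal{A}} := \bigcup_{\xi \in \R^{n}\setminus\{0\}} \ker \mathbb{A}(\xi)$, where $\mathbb{A}(\xi) = \sum_{|\alpha|=k} A_{\alpha} \xi^{\alpha}$ is the principal symbol. (This is the very mechanism through which Alberti's rank one theorem \cite{Alberti} for $BV$ is recovered, by taking $\mathcal{A}$ to be the curl operator, whose symbol has kernel $\{\xi \otimes a : a \in \R^{n}\}$ and hence wave cone the rank-one matrices.) Granting this, it remains only to produce an operator annihilating $Eu$ and to compute its wave cone — both purely algebraic tasks.

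For the first, I would take $\mathcal{A}$ to be the second-order Saint-Venant compatibility (``$\operatorname{curl}\operatorname{curl}$'') operator, acting on $\M{n}{n}_{\sm}$-valued fields $X = (X_{jl})$ by
\[ (\mathcal{A} X)_{ijkl} := \partial_{i}\partial_{k} X_{jl} + \partial_{j}\partial_{l} X_{ik} - \partial_{i}\partial_{l} X_{jk} - \partial_{j}\partial_{k} X_{il}. \]
Substituting $(Eu)_{jl} = \tfrac{1}{2}(\partial_{j} u_{l} + \partial_{l} u_{j})$, valid for any $u \in \mathcal{D}'(\Omega;\R^{n})$ and in particular for $u \in BD(\Omega)$ with $Eu \in M(\Omega;\M{n}{n}_{\sm})$, and using that distributional third derivatives commute, one checks that every term cancels, so $\mathcal{A}(Eu) = 0$; thus $\mu := Eu$ is $\mathcal{A}$-free.

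For the wave cone: the symbol is $(\mathbb{A}(\xi)X)_{ijkl} = \xi_{i}\xi_{k} X_{jl} + \xi_{j}\xi_{l} X_{ik} - \xi_{i}\xi_{l} X_{jk} - \xi_{j}\xi_{k} X_{il}$. Plugging in $X = a \odot \xi$ for arbitrary $a \in \R^{n}$, the eight resulting monomials cancel in pairs, so $a \odot \xi \in \ker\mathbb{A}(\xi)$. For the converse inclusion, fix coordinates with $\xi = e_{1}$; the component $i = k = 1$, $j,l \ge 2$ of $\mathbb{A}(e_{1})X = 0$ reads $X_{jl} = 0$, so $X$ vanishes off the first row and column, which forces $X = e_{1} \odot a$ with $a_{1} = X_{11}$ and $a_{k} = 2X_{1k}$ for $k \ge 2$. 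Hence $\ker\mathbb{A}(\xi) = \{a \odot \xi : a \in \R^{n}\}$ and, taking the union over $\xi \ne 0$,
\[ \Lambda_{\mathcal{A}} = \{ a \odot b : a,b \in \R^{n} \}. \]
Applying the structure theorem to $\mu = Eu$ then gives $\frac{dE^{s}u}{d|E^{s}u|}(x) \in \Lambda_{\mathcal{A}}$ for $|E^{s}u|$-a.e.~$x$; since this polar has unit norm $|E^{s}u|$-a.e., it is a nonzero element of $\Lambda_{\mathcal{A}}$, hence necessarily of the form $a(x) \odot b(x)$ with $a(x), b(x) \in \R^{n}\setminus\{0\}$, which is the assertion.

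The only substantial ingredient here is the De Philippis and Rindler structure theorem itself, whose proof is genuinely deep — it rests on a dimensional lower bound for the support of an $\mathcal{A}$-free singular measure, established through a blow-up/rigidity argument combined with harmonic-analysis and geometric-measure-theory techniques — and which I would use as a black box; indeed the reduction above is precisely how \cite[Theorem~1.7]{DePhilippisRindler} is deduced there. Everything else is elementary linear algebra, the only points deserving care being the distributional identity $\mathcal{A}(Eu) = 0$ and the fact that the structure theorem is available for a second-order operator acting on measures (rather than merely a first-order one on $L^{1}$ densities), both of which are covered in \cite{DePhilippisRindler}.
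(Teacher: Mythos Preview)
Your proposal is correct. Note, however, that the paper itself does not supply a proof of this theorem: it is quoted as \cite[Theorem~1.7]{DePhilippisRindler} and used as a black box. What you have written is precisely the deduction carried out in \cite{DePhilippisRindler} to obtain their Theorem~1.7 from their general structure theorem for $\mathcal{A}$-free measures, so your approach coincides with that of the cited source.
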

\end{trivlist}
\subsection{Approximate differentiability}
\label{subsection:app-diff-BD}
\begin{trivlist}
\item
Although for the functions of bounded deformation the (full) gradient is in general not even a Radon measure, 
they still can be locally (on average) approximated by linear functions. More precisely,
\begin{theo}
\label{theo:BD-approx-diff-L1}
For every $ u \in BD( \Omega ) $ there exists a negligible set $ N \subset \Omega $ such that for all
$ x_{0} \in \Omega \setminus N $ there exists a matrix $ L_{ x_{0} } \in \M{n}{n} $ such that
\[ \lim_{ r \to 0 } \frac{1}{ r^{n} } \int_{ B_{r}( x_{0} ) }
\frac{ | u(x) - u( x_{0} ) - L_{ x_{0} } ( x - x_{0} ) | }{ r } \x = 0. \]
Therefore, $u$ is a.e.~approximately differentiable with $ L_{ x_{0} } = \nabla u( x_{0} ) $ being the approximate differential. 
Moreover, it holds $ \E u(x) = \sm \nabla u(x) $ for a.e.~$ x \in \Omega $.
The function $ \nabla u $ is in the weak-$ L^{1} $-space since
\[ | \{ x : | \nabla u(x) | > t \} | \le \frac{ c(n) }{ t } | Eu |( \Omega ). \]
\end{theo}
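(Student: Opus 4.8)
The plan is to reduce the statement to the classical approximate differentiability of $BV$ functions via slicing. Fix $u \in BD(\Omega)$. The key observation is that for any fixed direction $\xi \in S^{n-1}$, the scalar function $u_\xi := u \cdot \xi$, restricted to almost every line parallel to $\xi$, is a one-dimensional $BV$ function, since the directional derivative $\partial_\xi(u\cdot\xi) = \xi \cdot (E u)\xi$ is (the $\xi\xi$-component of) a Radon measure. Concretely, choosing $\xi = e_i + e_j$ and $\xi = e_i$ and taking linear combinations, one recovers that $\partial_j u_i + \partial_i u_j$ and $\partial_i u_i$ are measures for all $i,j$; iterating the one-dimensional $BV$ pointwise differentiability theorem along a countable dense set of directions and using Fubini, one obtains that at $\L^n$-a.e.\ $x_0$ the function $u$ admits a first-order Taylor expansion in the $L^1$-averaged sense of the statement, with some matrix $L_{x_0}$. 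This is the route taken by Ambrosio, Coscia and Dal Maso in \cite{AmbrosioCosciaDalMaso}, which I would cite for the core of the argument; alternatively one can invoke the general theory of approximate differentiability of $L^1$ functions whose distributional symmetrized gradient is a measure.

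Next I would identify $L_{x_0}$. Having the expansion
\[ \frac{1}{r^n}\int_{B_r(x_0)} \frac{|u(x)-u(x_0)-L_{x_0}(x-x_0)|}{r}\x \to 0, \]
approximate differentiability in the measure-theoretic sense gives that $L_{x_0}$ is precisely the approximate differential $\nabla u(x_0)$. To see that $\sm L_{x_0} = \E u(x_0)$ for a.e.\ $x_0$, I would test the expansion against $\DIV(\Phi_{\sm})$ for $\Phi \in C_c^\infty$ localized near $x_0$: blowing up at a Lebesgue point $x_0$ of $\E u$ (with respect to $\L^n$, and a point where $|E^s u|$ has vanishing density, which holds $\L^n$-a.e.), the rescaled functions $u_r(y) := (u(x_0+ry)-u(x_0))/r$ converge in $L^1_{loc}$ to the linear map $y \mapsto L_{x_0} y$, while their symmetrized derivatives converge weakly-$*$ to $\E u(x_0)\,\L^n$; hence $\sm L_{x_0} = \E u(x_0)$. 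This is the standard blow-up identification and is largely routine once the $L^1$ expansion is in hand.

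Finally, for the weak-$L^1$ bound on $\nabla u$: I would again argue by slicing. For a fixed direction $\xi$, the one-dimensional derivative of $u\cdot\xi$ on a.e.\ line is a measure whose total variation integrates (by Fubini) to a quantity controlled by $|Eu|(\Omega)$; the classical one-dimensional weak-$L^1$ estimate for the a.e.\ pointwise derivative of a $BV$ function, combined with integration over the family of lines, yields $|\{x : |\partial_\xi(u\cdot\xi)(x)| > t\}| \le C t^{-1}|Eu|(\Omega)$. Summing over finitely many directions $\xi$ spanning the space of symmetric matrices (e.g.\ $e_i$ and $e_i+e_j$) and using that $\nabla u$ is determined by $\E u = \sm\nabla u$ together with... — here is the one genuine subtlety — the \emph{antisymmetric} part of $\nabla u$, which is \emph{not} directly controlled by $Eu$. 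The resolution is that the bound is asserted for $\nabla u$ as the approximate differential, and one uses that on the set where $u$ is approximately differentiable the full gradient's distribution is governed by the same slicing estimates applied to $u\cdot\xi$ for $\xi$ ranging over a basis of $\R^n$ — since $\partial_\xi(u\cdot e_k)$ for all $\xi, k$ recovers every entry of $\nabla u$, and each such scalar directional derivative is, on a.e.\ line, the pointwise derivative of a one-dimensional function of bounded variation (because $u\cdot e_k \in BD$-slice is $BV$ in \emph{every} direction, not only symmetric combinations — this uses that $u \in L^{n/(n-1)}$ and the measure structure of $Eu$ to control mixed second differences). I expect this last point — verifying that each component of the approximate differential lies in weak-$L^1$ with the stated constant, given that only the symmetrized gradient is a priori a measure — to be the main technical obstacle, and I would handle it by the Ambrosio–Coscia–Dal Maso slicing machinery rather than re-deriving it.
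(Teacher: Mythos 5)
The paper gives no proof of this theorem at all: immediately after the statement it writes ``For the proof, see [AmbrosioCosciaDalMaso, Theorem 7.4]'', with a further pointer to Hajłasz for more general tools. Since your proposal ultimately also defers the core of the argument to Ambrosio--Coscia--Dal Maso, your overall strategy coincides with what the paper does, and citing that reference is a legitimate ``proof'' here.

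That said, the sketch you wrap around the citation contains a genuine error that you should not let stand. The $BD$ slicing theorem only gives that $t \mapsto u(y+t\xi)\cdot\xi$ is one-dimensional $BV$ for a.e.\ line in direction $\xi$ --- i.e.\ it controls the diagonal quantities $\xi\cdot(Eu)\xi$. Your final paragraph asserts that $u\cdot e_k$ is $BV$ along \emph{every} direction $\xi$, so that $\partial_\xi(u\cdot e_k)$ is a measure for all $\xi,k$; if that were true, every entry of $Du$ would be a Radon measure and $BD(\Omega)$ would coincide with $BV(\Omega;\R^n)$, which is false (this is exactly the content of Ornstein's non-inequality and the reason $BD$ is strictly larger than $BV$). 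No appeal to $u\in L^{n/(n-1)}$ or to ``mixed second differences'' rescues this. Relatedly, the first step --- obtaining the full matrix expansion $u(x)-u(x_0)-L_{x_0}(x-x_0)$ by ``iterating the one-dimensional theorem along a countable dense set of directions and using Fubini'' --- is not a valid shortcut: the directional information only concerns the components $u\cdot\xi$ along $\xi$-lines, and assembling from it an affine approximation of the vector field $u$ on a full ball (including the skew-symmetric part of $L_{x_0}$, which $Eu$ does not see directly) is precisely the hard part of [ACD, Theorem 7.4]; their proof goes through a weak-$L^1$ maximal-function estimate and a Lusin-type Lipschitz approximation on the sublevel sets of $M(|Eu|)$, together with a Korn--Poincar\'e rigidity argument, rather than a direct Fubini assembly. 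If you keep the proof as a citation, as the paper does, all is well; if you intend to reproduce the argument, the skew-symmetric part is where your outline breaks down.
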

\item
For the proof, see \cite[Theorem 7.4]{AmbrosioCosciaDalMaso}. More general tools are presented in \cite{Hajlasz}.
We will improve this $L^{1}$-differentiability property 
to $ L^{ \frac{n}{n-1} } $-differentiability analogously as suggested in \cite{AmbrosioFuscoPallara} for functions of bounded variation. 
Let us mention that the proof of $ L^{q} $-differentiability for $ 1 \le q < \frac{n}{n-1} $ was recently done in \cite{ABC}.
\item
We will need an appropriate version of the Poincar\'{e}-Korn inequality 
(see \cite[Remark II.1.1]{Temam} and \cite[Corollary 4.20]{Bredies}):
\begin{theo}
Let $ {\cal R} $ be the set of all infinitesimal rigid motions in $ \R^{n} $. 
For a bounded Lipschitz domain $ \Omega \subset \R^{n} $,
there exists a constant $ C = C( \Omega ) $ such that for every $ u \in BD( \Omega ) $ 
\[ \min_{ \rho \in {\cal R} } \| u - \rho \|_{ L^{ \frac{n}{n-1} }( \Omega ; \R^{n} ) } \le C \| E u \|_{ M( \Omega ; \M{n}{n} ) }. \]
Therefore, for every projection $R$ onto $ {\cal R} $, we have (possibly for a larger constant)
\[ \| u - R(u) \|_{ L^{ \frac{n}{n-1} }( \Omega ; \R^{n} ) } \le C( R , \Omega ) \| E u \|_{ M( \Omega ; \M{n}{n} ) }. \]
\end{theo}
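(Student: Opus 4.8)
\emph{Strategy.} The plan is to prove the bound by the classical compactness--contradiction argument, establishing it first at exponent $1$ and then at the critical exponent $\tfrac{n}{n-1}$ by a bootstrap through the continuous Sobolev-type embedding of Theorem~\ref{theo:BD-embeddings}; one may assume $\Omega$ connected, since otherwise one argues on each connected component and adds up. Two facts will be used: from Theorem~\ref{theo:BD-embeddings}, the continuous embedding $BD(\Omega)\hookrightarrow L^{n/(n-1)}(\Omega;\R^n)$ and the compact embedding $BD(\Omega)\hookrightarrow\hookrightarrow L^{1}(\Omega;\R^n)$; and the classical fact that $Ev=0$ on a connected open set forces $v$ to be an infinitesimal rigid motion, $v\in{\cal R}$ (differentiating the relations $\partial_iv_j+\partial_jv_i=0$ gives $\partial_k\partial_iv_j=0$ for all $i,j,k$, so $v$ is affine with skew-symmetric gradient).

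\emph{Step 1: the $L^1$ estimate.} First I would show that there is $C=C(\Omega)$ with $\min_{\rho\in{\cal R}}\|u-\rho\|_{L^1(\Omega;\R^n)}\le C\|Eu\|_M$ for all $u\in BD(\Omega)$. If this were false, there would be $u_j\in BD(\Omega)$ with $\min_{\rho\in{\cal R}}\|u_j-\rho\|_{L^1}>j\,\|Eu_j\|_M$. Since ${\cal R}$ is a finite-dimensional linear space and $\rho\mapsto\|u_j-\rho\|_{L^1}$ is continuous and coercive on it, the minimum is attained at some $\rho_j$, and $c_j:=\|u_j-\rho_j\|_{L^1}>0$ (if $c_j=0$ then $u_j\in{\cal R}$, hence $Eu_j=0$, contradicting $c_j>j\,\|Eu_j\|_M$). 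Put $v_j:=(u_j-\rho_j)/c_j$. Then $\|v_j\|_{L^1}=1$, the minimum of $\rho\mapsto\|v_j-\rho\|_{L^1}$ over ${\cal R}$ is still attained at $0$ (linearity of ${\cal R}$), and $\|Ev_j\|_M=\|Eu_j\|_M/c_j<1/j\to0$. Hence $(v_j)$ is bounded in $BD(\Omega)$, so along a subsequence $v_j\to v$ in $L^1(\Omega;\R^n)$ with $\|v\|_{L^1}=1$. Passing to the limit in $\int_\Omega\Phi\,dEv_j=-\int_\Omega v_j\cdot\DIV(\Phi_{\sm})\x$ for $\Phi\in C^\infty_c(\Omega;\M{n}{n})$ --- the left side tends to $0$ since $\|Ev_j\|_M\to0$, the right side to $-\int_\Omega v\cdot\DIV(\Phi_{\sm})\x$ --- yields $Ev=0$, hence $v\in{\cal R}$. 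But then the minimality of $0$ forces $1=\|v_j\|_{L^1}\le\|v_j-v\|_{L^1}\to0$, a contradiction.

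\emph{Step 2: bootstrap and projection form.} For $u\in BD(\Omega)$ choose a minimizer $\rho_0\in{\cal R}$ of $\rho\mapsto\|u-\rho\|_{L^1}$; by Step~1, $\|u-\rho_0\|_{L^1}\le C\|Eu\|_M$. Since $E\rho_0=0$, the continuous embedding gives $\|u-\rho_0\|_{L^{n/(n-1)}}\le C'\|u-\rho_0\|_{BD}=C'\big(\|u-\rho_0\|_{L^1}+\|Eu\|_M\big)\le C'(C+1)\|Eu\|_M$, so $\min_{\rho\in{\cal R}}\|u-\rho\|_{L^{n/(n-1)}}\le C''\|Eu\|_M$, which is the first assertion. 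For the second, let $R$ be a bounded linear projection of $L^{n/(n-1)}(\Omega;\R^n)$ onto the finite-dimensional (hence complemented) subspace ${\cal R}$; since $R\rho=\rho$ for every $\rho\in{\cal R}$, one has $u-Ru=(\id-R)(u-\rho)$ for all $\rho\in{\cal R}$, whence $\|u-Ru\|_{L^{n/(n-1)}}\le\|\id-R\|\,\min_{\rho\in{\cal R}}\|u-\rho\|_{L^{n/(n-1)}}\le\|\id-R\|\,C''\|Eu\|_M$. (If $R$ is instead a bounded projection on $BD(\Omega)$, one argues the same way using $\|u-\rho_0\|_{BD}\le(C+1)\|Eu\|_M$.)

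\emph{Expected main obstacle.} There is in fact no serious obstacle here: this is a standard compactness argument once the embeddings of Theorem~\ref{theo:BD-embeddings} are available, all the analytic work being hidden in those. The one point needing care is that the contradiction argument cannot be carried out directly at the critical exponent $\tfrac{n}{n-1}$, because $BD(\Omega)\hookrightarrow L^{n/(n-1)}(\Omega;\R^n)$ is not compact; this is exactly why I first work at exponent $1$ and only afterwards upgrade the estimate via the continuous embedding.
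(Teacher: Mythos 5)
Your proof is correct and is essentially the standard compactness--contradiction argument (normalize, use the compact embedding $BD(\Omega)\hookrightarrow\hookrightarrow L^1$, identify the limit as a rigid motion, then upgrade to the exponent $\tfrac{n}{n-1}$ via the continuous embedding), which is exactly the route taken in the references the paper cites for this statement (Temam, Remark II.1.1; Bredies, Corollary 4.20) rather than proved in the paper itself. The only slip is your aside about disconnected $\Omega$: there the estimate with a single global $\rho\in{\cal R}$ is false (take $u$ locally constant with different values on two components), but this is irrelevant since a bounded Lipschitz domain is connected by assumption.
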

\item
We will need this inequality explicitly for balls so let us according to \cite[Remark~II.1.1]{Temam} define one projection. 
Fix $ B_{r}( x_{0} ) \subset \R^{n} $. Define
\[ R_{ x_{0} , r }(u)(x) := 
\frac{1}{ | B_{r} | } \int_{ B_{r}( x_{0} ) } u(y) \y
+ \frac{1}{ J_{r} } \left( \int_{ B_{r}( x_{0} ) } u(y) \times ( y - x_{0} ) \y \right) ( x - x_{0} ) \]
where $ J_{r} $ is defined by
\[ J_{r} 
:= \int_{ B_{r}(0) } y_{1}^{2} \y 
= \frac{ \pi^{n/2} }{ (n+2) \Gamma( \frac{n}{2} + 1 ) } \ r^{n+2}
=: J_{1} r^{n+2} \]
and for $ a , b \in \R^{n} $ we are denoting $a \times b := \tfrac{1}{2} ( a \otimes b - b \otimes a ) $. 
Using that every $ \rho \in {\cal R} $ can be written as a sum of a constant function and a linear combination of functions $ x \mapsto ( e_{i} \times e_{j} ) ( x - x_{0} ) $, 
one verifies that $ R_{ x_{0} , r } $ is indeed a projection.
\item
By changing variables in the Poincar\'{e} inequality, we see that the constant is translation and scaling invariant, 
i.e., $ C( R_{ x_{0} , r } , B_{r}( x_{0} ) ) = C( R_{0,1} , B_{1}(0) ) $.
\begin{cor}
\label{cor:BD-approx-diff-L1*}
Every $ u \in BD( \Omega ) $ is a.e.~$ L^{ \frac{n}{n-1} } $-differentiable, 
and for a.e.~$ x_{0} \in \Omega $ it holds
\[ \lim_{ r \to 0 } \frac{1}{ r^{n} } \int_{ B_{r}( x_{0} ) }
\left| \frac{ u(x) - u( x_{0} ) - \nabla u( x_{0} ) ( x - x_{0} ) }{ r } \right|^{ \frac{n}{n-1} } \x = 0. \]
\end{cor}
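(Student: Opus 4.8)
The plan is to upgrade the $L^1$-approximate differentiability of Theorem~\ref{theo:BD-approx-diff-L1} to $L^{n/(n-1)}$-differentiability by a truncation/interpolation argument, mimicking the corresponding step for $BV$ in \cite{AmbrosioFuscoPallara}. The starting point: by Theorem~\ref{theo:BD-approx-diff-L1}, $u$ is $L^1$-approximately differentiable at a.e.~$x_0$, with approximate differential $\nabla u(x_0)$ and $\E u(x_0) = \sm \nabla u(x_0)$ a.e. Fix such a point $x_0$, which we also take to be a Lebesgue point of $\E u$ and a point where the singular part $|E^s u|$ has density zero with respect to $\L^n$, and where the density of $|Eu|$ equals $|\E u(x_0)|$ (all of these hold a.e.). Set $v(x) := u(x) - u(x_0) - \nabla u(x_0)(x - x_0)$, so $Ev = Eu - \E u(x_0)\,\L^n$ on $B_r(x_0)$. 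We must show $r^{-n}\int_{B_r(x_0)} |v/r|^{n/(n-1)}\,dx \to 0$.

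First I would apply the Poincar\'e--Korn inequality on $B_r(x_0)$ to $v$. Note that $R_{x_0,r}(v) = 0$: the mean of $v$ over $B_r(x_0)$ has the same order as the $L^1$-modulus (it tends to $0$ faster than $r$ after dividing by $r$), and the antisymmetric moment of $v$ is controlled similarly — more precisely, since $\nabla u(x_0)$ is the exact approximate differential, the rigid-motion projection of $v$ is negligible at the relevant scale; this lets us replace $v$ by $v - R_{x_0,r}(v)$ at the cost of an $o(r)$ error in $L^1$, hence an $o(r \cdot r^{n(n-1)/n}) = o(r^n)^{(n-1)/n}\cdot r$ error, which is harmless. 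Then
\[
\Big(\frac{1}{r^n}\int_{B_r(x_0)} \Big|\frac{v}{r}\Big|^{n/(n-1)}\x\Big)^{(n-1)/n}
= \frac{C}{r^{n}}\,\| v \|_{L^{n/(n-1)}(B_r(x_0))}\cdot r^{(n-1)/n - 1}\cdot(\text{scaling})
\le \frac{C}{r^{n}}\,|Ev|(B_r(x_0)),
\]
by the scale-invariant Poincar\'e--Korn inequality (the constant $C(R_{x_0,r},B_r(x_0)) = C(R_{0,1},B_1(0))$ is independent of $r$ and $x_0$). Now $|Ev|(B_r(x_0)) \le |Eu - \E u(x_0)\L^n|(B_r(x_0)) \le \int_{B_r(x_0)} |\E u(x) - \E u(x_0)|\,dx + |E^s u|(B_r(x_0))$. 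Dividing by $r^n = |B_r|/\omega_n$: the first term tends to $0$ because $x_0$ is a Lebesgue point of $\E u$, and the second tends to $0$ because $x_0$ has vanishing $|E^s u|$-density. Hence the left side tends to $0$, which is the claim.

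The main obstacle is the justification that the rigid-motion correction $R_{x_0,r}(v)$ is negligible at scale $r$ — i.e.~that after subtracting the exact approximate differential, not only the $L^1$-average of $v/r$ vanishes but so does the antisymmetric moment $J_r^{-1}\int_{B_r(x_0)} v(y)\times(y-x_0)\,dy$ divided by the appropriate power of $r$. This follows because $v(y) = o(|y - x_0|)$ in the $L^1$-averaged sense guaranteed by Theorem~\ref{theo:BD-approx-diff-L1}, so $|v(y)\times(y-x_0)| \le |v(y)|\,r$ on $B_r(x_0)$ and the integral is $o(r)\cdot r \cdot r^n = o(r^{n+2})$, exactly matching the normalization $J_r = J_1 r^{n+2}$; thus the linear part of $R_{x_0,r}(v)$ has $L^\infty$-norm on $B_r(x_0)$ of order $o(r)$, negligible after dividing by $r$. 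Once this bookkeeping is in place, the only genuine input beyond Theorem~\ref{theo:BD-approx-diff-L1} is the scale-invariance of the Poincar\'e--Korn constant on balls, which has already been recorded above. This also settles the borderline exponent $q = \frac{n}{n-1}$ left open in \cite{ABC}.
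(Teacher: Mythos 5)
Your argument is correct and is essentially the paper's own proof: choose $x_0$ to be a Lebesgue point of $\E u$ with vanishing $|E^s u|$-density where $L^1$-approximate differentiability holds, apply the scale-invariant Poincar\'e--Korn inequality on $B_r(x_0)$ to $v(x)=u(x)-u(x_0)-\nabla u(x_0)(x-x_0)$, and check that the rigid-motion projection $R_{x_0,r}(v)$ is $o(r)$ on $B_r(x_0)$ via the normalization $J_r = J_1 r^{n+2}$. The opening reference to a ``truncation/interpolation argument'' is a misnomer for what you actually do, but the substance matches the paper's proof step for step.
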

\begin{proof}
Let be $ x_{0} \in \Omega $ any point such that
\begin{itemize}
\item Theorem~\ref{theo:BD-approx-diff-L1} holds,
\item $ x_{0} $ is Lebesgue point of $ \E u $,
\item $ \lim_{ r \to 0 } \frac{1}{ r^{n} } | E^{s} u |(  B_{r}( x_{0} ) ) = 0 $.
\end{itemize} 
By the Besicovitch derivation theorem~\ref{theo:Besicovitch}, a.e.~$ x_{0} $ meets these conditions.
Applying the Poincar\'{e} inequality to the function $ \tilde{u}(x) := u(x) - u( x_{0} ) - L_{ x_{0} }( x - x_{0} ) $ yields
\[ \| \tilde{u} - R_{ x_{0} , r }( \tilde{u} ) \|_{ L^{ \frac{n}{n-1} } } \le C_{1} \| E \tilde{u} \|_{ M } \]
or
\begin{multline*} 
\left( \frac{1}{ r^{n} } \int_{ B_{r}( x_{0} ) }
\frac{ | \tilde{u}(x) - R_{ x_{0} , r }( \tilde{u} )(x) |^{ \frac{n}{n-1} } }{ r^{ \frac{n}{n-1} }} \x \right)^{ \frac{n-1}{n} }  \le \\
\le \frac{ C_{1} }{ r^{n} } \left( \int_{ B_{r}( x_{0} ) } | \E u(x) - \E u( x_{0} ) | \x + | E^{s} u |(  B_{r}( x_{0} ) ) \right). 
\end{multline*}
The right side converges for the chosen $ x_{0} $ to 0 as $ r \to 0 $.
Therefore, the claim will be proved when we show 
\[ \lim_{ r \to 0 } \frac{1}{ r^{n} } \int_{ B_{r}( x_{0} ) } 
\frac{ | R_{ x_{0} , r }( \tilde{u} )(x) |^{ \frac{n}{n-1}} }{ r^{ \frac{n}{n-1} }} \x = 0. \]
Denote $ \tilde{a}_{r} + \tilde{A}_{r} ( x - x_{0} ) := R_{ x_{0} , r }( \tilde{u} )(x) $.
According to Theorem~\ref{theo:BD-approx-diff-L1}
\[ \lim_{ r \to 0 } \frac{ \tilde{a}_{r} }{r}
= \lim_{ r \to 0 } \frac{1}{ | B_{r}( x_{0} ) | } \int_{ B_{r}( x_{0} ) } \frac{ \tilde{u}(x) }{r} \x
= 0. \]
Moreover, from
\[ \tilde{A}_{r}
= \frac{1}{ J_{1} r^{n+2} } \int_{ B_{r}( x_{0} ) } \tilde{u}(x) \times ( x - x_{0} ) \x, \]
it follows
\[ | \tilde{A}_{r} |
\le \frac{1}{ J_{1} r^{n+2} } \int_{ B_{r}( x_{0} ) } | \tilde{u}(x) | | x - x_{0} | \x
\le \frac{1}{ J_{1} r^{n} } \int_{ B_{r}( x_{0} ) } \frac{ | \tilde{u}(x) | }{r} \x \to 0. \]
Hence,
\[ \lim_{ r \to 0 } 
\frac{1}{ r^{n} } \int_{ B_{r}( x_{0} ) } \frac{ | \tilde{A}_{r}( x - x_{0} ) |^{ \frac{n}{n-1} } }{ r^{ \frac{n}{n-1} } } \x = 0. \qedhere \]
\end{proof}
\end{trivlist}

\subsection{The space $U$}
\label{subsection:U}
\begin{trivlist}
\item
For an integral functional whose density has Hencky plasticity growth,
its natural domain is the space
\[ LU( \Omega ) := \{ u \in LD( \Omega ) : \DIV u \in L^{2}( \Omega ) \}. \]
Endowed with 
\[ \| u \|_{ U( \Omega ) } := \| u \|_{ LD( \Omega ) } + \| \DIV u \|_{ L^{2}( \Omega ) }, \]
it is clearly a Banach space. If $ \Omega $ is a bounded Lipschitz domain, 
$ C^{\i}( \overline{ \Omega } ; \R^{n} ) $ is a dense subset 
(combine Proposition~I.1.3 with the proof of Theorem~II.3.4 in \cite{Temam}). 
We will analogously as in Sobolev spaces denote
\[ LU_{0}( \Omega ) := \overline{ C_{c}^{\i}( \Omega ; \R^{n} ) }. \]
Let $ f : \Omega \times \M{n}{n}_{\sm} \to \R $ be a Carath\'{e}odory function with Hencky plasticity growth.
For every $ X \in \M{n}{n}_{\sm} $, $ \varphi \in LU_{0}( \Omega ) $ and $ \eps > 0 $, 
there exists $ \varphi_{\eps} \in C^{\i}_{c}( \Omega ; \R^{n} ) $ 
such that
\[ \int_{ \Omega } f( x , X + \E \varphi(x) ) \x \ge \int_{ \Omega } f( x , X + \E \varphi_{\eps}(x) ) - \eps. \]
The proof follows the usual scheme of employing Fatou's lemma, 
passing to a.e.~pointwisely convergent sequence and using continuity of $f$ in the second variable.
\item
Due to the lack of weak compactness of bounded sequences in the space $ LU $, 
we introduce the corresponding space
\[ U( \Omega ) := \{ u \in BD( \Omega ) : \DIV u \in L^{2}( \Omega ) \} \]
(with the obvious norm). 
Clearly, since the trace part of $ E u $ is regular, we have $ E_{\dev}^{s} u = E^{s} u $.
The definitions and claims for $BD$ may be adapted in the following manner.
\begin{definition}\label{defi:strict-conv}
Let us have $ \{ u_{j} \}_{ j \in \N } \subset U( \Omega ) $ and $ u \in U( \Omega ) $.
\begin{enumerate}
\item
We say that $ \{ u_{j} \}_{ j \in \N } $ {\it weakly converges} to $ u $ in $ U( \Omega ) $, $ u_{j} \weakly u $, if
\begin{itemize}
\item
$ u_{j} \to u $ in $ L^{1}( \Omega ; \R^{n} ) $,
\item
$ E u_{j} \weakstar E u $ in $ M( \Omega ; \M{n}{n} ) $,
\item
$ \DIV u_{j} \weakly \DIV u $ in $ L^{2}( \Omega ) $. 
\end{itemize}
\item
The {\it strict} (or {\it intermediate}) {\it convergence}, $ u_{j} \gre{|.|} u $, means that 
\begin{itemize}
\item
$ u_{j} \to u $ in $ L^{1}( \Omega ; \R^{n} ) $,
\item
$ | E u_{j} |( \Omega ) \to | E u |( \Omega ) $,
\item
$ \DIV u_{j} \to \DIV u $ in $ L^{2}( \Omega ) $. 
\end{itemize}
(The underlying metric is clearly
\[ d(u,v) := \| u - v \|_{ L^{1} } + \big| | Eu |( \Omega ) - | Ev |( \Omega ) \big| + \| \DIV u - \DIV v \|_{ L^{2} }.) \]
\item
Let $ c : \M{n}{n}_{\sm} \to \R $ be a non-negative convex function with linear upper bound.
Then $ \{ u_{j} \}_{ j \in \N } $ {\it converges $ c $-strictly} in $ U( \Omega ) $ to $ u $,
symbolically $ u_{j} \gre{c} u $, if 
\begin{itemize}
\item
$ u_{j} \gre{|.|} u $ in $ U( \Omega ) $,
\item
$ \int_{ \Omega } c( E_{\dev} u_{j} ) \to \int_{ \Omega } c( E_{\dev} u ) $,
\item
$ \int_{ \Omega } c( E u_{j} ) \to \int_{ \Omega } c( E u ) $.
\end{itemize}
\end{enumerate}
\end{definition}
\item
First let us mention that a bounded sequence from $ U( \Omega ) $
contains a weakly convergent subsequence. 
This follows immediately from the corresponding results in $ BD( \Omega ) $ and $ L^{2}( \Omega ) $.
\item
For functions in $ U( \Omega ) $ outside $ LU( \Omega ) $, 
an approximation by smooth functions is not possible in the norm topology.
However, we may at least get an approximation in the $c$-strict topology.
We give this result in the form of \cite[Theorem~14.1.4]{AttouchButtazzoMichaille}:
\begin{theo}
\label{theo:c-strict-density}
Let $ \Omega \subset \R^{n} $ be a bounded Lipschitz domain 
and $ c : \M{n}{n}_{\sm} \to \R $ a non-negative convex function such that
\begin{itemize}
\item
there exist $ \alpha, \beta > 0 $ such that
for all $ X \in \M{n}{n}_{\sm} $
it holds 
\[ \alpha( |X| - 1 ) \le c(X) \le \beta( |X| + 1 ), \]
\item
the domain of its conjugate $ c^{*} $ is closed.
\end{itemize}
Then for every $ u \in U( \Omega ) $ 
there exists $ \{ u_{j} \}_{j \in \N} \subset C^{\i}( \Omega ; \R^{n} ) \cap LU( \Omega ) $ such that
\[ \gamma( u_{j} ) = \gamma( u ) 
\quad \mbox{and} \quad
u_{j} \gre{c} u \mbox{ in $ U( \Omega )$}. \]
\end{theo}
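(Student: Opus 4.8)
The plan is to produce the approximating sequence by the classical scheme for strict density in $BD$ --- a finite partition of unity subordinate to a Lipschitz atlas of a neighbourhood of $\partial\Omega$, followed on each boundary chart by an inward translation and a mollification --- and then to add a Bogovskii correction that restores quadratic integrability of the divergence. One then checks the conditions defining $c$-strict convergence one at a time, the two conditions involving $\int_\Omega c(\cdot)$ being supplied by the Reshetnyak continuity theorem for convex functions of a measure.

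Concretely: fix open sets $A_0\Subset\Omega$ and $A_1,\dots,A_m$ covering $\overline\Omega$ with each $A_i$ ($i\ge1$) so rotated that $\partial\Omega\cap A_i$ is a Lipschitz graph, so that translation by $\lambda_ie_n$, $\lambda_i$ larger than the Lipschitz constant, pushes $\Omega\cap A_i$ into $\Omega$; let $\theta_0,\dots,\theta_m$ be a subordinate partition of unity and $\rho_\eta$ a standard mollifier. For small $\eta>0$ set $v_{i,\eta}(x):=\int\rho_\eta(y)\,(\theta_iu)(x-y+\lambda_i\eta e_n)\,dy$ for $i\ge1$ and $v_{0,\eta}:=\rho_\eta*(\theta_0u)$; these are smooth on $\overline\Omega$, and $\widetilde u_\eta:=\sum_i v_{i,\eta}$ lies in $C^\infty(\overline\Omega;\R^n)\cap LU(\Omega)$. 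Using the Leibniz identities $E(\theta_iu)=\theta_iEu+\nabla\theta_i\odot u\,\L^n$ and $E_{\dev}(\theta_iu)=\theta_iE_{\dev}u+(\nabla\theta_i\odot u)_{\dev}\L^n$, the fact that mollification and inward translation do not increase total variation, that $\int_\Omega\theta_i\,d|Eu|$ (resp.\ $\int_\Omega\theta_i\,d|E_{\dev}u|$) sum to $|Eu|(\Omega)$ (resp.\ $|E_{\dev}u|(\Omega)$), and that the commutator terms $\sum_i\rho_\eta*(\nabla\theta_i\odot u)\to0$ in $L^1$ because $\sum_i\nabla\theta_i\equiv0$ on $\overline\Omega$ and $\nabla\theta_i\odot u\in L^1$, one obtains $\widetilde u_\eta\to u$ in $L^1$ together with $\limsup_\eta|E\widetilde u_\eta|(\Omega)\le|Eu|(\Omega)$ and $\limsup_\eta|E_{\dev}\widetilde u_\eta|(\Omega)\le|E_{\dev}u|(\Omega)$. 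With weak-$*$ lower semicontinuity this upgrades to strict convergence $E\widetilde u_\eta\to Eu$ and $E_{\dev}\widetilde u_\eta\to E_{\dev}u$. The identity $\gamma(\widetilde u_\eta)=\gamma(u)$ is secured by the standard trace-preserving refinement of this boundary construction, in which the translation and mollification parameters are let vanish at $\partial\Omega$ (cf.\ the proofs of the density theorems in \cite[Section~II.3]{Temam}); this is the classically delicate point.

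The divergence is what forces the extra work. One computes $\DIV\widetilde u_\eta=\sum_i\rho_\eta*(\theta_i\DIV u)+\sum_i\rho_\eta*(\nabla\theta_i\cdot u)=:A_\eta+g_\eta$ (translations suppressed). Here $A_\eta\to\DIV u$ in $L^2$ since $\theta_i\DIV u\in L^2$, but $g_\eta$ converges only in $L^{n/(n-1)}$ --- to $\sum_i\nabla\theta_i\cdot u=0$ --- because $\nabla\theta_i\cdot u$ lies merely in $L^{n/(n-1)}$ by the embedding $BD(\Omega)\hookrightarrow L^{n/(n-1)}$ of Theorem~\ref{theo:BD-embeddings}. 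Since $\int_\Omega g_\eta=\sum_i\int_\Omega\nabla\theta_i\cdot u=0$, the Bogovskii operator on the bounded Lipschitz domain $\Omega$ yields $b_\eta\in W^{1,n/(n-1)}_0(\Omega;\R^n)\cap C^\infty(\Omega;\R^n)$ with $\DIV b_\eta=g_\eta$ and $\|b_\eta\|_{W^{1,n/(n-1)}}\le C\|g_\eta\|_{L^{n/(n-1)}}\to0$. Then $u_\eta:=\widetilde u_\eta-b_\eta\in C^\infty(\Omega;\R^n)\cap LU(\Omega)$ has $\DIV u_\eta=A_\eta\to\DIV u$ in $L^2$; and since $b_\eta\to0$ in $W^{1,n/(n-1)}$, hence in $L^1$ and with $\|Eb_\eta\|_{L^1}\to0$, one keeps $u_\eta\to u$ in $L^1$, the strict convergences of $E\widetilde u_\eta$ and $E_{\dev}\widetilde u_\eta$ survive subtraction of $b_\eta$, and $\gamma(u_\eta)=\gamma(\widetilde u_\eta)=\gamma(u)$ because $\gamma(b_\eta)=0$.

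It remains to get $\int_\Omega c(Eu_\eta)\to\int_\Omega c(Eu)$ and $\int_\Omega c(E_{\dev}u_\eta)\to\int_\Omega c(E_{\dev}u)$. As $c$ is convex, finite on $\M{n}{n}_{\sm}$ and linearly bounded above, it is globally Lipschitz, so these integrals differ from the ones built on $\widetilde u_\eta$ by at most $\mathrm{Lip}(c)\,\|Eb_\eta\|_{L^1}\to0$; and for $\widetilde u_\eta$ the convergences are exactly the Reshetnyak-type continuity of $\mu\mapsto\int_\Omega c(\mu)$ along the strictly convergent sequences $E\widetilde u_\eta\to Eu$ and $E_{\dev}\widetilde u_\eta\to E_{\dev}u$, which holds precisely because $\dom c^*$ is closed --- this makes the positively $1$-homogeneous extension $(X,t)\mapsto t\,c(X/t)$ on $t>0$, $(X,0)\mapsto c^{\infty}(X)$, continuous up to $t=0$ (see \cite[Section~II.4]{Temam} and \cite{DemengelTemam:84}). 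The main obstacle is therefore not any single estimate but meeting all the $c$-strict requirements together with the trace identity at once --- in particular, cancelling the merely $L^{n/(n-1)}$-integrable partition-of-unity commutators that obstruct the $L^2$-convergence of the divergence by a Bogovskii term that is simultaneously trace-free and negligible in the finer norms controlling $E$, $E_{\dev}$ and the two integral functionals.
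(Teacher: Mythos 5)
First, a remark on provenance: the paper does not prove Theorem~\ref{theo:c-strict-density} itself but quotes it ``in the form of \cite[Theorem~14.1.4]{AttouchButtazzoMichaille}'', so your argument is to be judged on its own. Its architecture is sound and close in spirit to the literature: a mollification-based strict approximation in $BD$ combined with a Bogovskii correction that repairs the $L^2$-convergence of the divergence, the latter being exactly the new difficulty of $U(\Omega)$ as compared with $BD(\Omega)$. The genuine gap lies in the final step. You establish only \emph{strict} convergence, $E\widetilde u_\eta\weakstar Eu$ and $|E\widetilde u_\eta|(\Omega)\to|Eu|(\Omega)$, and then invoke ``Reshetnyak-type continuity of $\mu\mapsto\int_\Omega c(\mu)$ along strictly convergent sequences'', attributing its validity to the closedness of $\dom c^*$. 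For a convex integrand that is not positively $1$-homogeneous this principle is simply false: take $c=\langle\cdot\rangle$ (so $\dom c^*=\overline{B_1}$ is closed), $\mu=\L^1$ on $(0,1)$ and $\mu_j=\tfrac1j\sum_{k=1}^{j}\delta_{k/j}$; then $\mu_j\weakstar\mu$ and $|\mu_j|((0,1))=|\mu|((0,1))=1$, but $\int c(\mu_j)=1+1=2\neq\sqrt2=\int c(\mu)$. Continuity of $\mu\mapsto\int_\Omega c(\mu)$ needs the stronger \emph{area-strict} convergence $\langle\mu_j\rangle(\Omega)\to\langle\mu\rangle(\Omega)$ — which is precisely why the paper works throughout with the $\langle\cdot\rangle$-strict topology and Theorem~\ref{theo:Reshetnyak} — and you have not established it. For your mollified sequence the two limits $\int_\Omega c(E u_\eta)\to\int_\Omega c(Eu)$ and $\int_\Omega c(E_{\dev}u_\eta)\to\int_\Omega c(E_{\dev}u)$ are in fact true, but by a different mechanism: Jensen's inequality for the convex function of a measure gives $\limsup_\eta\int_\Omega c(E\widetilde u_\eta)\le\int_\Omega c(Eu)$ (your Lipschitz estimate then controls the commutator and Bogovskii perturbations), while the reverse inequality is the weak-$*$ lower semicontinuity of $\mu\mapsto\int_\Omega c(\mu)$ — and it is in identifying this lower semicontinuous functional with the Lebesgue-decomposition formula that the hypothesis ``$\dom c^*$ closed'' is actually consumed (Demengel--Temam). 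As written, the step meant to deliver the two defining conditions of $c$-strict convergence rests on a false statement.

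A second, smaller defect: the construction you actually write down (finite boundary atlas, inward translation by $\lambda_i\eta e_n$, mollification) produces functions smooth up to $\overline\Omega$ whose traces converge to $\gamma(u)$ in $L^1(\partial\Omega)$ but need not \emph{equal} it; exact preservation of the trace requires the Anzellotti--Giaquinta variant with a locally finite interior partition of unity accumulating at $\partial\Omega$. That switch is harmless for the $L^1$- and total-variation estimates, but it replaces your single commutator $g_\eta$ by an infinite sum of mean-zero commutators supported on the annuli of the partition, so the Bogovskii correction must be performed annulus by annulus with mollification parameters chosen to make the resulting $W^{1,n/(n-1)}$-norms summable. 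This is exactly the interaction you single out as the main obstacle, and it is asserted rather than carried out.
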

\begin{remark}
\label{remark:oglatoklepaj}
In Section~\ref{section:Hencky-limsup} we will use this approximation with the convex function $ c = \langle \cdot \rangle $
defined as
\[ \langle X \rangle := \sqrt{ 1 + |X|^{2} }. \]
It obviously fulfils the growth conditions and $ \langle \cdot \rangle^{*} $ has closed domain since
\[ \langle Y \rangle^{*} = \left\{ 
\begin{array}{cl} 
- \sqrt{ 1 - |Y|^{2} }, & |Y| \le 1, \\
\i, & |Y| > 1. 
\end{array}
\right. \]
\end{remark}
\item
In $ L^{p} $-spaces for $ p > 1 $, there exists a form of the Helmholtz decomposition 
(e.g., \cite[Section 2.3]{Kristensen94} or \cite[Example~3.14]{Grubb}).
We show that a similar result holds also in the space $U$, which appears to have been unnoticed so far. 
\item
We will need the following standard existence and regularity result for Poisson's equation.
\begin{theo}
\label{theo:Poisson}
Let $ \Omega \subset \R^{n} $ be a cube or a bounded open set with $ C^{1,1} $-boundary. 
For any $ f \in L^{2}( \Omega ) $ the Dirichlet problem for Poisson's equation 
\[ \triangle \phi = f, 
\quad \phi \in W^{1,2}_{0}( \Omega ), \]
has a unique weak solution $ \phi \in W^{2,2}( \Omega ) $. 
Moreover, there exists $ C > 0 $ such that
\[ \| \phi \|_{ W^{2,2} } \le C \| f \|_{ L^{2} }. \]
\end{theo}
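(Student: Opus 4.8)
The plan is to prove Theorem~\ref{theo:Poisson} by reducing to the well-developed standard elliptic theory. The statement is a classical combination of the Lax–Milgram lemma for existence–uniqueness of the weak solution and the Calderón–Zygmund / Agmon–Douglis–Nirenberg $L^2$-regularity estimate. First I would establish existence and uniqueness of a weak solution $\phi \in W^{1,2}_0(\Omega)$: the bilinear form $a(\phi,\psi) := \int_\Omega \nabla\phi \cdot \nabla\psi \x$ is bounded and, by the Poincaré inequality on the bounded set $\Omega$, coercive on $W^{1,2}_0(\Omega)$; the functional $\psi \mapsto -\int_\Omega f\psi \x$ is bounded on $W^{1,2}_0(\Omega)$ since $f \in L^2(\Omega)$ and $W^{1,2}_0(\Omega) \hookrightarrow L^2(\Omega)$. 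Lax–Milgram then yields a unique $\phi$ with $a(\phi,\psi) = -\int f\psi$ for all $\psi$, together with the a priori bound $\|\phi\|_{W^{1,2}} \le C\|f\|_{L^2}$.

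Next I would upgrade the regularity to $W^{2,2}$. For a bounded domain with $C^{1,1}$-boundary this is exactly the global elliptic regularity theorem (see, e.g., Gilbarg–Trudinger, Theorem~8.12, or Grisvard): the weak solution of $\triangle\phi = f$ with $f \in L^2$ and homogeneous Dirichlet data lies in $W^{2,2}(\Omega)$, with $\|\phi\|_{W^{2,2}} \le C(\|f\|_{L^2} + \|\phi\|_{L^2}) \le C\|f\|_{L^2}$, the last step using the $W^{1,2}$-bound just obtained. For the case that $\Omega$ is a cube, the boundary is only Lipschitz, not $C^{1,1}$, so the interior estimate must be combined with a reflection argument: near a flat face one extends $\phi$ by odd reflection across the face (which preserves the equation and the zero Dirichlet condition, since $\phi$ vanishes there), obtaining a $W^{1,2}$ solution on a larger domain to which interior $W^{2,2}$-estimates apply; near an edge or corner of the cube one iterates the reflection across the adjacent faces. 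The key point that makes this work for a cube — and is the one delicate spot — is that a cube is convex, so no regularity is lost at the corners; for general Lipschitz domains the analogous statement would fail, but the convexity of the cube (equivalently, the fact that successive reflections tile a neighborhood of each corner without overlap) guarantees the full $W^{2,2}$-regularity.

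The main obstacle is therefore the corner behavior in the cube case: one must verify that the reflected function genuinely solves Poisson's equation across the faces in the weak sense and that finitely many reflections suffice to cover a neighborhood of every boundary point, including the lower-dimensional edges and vertices. This is where I would spend the most care, checking that the reflected data match up (the right-hand side $f$ is extended by the corresponding reflection and stays in $L^2$) and invoking the interior estimate on the union. Alternatively, one may simply cite the fact that Poisson's problem on a convex polytope with $L^2$ datum has $W^{2,2}$-solutions (Grisvard, \emph{Elliptic Problems in Nonsmooth Domains}), which subsumes both cases. Either way, the final bound $\|\phi\|_{W^{2,2}} \le C\|f\|_{L^2}$ follows by absorbing the lower-order term via the Lax–Milgram estimate, completing the proof. \quad $\blacksquare$
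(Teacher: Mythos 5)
Your proposal is correct and is precisely the standard argument; the paper itself offers no proof of Theorem~\ref{theo:Poisson} but simply cites Gilbarg--Trudinger (Theorem~9.15) for the $C^{1,1}$ case and Wu--Yin (Section~9.1) for the cube, which contain exactly the Lax--Milgram existence step, the global $W^{2,2}$ elliptic estimate, and the reflection argument at the right-angled faces that you outline. The only small slip is the reference to Gilbarg--Trudinger Theorem~8.12, which assumes a $C^{2}$ boundary; for $C^{1,1}$ domains one should cite Theorem~9.15 there (or Grisvard), but this does not affect the substance of your argument.
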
 
\item
For the proof we refer to \cite[Theorem~9.15]{GilbargTrudinger} for $ C^{1,1} $-domains 
and to \cite[Section 9.1]{WuYin} for cubes.
\begin{prop}
\label{prop:Hodge-U}
Let $ \Omega \subset \R^{n} $ be a bounded open set with $ C^{1,1} $-boundary or a cube. 
Then for every $ u \in U( \Omega ) $ there exist unique
\[ v \in U( \Omega ) \mbox{ with } \DIV v = 0 
\quad \mbox{and} \quad
\phi \in W^{1,2}_{0}( \Omega ) \cap W^{2,2}( \Omega ) \]
such that $ u = v + \nabla \phi $.
Therefore, we have a decomposition
\[ U( \Omega ) = ( \ker \DIV ) \oplus ( \im \nabla ) \]
into two closed subspaces where here 
\[ \nabla: W^{1,2}_{0}( \Omega ) \cap W^{2,2}( \Omega ) \to W^{1,2}( \Omega ; \R^{n} ). \]
\end{prop}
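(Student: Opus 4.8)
The plan is to construct the Helmholtz/Hodge decomposition directly via the Poisson equation and then verify that the two summands are closed subspaces of $U(\Omega)$. Given $u \in U(\Omega)$, we have $\DIV u \in L^2(\Omega)$, so by Theorem~\ref{theo:Poisson} there is a unique $\phi \in W^{1,2}_0(\Omega) \cap W^{2,2}(\Omega)$ with $\triangle \phi = \DIV u$. Set $v := u - \nabla \phi$. Since $\phi \in W^{2,2}(\Omega)$, we have $\nabla \phi \in W^{1,2}(\Omega;\R^n) \subset BD(\Omega)$ with $\DIV(\nabla\phi) = \triangle\phi = \DIV u \in L^2$, hence $\nabla\phi \in U(\Omega)$ and therefore $v \in U(\Omega)$ as well, with $\DIV v = \DIV u - \triangle\phi = 0$. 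This gives existence of the decomposition $u = v + \nabla\phi$ with $v \in \ker\DIV$ and $\nabla\phi \in \im\nabla$.

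For uniqueness, suppose $v_1 + \nabla\phi_1 = v_2 + \nabla\phi_2$ with $v_i \in U(\Omega)$ divergence-free and $\phi_i \in W^{1,2}_0(\Omega)\cap W^{2,2}(\Omega)$. Then $\nabla(\phi_1 - \phi_2) = v_2 - v_1$ is divergence-free, so $\triangle(\phi_1 - \phi_2) = 0$ with $\phi_1 - \phi_2 \in W^{1,2}_0(\Omega)$; by the uniqueness part of Theorem~\ref{theo:Poisson} (applied with $f = 0$) we get $\phi_1 = \phi_2$ and hence $v_1 = v_2$. This simultaneously shows that the algebraic sum is direct, i.e.\ $(\ker\DIV) \cap (\im\nabla) = \{0\}$: if $\nabla\phi$ is divergence-free with $\phi \in W^{1,2}_0 \cap W^{2,2}$, then $\triangle\phi = 0$ forces $\phi = 0$.

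It remains to check that both subspaces are closed in $U(\Omega)$ and that the decomposition is topological (i.e.\ the projections are bounded), which then yields $U(\Omega) = (\ker\DIV)\oplus(\im\nabla)$ as a direct sum of closed subspaces. The kernel $\ker\DIV$ is closed because $\DIV : U(\Omega) \to L^2(\Omega)$ is bounded by definition of the $U$-norm. For $\im\nabla$, the map $\DIV u \mapsto \phi \mapsto \nabla\phi$ is bounded: by the estimate in Theorem~\ref{theo:Poisson}, $\|\phi\|_{W^{2,2}} \le C\|\DIV u\|_{L^2} \le C\|u\|_{U(\Omega)}$, so $u \mapsto \nabla\phi$ is a bounded linear projection of $U(\Omega)$ onto $\im\nabla$ (it is idempotent since applying the construction to $\nabla\phi$ returns $\nabla\phi$, using that $\phi$ is the unique solution). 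A bounded projection has closed range, and $v = u - \nabla\phi$ is then also obtained by a bounded projection, whose range $\ker\DIV$ is closed. Hence $U(\Omega)$ is the topological direct sum of the two closed subspaces, as claimed.

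The only genuinely delicate point is ensuring the regularity $\phi \in W^{2,2}(\Omega)$ rather than merely $\phi \in W^{1,2}_0(\Omega)$ — this is exactly what Theorem~\ref{theo:Poisson} supplies and is the reason the hypothesis on $\partial\Omega$ ($C^{1,1}$ or a cube) enters; without it one would only get the weaker $H^1$-Hodge decomposition and $\nabla\phi$ need not lie in $U(\Omega)$ with the right structure. Everything else is a routine consequence of linearity, the boundedness of $\DIV$ on $U(\Omega)$, and the a priori estimate.
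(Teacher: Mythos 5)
Your proposal is correct and follows essentially the same route as the paper: solve $\triangle\phi=\DIV u$ with the $W^{2,2}$ regularity and a priori estimate of Theorem~\ref{theo:Poisson}, define the bounded idempotent $u\mapsto\nabla\phi$, and identify its range and kernel to obtain the topological direct sum. The only cosmetic difference is that you spell out the uniqueness and closed-range arguments which the paper leaves implicit in the statement that $P$ is a (bounded) projection.
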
 
\begin{proof}
Let us define a map
\[ P: U( \Omega ) \to U( \Omega ),
\quad
P(u) := \nabla \phi \]
with $ \phi \in W^{2,2}( \Omega ) $ being the unique weak solution of 
\[ \triangle \phi = \DIV u, 
\quad \phi \in W^{1,2}_{0}( \Omega ). \]
$P$ is linear and idempotent. According to Theorem~\ref{theo:Poisson}, also
\[ \| \nabla \phi \|_{U}
= \| \nabla \phi \|_{ L^{1} }
+ \| \nabla \nabla \phi \|_{ M }
+ \| \triangle \phi \|_{ L^{2} }
\le C_{1} \| \phi \|_{ W^{2,2} } 
\le C_{2} \| \DIV u \|_{ L^{2} } 
\le C_{2} \| u \|_{ U }. \]
Therefore, $P$ is a projection. By the definition, $ \im P \subset \nabla \big( W^{1,2}_{0}( \Omega ) \cap W^{2,2}( \Omega ) \big) $.
Since for every $ \phi \in W^{1,2}_{0}( \Omega ) \cap W^{2,2}( \Omega ) $ it holds $ P( \nabla \phi ) = \nabla \phi $, actually
\[ \im P = \nabla \big( W^{1,2}_{0}( \Omega ) \cap W^{2,2}( \Omega ) \big). \]
Moreover, from
\[ \DIV u = 0 \iff \nabla \phi = 0 \iff \phi = 0, \]
it follows that $ \ker P = \ker \DIV $. 
\end{proof}
\item 
According to Lemma~\ref{lemma:equi-int}, for bounded sequences in $ W^{1,p} $, $ p>1 $, 
there exists a modified sequence with $p$-equiintegrable gradients.
For a function from $ U( \Omega ) $, only a part of the symmetrized gradient has a higher integrability.
Still, we may get a similar result where we achieve 2-equiintegrability in that part.
\begin{lemma}
\label{lemma:equi-int-U}
Let $ \Omega \subset \R^{n} $ be an open bounded set with $ C^{1,1} $-boundary 
and let $ \{ u_{j} \}_{ j \in \N } $ be a bounded sequence in $ U( \Omega ) $. 
There exist a subsequence $ \{ u_{ j_{k} } \}_{ k \in \N } $ 
and a sequence $ \{ \tilde{u}_{k} \}_{ k \in \N } \subset U( \Omega ) $ such that 
\begin{itemize}
\item $ \{ ( \DIV \tilde{u}_{k} )^{2} \}_{ k \in \N } $ is equiintegrable,
\item $ \{ u_{j_{k}} - \tilde{u}_{k} \}_{ k \in \N} \subset W^{1,2}( \Omega ; \R^{n} ) $ and therefore $ E^{s} u_{j_{k}} = E^{s} \tilde{u}_{k} $,
\item $ \lim_{k \to \i} \big| \{ \nabla ( \tilde{u}_{k} - u_{ j_{k} } ) \ne 0 \} \cup \{ \tilde{u}_{k} \ne u_{ j_{k} } \} \big| = 0 $.
\end{itemize}
Moreover, if $ \{ u_{j} \}_{j \in \N} $ converges weakly, strictly or $c$-strictly to $ u $ in $ U( \Omega ) $, 
then the $ \tilde{u}_{k} $ can be chosen in such a way that $ \gamma( \tilde{u}_{k} ) = \gamma(u) $ 
and $ \{ \tilde{u}_{k} \}_{k \in \N} $ converges to $ u $ in $ U( \Omega ) $ in the same manner. 
\end{lemma}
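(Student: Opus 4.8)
The strategy is to reduce the $U(\Omega)$-statement to the classical $W^{1,p}$-equiintegration lemma (Lemma~\ref{lemma:equi-int}, alluded to in the excerpt) by isolating the part of the symmetrized gradient that carries the extra integrability, namely the divergence. The Helmholtz decomposition of Proposition~\ref{prop:Hodge-U} is exactly the tool: write $u_{j} = v_{j} + \nabla \phi_{j}$ with $\DIV v_{j} = 0$ and $\phi_{j} \in W^{1,2}_{0}(\Omega) \cap W^{2,2}(\Omega)$ solving $\triangle \phi_{j} = \DIV u_{j}$. Since $\{u_{j}\}$ is bounded in $U(\Omega)$, the elliptic estimate in Proposition~\ref{prop:Hodge-U} gives $\{\phi_{j}\}$ bounded in $W^{2,2}(\Omega)$, hence $\{\nabla \phi_{j}\}$ bounded in $W^{1,2}(\Omega;\R^{n})$. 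Apply Lemma~\ref{lemma:equi-int} to $\{\nabla \phi_{j}\}$: after passing to a subsequence (relabelled by $k$) there is a sequence $\{w_{k}\} \subset W^{1,2}(\Omega;\R^{n})$ with $\{|\nabla w_{k}|^{2}\}$ equiintegrable (so in particular $\{(\DIV w_{k})^{2}\}$ equiintegrable), with $\{\nabla \phi_{j_{k}} - w_{k}\}$ vanishing off a set of measure tending to zero, and (the Acerbi–Fusco-type truncation can be arranged so that) $w_{k} - \nabla \phi_{j_{k}} \in W^{1,\infty}_{0}$ or at least $W^{1,2}_{0}$, so that boundary traces are preserved and $\{w_{k} = \nabla \phi_{j_{k}}\}$ off a small set. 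Now set $\tilde{u}_{k} := v_{j_{k}} + w_{k}$. Then $\DIV \tilde{u}_{k} = \DIV w_{k}$ (since $\DIV v_{j_{k}} = 0$), so the first bullet holds; $u_{j_{k}} - \tilde{u}_{k} = \nabla \phi_{j_{k}} - w_{k} \in W^{1,2}(\Omega;\R^{n})$, giving the second bullet and $E^{s}u_{j_{k}} = E^{s}\tilde{u}_{k}$; and the third bullet follows from the corresponding smallness property of the truncation lemma, since $\{\nabla(\tilde{u}_{k} - u_{j_{k}}) \ne 0\} \cup \{\tilde{u}_{k} \ne u_{j_{k}}\} = \{\nabla(w_{k} - \nabla \phi_{j_{k}}) \ne 0\} \cup \{w_{k} \ne \nabla \phi_{j_{k}}\}$.

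For the convergence addendum: if $u_{j} \to u$ in the weak (resp.\ strict, $c$-strict) sense in $U(\Omega)$, one checks that the Helmholtz decomposition is continuous for these modes. Indeed $\DIV u_{j} \weakly \DIV u$ (resp.\ $\to$) in $L^{2}$ implies $\phi_{j} \to \phi$ weakly (resp.\ strongly) in $W^{2,2}$, hence $\nabla \phi_{j} \to \nabla \phi$ weakly (resp.\ strongly) in $W^{1,2}$, and consequently $v_{j} = u_{j} - \nabla \phi_{j}$ converges to $v = u - \nabla \phi$ in the appropriate mode of $U(\Omega)$ (weak convergence of $Eu_{j}$, convergence of total variations, and convergence of the $c$-integrals all transfer, using that the absolutely-continuous strong-$L^{2}$ correction $\nabla\phi_j \to \nabla\phi$ does not affect the singular parts and interacts continuously with the convex $c$). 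The truncation lemma applied to $\{\nabla \phi_{j_{k}}\}$ can be arranged (as in the $W^{1,p}$ case) so that $w_{k} \to \nabla \phi$ in $W^{1,2}$ in the same mode and $\gamma(w_{k}) = \gamma(\nabla \phi)$; combined with $\gamma(v_{j_{k}})$ being controlled one arranges $\gamma(\tilde{u}_{k}) = \gamma(u)$ and $\tilde{u}_{k} \to u$ in the same mode. (Here one uses that $\gamma$ is strictly continuous, cf.\ the trace theorem quoted above, and that $w_k - \nabla\phi_{j_k} \in W^{1,2}_0$.)

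The main obstacle is the bookkeeping in the convergence addendum, specifically matching boundary traces: the bare truncation lemma only gives $\gamma(w_{k})$ close to $\gamma(\nabla \phi_{j_{k}})$, not equal, and $\gamma(v_{j_{k}})$ is not at our disposal. One fixes this by the same device used elsewhere in the paper (as in the proof of Theorem~\ref{theo:c-strict-density} and in Proposition~\ref{prop:Hodge-U}): modify $\tilde{u}_{k}$ by adding a corrector supported near $\partial\Omega$ that adjusts the trace to $\gamma(u)$ while contributing a vanishing amount in $L^{1}$, in total variation, in $L^{2}$-divergence, and in the $c$-integrals, and without destroying the equiintegrability of $(\DIV \tilde u_k)^2$ or the smallness of the exceptional set; because $w_{k} - \nabla\phi_{j_k}$ already lies in a space where traces can be prescribed, this is a routine but careful gluing. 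The other mild point is that Lemma~\ref{lemma:equi-int} is stated for $\nabla$-gradients of $W^{1,p}$ maps, so one must be sure its truncation can be taken $W^{1,2}$-valued with the correction in $W^{1,2}_0$ (or $W^{1,\infty}$), which is exactly the Acerbi–Fusco Lipschitz-truncation statement; no new idea is needed.
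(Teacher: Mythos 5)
Your proposal follows essentially the same route as the paper's proof: Helmholtz-decompose $u_{j} = v_{j} + \nabla\phi_{j}$ via Proposition~\ref{prop:Hodge-U}, apply the truncation result of Lemma~\ref{lemma:equi-int} to the bounded sequence $\{\nabla\phi_{j}\}$ in $W^{1,2}(\Omega;\R^{n})$, set $\tilde u_{k} := v_{j_{k}} + \tilde w_{k}$, and handle the convergence addendum through the continuity of the decomposition together with the Lipschitz bound on $c$ and the vanishing measure of the exceptional set. Your additional concern about exactly matching $\gamma(\tilde u_{k}) = \gamma(u)$ is legitimate (the paper's own terse proof passes over it, simply taking $\tilde w_{k} \in w + W^{1,2}_{0}(\Omega;\R^{n})$ from Lemma~\ref{lemma:equi-int}), but it does not alter the substance of the argument.
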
 
\begin{proof}
Let us decompose 
\[ u_{j} = v_{j} + \nabla \phi_{j} \]
according to Proposition~\ref{prop:Hodge-U}. Since $ \triangle \phi_{j} = \DIV u_{j} $,
$ \{ \phi_{j} \}_{ j \in \N } $ is a bounded sequence in $ W^{2,2}( \Omega ) $ by Theorem~\ref{theo:Poisson}.
Denote $ w_{j} := \nabla \phi_{j} $.
A suitable subsequence of $ \{ w_{j} \}_{ j \in \N } $ converges weakly in $ W^{1,2}( \Omega ; \R^{n} ) $ to some $w$. 
According to the Lemma~\ref{lemma:equi-int}, there exist a further subsequence
$ \{ w_{ j_{k} } \}_{k \in \N} $ and a sequence $ \{ \tilde{w}_{k} \}_{k \in \N} \subset w + W_{0}^{1,2}( \Omega ; \R^{n} ) $
such that 
\begin{itemize}
\item $ \tilde{w}_{k} \weakly w $ in $ W^{1,2}( \Omega ; \R^{n} ) $, 
\item $ \{ | \nabla \tilde{w}_{k} |^{2} \}_{ k \in \N } $ is equiintegrable,
\item $ \lim_{ k \to \i } | \{ w_{j_{k}} \ne \tilde{w}_{k} \} \cup \{ \nabla w_{j_{k}} \ne \nabla \tilde{w}_{k} \} | = 0 $.  
\end{itemize}
Define 
\[ \tilde{u}_{k} := v_{ j_{k} } + \tilde{w}_{k}. \]
It is a bounded sequence in $ U( \Omega ) $ that has the desired properties.
\item
As for supplement: If $ u_{j} \weakly u $, then also the related projections weakly converge,
and we continue as above. For the ($c$-)strict convergence we assess, employing Lipschitz continuity of $c$,
\begin{eqnarray*}
\left| \int_{ \Omega } c( E u_{j_{k}} ) - \int_{ \Omega } c( E \tilde{u}_{k} ) \right|
&  =  & \left| \int_{ \Omega } \big( c( \E u_{j_{k}}(x) ) - c( \E \tilde{u}_{k}(x) ) \big) \x \right| \\
& \le & \int_{ \Omega } L \big| \E u_{j_{k}}(x) - \E \tilde{u}_{k}(x) \big| \x \\
&  =  & \int_{ \{ \E w_{j_{k}} \ne \E \tilde{w}_{k} \} } L \big| \E w_{j_{k}}(x) - \E \tilde{w}_{k}(x) \big| \x.
\end{eqnarray*}
If $ k \to \i $, the last expression converges to 0 since $ \{ \E w_{j_{k}} - \E \tilde{w}_{k} \}_{ k \in \N } $ is bounded in $ L^{2} $
and is thus equiintegrable. Similarly for $ E_{\dev} $. 
\end{proof}
\begin{remark}
Young measures offer a possibility to describe weak convergence more precisely.
Namely, a highly oscillatory sequence may converge to a constant, 
which clearly does not contain any information about the members.
For integral functionals that we explore, gradient (or better symmetrized-gradient) Young measures should be considered.
While sufficient for $ p > 1 $, 
the theory must be generalized for $ p = 1 $ since beside oscillations also concentrations must be incorporated.
The concept dates back to the article \cite{DiPernaMajda} from DiPerna and Majda where they cope with a specific problem.
The frame-work for the general theory was set in \cite{AlibertBouchitte}. 
For the full-gradient case, the corresponding generalized gradient Young measures generated by sequences in $ BV $
were identified in \cite{KristensenRindler-Young}. 
Recently, also the symmetrized-gradient case was resolved, see \cite{DePhilippisRindler:17}.
The right analogue concept for the Hencky plasticity would contain measures generated by sequences in $U$.
By the decomposition lemma~\ref{lemma:equi-int-U}, the generating sequence may be taken to have 2-equiintegrable divergences. 
\end{remark}
\end{trivlist}
\section{Homogenization, vanishing hardening and commutability}
\label{section:Hencky-commutability}
\begin{trivlist}
\item
We now turn to the proofs of our main results. Having extended the notion of homo\-genization to functions with Hencky plasticity growth in Section~\ref{subsection:Hencky-setting}, we will focus on the homo\-genization of the energy functional for the zero-hardening case, which is the core of our analysis. In Section~\ref{section:Hencky-limsup} we construct a recovery sequence, drawing on known results for densities with linear growth, cf.\ \cite{KristensenRindler-Relax} and Theorem~\ref{theo:BD-Alberti}. The sequence will converge in the $ \langle \cdot \rangle $-strict topology so that we will be allowed to apply the Reshetnyak continuity theorem. We will have to handle the recession function with special attention. For that purpose we will derive a rather technical assessment on the Lipschitz constant.
\item
Regarding the $ \liminf $-inequality, 
the approach from \cite{AttouchButtazzoMichaille} with the slicing method of De Giorgi can be adapted for regular points, where, however, we will have to take care of the divergence with the help of Theorem~\ref{theo:Galdi}. Therefore, we will have to move the analysis form $ L^{1} $ to $ L^{q} $, $q > 1$. By Corollary~\ref{cor:BD-approx-diff-L1*} it is still possible to employ approximate differentiability. 
To control also the singular points, 
we will make use of our asymptotic convexity condition, which will enable the use of the results from \cite{DemengelQi}.
Let us mention that our analysis does not need extra regularity properties such as the domain of the convex conjugate of the homogenized density being closed, which is tacitly assumed in \cite{DemengelQi}.   
\item
Lastly, we will re-examine a special case: the relaxation problem. 
In \cite{KirchheimKristensen} new results regarding automatic convexity of 1-homogeneous functions with enough convex directions were shown.
Instead of the asymptotic convexity, we will impose a reasonable growth condition and show an analogous relaxation result.
\end{trivlist}
\subsection{Setting and homogenized density}
\label{subsection:Hencky-setting}
\begin{trivlist}
\item
First, let us concisely repeat the setting and explain the known facts. 
Throughout we will consider a Carath\'{e}odory function $ f : \R^{n} \times \M{n}{n}_{\sm} \to \R $ 
that is 
\begin{itemize}
\item 
$ \Y^{n} $-periodic in the first variable, 
\item 
has growth properties typical for the densities in the Hencky plasticity, 
i.e., there exist $ \alpha , \beta > 0 $ such that for all $ x \in \Omega $ and $ X \in \M{n}{n}_{\sm} $
\[ \alpha( | X_{\dev} | + ( \tr X )^{2} ) \le f( x , X ) \le \beta( | X_{\dev} | + ( \tr X )^{2} + 1 ). \]
\end{itemize}
For any $ \delta \ge 0 $ we define
\[ f^{(\delta)} : \R^{n} \times \M{n}{n}_{\sm} \to \R,
\quad
f^{(\delta)}( x , X ) := f( x , X ) + \delta | X_{\dev} |^{2}. \]
We will investigate the integral functionals $ \F_{\eps} $ and $ \F_{\eps}^{( \delta )} $, $ \delta \ge 0 $, on $ L^{1}( \Omega ; \R^{n} ) $
defined as
\[ \F_{\eps}(u) := \left\{ \begin{array}{ll}
\int_{ \Omega } f( \tfrac{x}{ \eps } , \E u(x) ) \x, & u \in LU( \Omega ; \R^{n} ), \\
\i, & {\rm else,}
\end{array} \right. \]
and
\[ \F^{(\delta)}_{\eps}(u) := \left\{ \begin{array}{ll}
\int_{ \Omega } f^{(\delta)}( \tfrac{x}{ \eps } , \E u(x) ) \x, & u \in W^{1,2}( \Omega ; \R^{n} ), \\
\i, & {\rm else}.
\end{array} \right. \]
\item
Obviously, for any $ u \in L^{1}( \Omega ; \R^{n} ) $ we have
\[ \F^{(\delta)}_{\eps}(u) \searrow \F^{(0)}_{\eps}(u)
\quad \mbox{as} \quad
\delta \to 0. \]
Then also
\[ \Gamma( L^{1} ) \mbox{-} \lim_{ \delta \to 0 } \F^{ (\delta) }_{ \eps }
= \lsc \F^{ (0) }_{ \eps }
= \lsc \F_{ \eps }. \]
The last equality follows from the fact that $ W^{1,2}( \Omega ; \R^{n} ) $, even $ C^{\i}( \overline{ \Omega } ; \R^{n} ) $,
is dense in $ ( LU( \Omega ; \R^{n} ) , \|.\|_{U} ) $, 
and can be proved by the same strategy as the one at the beginning of Subsection~\ref{subsection:U}.
\item
For $ \delta > 0 $ we may apply Proposition~\ref{prop:representation-formula} and Corollary~\ref{cor:hom-Garding}.
Hence, for every $ u \in L^{2}( \Omega ; \R^{n} ) $
\[ \Gamma( L^{2} ) \mbox{-} \lim_{ \eps \to 0 } \F^{ (\delta) }_{ \eps }(u)
= \F^{ (\delta) }_{ \hom }(u) \]
where 
\[ \F^{(\delta)}_{\hom}(u) := \left\{ \begin{array}{ll}
\int_{ \Omega } f^{(\delta)}_{ \hom }( \E u(x) ) \x, & u \in W^{1,2}( \Omega ; \R^{n} ), \\
\i, & {\rm else},
\end{array} \right. \]
has density
\[ f^{(\delta)}_{\hom}(X) 
= \inf_{ k \in \N } \inf_{ \varphi \in W^{1,2}_{0}( k \Y^{n} ; \R^{n} ) } \frac{1}{ k^{n} } \int_{ k \Y^{n} } f^{(\delta)}( x , X + \E \varphi(x) ) \x. \]
From the lower bound and Korn's inequality,
it follows that this family $ \Gamma $-converges even with respect to the $ L^{1} $-norm on the whole $ L^{1}( \Omega ; \R^{n} ) $. 
Schematically, so far we have
\begin{equation}
\label{eq:Hencky-Gamma-begin}
\begindc{\commdiag}[500]
\obj(0,2)[aa]{$ \F^{ (\delta) }_{\eps} $}
\obj(2,2)[bb]{$ \F^{ (0) }_{\eps} $}
\obj(0,0)[cc]{$ \F^{ (\delta) }_{\hom} $}
\mor{aa}{bb}{ptw. falling}
\mor{aa}{cc}{$ \Gamma $}
\enddc
\qquad \mbox{and} \qquad
\begindc{\commdiag}[500]
\obj(0,2)[aa]{$ \F^{ (\delta) }_{\eps} $}
\obj(2,2)[bb]{$ \lsc \F^{ (0) }_{\eps} = \lsc \F_{\eps}$}
\obj(0,0)[cc]{$ \F^{ (\delta) }_{\hom} $}
\mor{aa}{bb}{$ \Gamma $}
\mor{aa}{cc}{$ \Gamma $}
\enddc
\end{equation}
\item
We expect the density of the homogenized functional to have an analogous form also for $f$ 
(of course, with an additional singular term).
Therefore, we define
\[ f_{\hom}(X) 
:= \inf_{k \in \N} \inf_{ \varphi \in C_{c}^{\i}( k \Y^{n} ; \R^{n} ) } \frac{1}{ k^{n} } \int_{ k \Y^{n} } f( x , X + \E \varphi(x) ) \x. \]
By the reasoning in Subsection~\ref{subsection:U},
\[ f_{\hom}(X) 
= \inf_{k \in \N} \inf_{ \varphi \in LU_{0}( k \Y^{n} ) } \frac{1}{ k^{n} } \int_{ k \Y^{n} } f( x , X + \E \varphi(x) ) \x. \]
Let us explore the properties of the new function.
Since by \cite[Theorem 9.8]{Dacorogna}
\[ \inf_{ \varphi \in C_{c}^{\i}( k \Y^{n} ; \R^{n} ) } \int_{ k \Y^{n} } f( x , X + \E \varphi(x) ) \x
= \inf_{ \varphi \in C_{c}^{\i}( k \Y^{n} ; \R^{n} ) } \int_{ k \Y^{n} } f^{\qcls}( x , X + \E \varphi(x) ) \x, \]
it follows immediately $ ( f^{\qcls} )_{\hom} = f_{\hom} $.
This function is, as in the case with standard growth, (symmetric-)quasiconvex which can be argued with the regularization by hardening. 
Namely, $ f^{(\delta)} $ are symmetric-quasiconvex for $ \delta > 0 $,
and $ f^{(\delta)}_{\hom} $ is descending as $ \delta \searrow 0 $.
Moreover, 
\begin{eqnarray*} 
\inf_{ \delta > 0 } f^{(\delta)}_{\hom}(X)
& = & \inf_{ \delta > 0 } \inf_{k \in \N} \inf_{ \varphi \in C_{c}^{\i}( k \Y^{n} ; \R^{n} ) } \frac{1}{ k^{n} } \int_{ k \Y^{n} } f^{(\delta)}( x , X + \E \varphi(x) ) \x \\
& = & \inf_{k \in \N} \inf_{ \varphi \in C_{c}^{\i}( k \Y^{n} ; \R^{n} ) } \inf_{ \delta > 0 } \frac{1}{ k^{n} } \int_{ k \Y^{n} } f^{(\delta)}( x , X + \E \varphi(x) ) \x \\
& = & \inf_{k \in \N} \inf_{ \varphi \in C_{c}^{\i}( k \Y^{n} ; \R^{n} ) } \frac{1}{ k^{n} } \int_{ k \Y^{n} } f( x , X + \E \varphi(x) ) \x \\
& = & f_{\hom}( X ),
\end{eqnarray*}
where we employed the dominated convergence theorem (for arbitrary sequence $ \delta_{j} \searrow 0 $). 
Hence, again applying the same theorem, the function $ f_{\hom} $ is symmetric-quasiconvex.
\item
The typical ergodic formula will follow from the theory on subadditive processes from \cite{LichtMichaille}. 
Let $ {\cal B}_{b}( \R^{n} ) $ denote the set of all bounded Borel subsets of $ \R^{n} $.
For $ X \in \M{n}{n}_{\sm} $ and $ A \in {\cal B}_{b}( \R^{n} ) $, let us define
\[ m_{X}( A ) := 
\inf_{ \varphi \in C_{c}^{\i}( \Int A ; \R^{n} ) } \int_{ \Int A } f( x , X + \E \varphi(x) ) \x. \]
Then the mapping
\[ m_{X} : {\cal B}_{b}( \R^{n} ) \to \R \]
fulfils the assumptions of \cite[Theorem 2.1]{LichtMichaille}. 
Thus, for every open bounded convex set $ A $ and $ \eps_{k} \searrow 0 $ 
\begin{eqnarray*}
\lim_{ k \to \i } \frac{ m_{X}( \eps_{k}^{-1} A ) }{ | \eps_{k}^{-1} A | }
& = & \lim_{ k \to \i } \inf \left\{ 
\frac{1}{ | \eps_{k}^{-1} A | } \int_{ \eps_{k}^{-1} A } f( x , X + \E \varphi(x) ) \x:
\varphi \in C_{c}^{\i}( \eps_{k}^{-1} A ; \R^{n} ) \right\} \\
& = & f_{\hom}(X). 
\end{eqnarray*}
\item
Let us gather the results (and additional properties).
\begin{prop}
\label{prop:hom-density-U}
Let $ f : \R^{n} \times \M{n}{n}_{\sm} \to \R $ be a Carath\'{e}odory function that is $ \Y^{n} $-periodic in the first variable and
fulfils for some $ 0 < \alpha < \beta $
\[ \alpha( | X_{\dev} | + ( \tr X )^{2} ) \le f(x,X) \le \beta ( | X_{\dev} | + ( \tr X )^{2} + 1 ) \]
for a.e.~$ x \in \R^{n} $ and all $ X \in \M{n}{n}_{\sm} $. Then the homogenized function
\[ f_{\hom}(X) := \inf_{ k \in \N } \inf_{ \varphi \in C_{c}^{\i}( k \Y^{n} ; \R^{n} ) } 
\frac{1}{ k^{n} } \int_{ k \Y^{n} } f( x , X + \E \varphi(x) ) \x \]
is well-defined, satisfies the same growth condition, and for every open bounded convex set $ A $ and $ \eps_{k} \searrow 0 $ it holds
\[ f_{\hom}(X) 
= \lim_{ k \to \i } \inf \left\{ 
\frac{1}{ | \eps_{k}^{-1} A | } \int_{ \eps_{k}^{-1} A } f( x , X + \E \varphi(x) ) \x:
\varphi \in C_{c}^{\i}( \eps_{k}^{-1} A ; \R^{n} ) \right\}. \]
Moreover, $ f_{\hom} $ is symmetric-quasiconvex and
\[ ( f^{\qcls} )_{\hom} = f_{\hom}. \]
\end{prop}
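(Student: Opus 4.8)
The plan is to verify each assertion of Proposition~\ref{prop:hom-density-U} in turn, reusing the computations already carried out in this subsection.

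\textbf{Well-definedness and growth.} First I would note that $f_{\hom}(X)$ is an infimum over a nonempty set (take $\varphi \equiv 0$), so $f_{\hom}(X) \le \beta(|X_{\dev}| + (\tr X)^2 + 1)$ by the upper bound in \eqref{eq:Hencky-growth}. For the lower bound, I would use that $\DIV \varphi$ integrates to zero over $k\Y^n$ for $\varphi \in C_c^\infty(k\Y^n;\R^n)$, so $\int_{k\Y^n} \tr(X + \E\varphi) \, dx = k^n \tr X$; combining this with the convexity of $t \mapsto t^2$ and Jensen's inequality gives $\frac{1}{k^n}\int_{k\Y^n} (\tr(X+\E\varphi))^2 \, dx \ge (\tr X)^2$. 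For the deviatoric term one cannot argue pointwise, but the quasiconvexity already established (via the hardening regularization) together with the lower bound $f \ge \alpha(|X_{\dev}| + (\tr X)^2)$ yields $f_{\hom}(X) \ge \alpha(|X_{\dev}| + (\tr X)^2)$: indeed $f_{\hom} \ge \alpha(|\cdot_{\dev}| + (\tr\cdot)^2)_{\hom}$, and the right-hand side, being the homogenization of a convex function, equals the function itself (or one directly invokes symmetric-quasiconvexity of $f_{\hom}$ and tests against affine boundary data). Thus $f_{\hom}$ inherits \eqref{eq:Hencky-growth}.

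\textbf{The ergodic formula.} This is already done in the two displayed computations preceding the proposition: the set function $m_X$ is shown to be a subadditive process in the sense of \cite[Theorem 2.1]{LichtMichaille}, and the cited theorem delivers the limit $\lim_{k\to\infty} m_X(\eps_k^{-1}A)/|\eps_k^{-1}A| = f_{\hom}(X)$ for every open bounded convex $A$ and every $\eps_k \searrow 0$. I would simply record that the hypotheses of that theorem (measurability, subadditivity, and the covariance/periodicity under integer translations, together with the growth bound ensuring the relevant integrability) are met by $m_X$, referring back to those computations.

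\textbf{Quasiconvexity and $(f^{\qcls})_{\hom} = f_{\hom}$.} These two facts are likewise contained in the preceding discussion: the identity $(f^{\qcls})_{\hom} = f_{\hom}$ follows from \cite[Theorem 9.8]{Dacorogna}, which equates the infimum of $\int f(x, X+\E\varphi)$ over $\varphi \in C_c^\infty(k\Y^n;\R^n)$ with the corresponding infimum of $\int f^{\qcls}(x, X+\E\varphi)$; and symmetric-quasiconvexity of $f_{\hom}$ follows from writing $f_{\hom} = \inf_{\delta>0} f^{(\delta)}_{\hom}$ (the displayed four-line computation, valid by dominated convergence), noting each $f^{(\delta)}_{\hom}$ is symmetric-quasiconvex for $\delta > 0$ by the standard-growth homogenization theory, and passing to the limit in the quasiconvexity inequality again by dominated convergence.

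\textbf{Main obstacle.} The only genuinely delicate point is the deviatoric lower bound for $f_{\hom}$, since linear growth in $X_{\dev}$ precludes a naive Jensen argument and one must route through the already-established symmetric-quasiconvexity (equivalently, through the convex-homogenization identity for the minorant $\alpha(|\cdot_{\dev}| + (\tr\cdot)^2)$). Everything else is a matter of collecting the estimates and citations already assembled above.
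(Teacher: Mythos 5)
Your proposal is correct and follows essentially the same route as the paper: the proposition is stated there as a summary of the immediately preceding discussion (the Licht--Michaille subadditive ergodic theorem for the limit formula, Dacorogna's relaxation theorem for $(f^{\qcls})_{\hom}=f_{\hom}$, and the identity $f_{\hom}=\inf_{\delta>0}f^{(\delta)}_{\hom}$ plus monotone/dominated convergence for symmetric-quasiconvexity). The one point the paper leaves implicit, the growth condition, you handle correctly: the upper bound via $\varphi\equiv 0$ and the lower bound via monotonicity of homogenization together with the fact that the convex minorant $\alpha(|X_{\dev}|+(\tr X)^2)$ is invariant under homogenization by Jensen's inequality (since $\int_{k\Y^n}\E\varphi\,dx=0$).
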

%
\item 
Let us apply this to our question. Define the functionals
\begin{equation}
\label{eq:def-G}
\G(u) := \left\{ \begin{array}{ll}
\int_{ \Omega } f_{ \hom }( \E u(x) ) \x, & u \in LU( \Omega ; \R^{n} ), \\
\i, & {\rm else,}
\end{array} \right.
\end{equation}
and $ \G^{(0)} $ by reducing the domain of $ \G $ to $ W^{1,2}( \Omega ; \R^{n} ) $.
By the discussion above, for any $ \delta_{j} \searrow 0 $
we have the pointwise convergence $ ( f^{( \delta_{j} )} )_{ \hom } \searrow f_{ \hom } $.
It clearly follows
\[ \F^{(\delta_{j})}_{\hom}(u) \searrow \G^{(0)}(u) \]
for every $ u \in L^{1}( \Omega ; \R^{n} ) $.
Having a non-increasing sequence, it holds furthermore
\begin{equation}
\label{eq:Hencky-Gamma-delta}
\Gamma( L^{1} ) \mbox{-} \lim_{ \delta \to 0 } \F^{ (\delta) }_{ \hom }
= \lsc \G^{ (0) }
= \lsc \G 
\end{equation}
where for the second equality we argue as in Subsection~\ref{subsection:U}.
From $ \F^{(\delta)}_{\eps} \ge \F^{(0)}_{\eps} $ for every $ \delta > 0 $, it follows
\[ \F^{(\delta)}_{\hom} \ge \Gamma( L^{1} ) \mbox{-} \limsup_{ \eps \to 0 } \F^{(0)}_{\eps}. \]
The right-hand side is, being $ \Gamma $-$ \limsup $, lower semicontinuous. Hence, sending $ \delta \to 0 $ yields
\[ \lsc \G^{(0)} \ge \Gamma( L^{1} ) \mbox{-} \limsup_{ \eps \to 0 } \F^{(0)}_{\eps}. \]
Thus
\begin{equation}
\label{eq:Hencky-limsup}
\lsc \G
= \lsc \G^{(0)} 
\ge \Gamma( L^{1} ) \mbox{-} \limsup_{ \eps \to 0 } \F^{(0)}_{\eps}
\ge \Gamma( L^{1} ) \mbox{-} \limsup_{ \eps \to 0 } \F_{\eps}.
\end{equation}
\end{trivlist}
\subsection{$ \langle \cdot \rangle $-strict continuity and recovery sequences}
\label{section:Hencky-limsup}
\begin{trivlist}
\item
Our main objective in this paragraph is to prove Proposition~\ref{prop:c-strict-continuity}. 
Before doing so, we show how this result yields the $ \limsup $-inequality in Theorem~\ref{theo:Hencky-epilog}. 
We start with a couple of remarks on $f_{\dev}$ and its recession function $( f_{\dev} )^{\i}$
(defined in Proposition~\ref{prop:c-strict-continuity} or, for an arbitrary function, below). 
\begin{remark}
\label{remark:c-strict-continuity}
\begin{enumerate}
\item[]
\item
Since the deviatoric symmetric rank-one matrices span the whole $ \M{n}{n}_{\dev} $, 
the function $ f_{\dev} $ is globally Lipschitz in the second variable, i.e., there exists a constant $C$, depending only on $n$ and $\beta$,
such that
\[ | f_{\dev}( x , X ) - f_{\dev}( x , Y ) | \le C | X - Y | \]
for all $ x \in \Omega $ and $ X,Y \in \M{n}{n}_{\dev} $. See Lemma~\ref{lemma:BKK}.
\item
Consequently, the recession function is simply
\[ ( f_{\dev} )^{\i}( x_{0} , P_{0} ) = \limsup_{ t \to \i } \frac{ f_{\dev}( x_{0} , t P_{0} ) }{t}. \] 
For every $ P_{0} \in \M{n}{n}_{\dev} $ symmetric rank-one, this $ \limsup $ is even a limit (or a supremum)
due to the convexity in the direction of $ P_{0} $
\begin{eqnarray*} 
\limsup_{ t \to \i } \frac{ f_{\dev}( x_{0} , t P_{0} ) }{t} 
& = & \lim_{ t \to \i } \frac{ f_{\dev}( x_{0} , t P_{0} ) - f_{\dev}( x_{0} , 0 ) }{t} \\
& = & \sup_{ t > 0 } \frac{ f_{\dev}( x_{0} , t P_{0} ) - f_{\dev}( x_{0} , 0 ) }{t}. 
\end{eqnarray*}
\item
We will apply Proposition~\ref{prop:c-strict-continuity} to an $x$-independent function. 
The continuity assumption on $ ( f_{\dev} )^{\i} $ is in that case trivially fulfilled.
\end{enumerate}
\end{remark}
\begin{proof}[Proof of Theorem~\ref{theo:Hencky-epilog}, part~1: recovery sequence.] 
Let $ \G $ be as in \eqref{eq:def-G}. 
$ \G|_{ LU( \Omega ) } $ has a symmetric-quasiconvex density $ f_{\hom} $.
Therefore, it is by Proposition~\ref{prop:c-strict-continuity} continuous in the $ \langle \cdot \rangle $-strict topology
with 
\[ \overline{\G}(u) 
:= \int_{ \Omega } f_{\hom}( \E u(x) ) \x + \int_{\Omega} \big( ( f_{\hom} )_{\dev} \big)^{\i} \big( \tfrac{ d E^{s} u }{ d | E^{s} u |}(x) \big) \ d | E^{s} u |(x) \]
being its continuous extension to $ U( \Omega ) $ in this topology.
By Theorem~\ref{theo:c-strict-density}, for every $ u \in U( \Omega ) $ 
there exists a sequence $ \{ u_{j} \}_{j \in \N} \subset C^{\i}( \Omega ; \R^{n} ) \cap LU( \Omega ) $
such that 
\[ u_{j} \gre{ \langle \cdot \rangle } u \mbox{ in } U( \Omega ) 
\quad \mbox{and consequently} \quad
\G( u_{j} ) \to \overline{\G}(u). \]
Hence,
\begin{equation}
\label{eq:Hencky-density}
( \lsc \G )|_{ U( \Omega ) } \le \overline{ \G }.
\end{equation}
By Remark~\ref{remark:c-strict-continuity}, it holds $ \big( ( f_{\hom} )_{\dev} \big)^{\i} = ( f_{\hom} )^{\#}|_{ \M{n}{n}_{\dev} } $.
Therefore, by \eqref{eq:Hencky-density}
\[ \lsc \G \le \F_{\hom}. \]
Employing \eqref{eq:Hencky-limsup}, we arrive at 
\[ \Gamma( L^{1} ) \mbox{-} \limsup_{ \eps \to 0 } \F_{\eps} \le \F_{\hom}. \qedhere \]
\end{proof}
\item
For our proof of Proposition~\ref{prop:c-strict-continuity} we will adapt the strategy in \cite[Section 3]{KristensenRindler-Relax} to our purposes, where in adition we have to carefully handle the quadratic growth in the trace direction. According to Theorem~\ref{theo:c-strict-density} and Remark~\ref{remark:oglatoklepaj}, 
we may approximate every function in $ U( \Omega ) $ with smooth functions in the $ \langle \cdot \rangle $-strict topology.
Therefore, the following form of the Reshetnyak continuity theorem (see \cite[Theorem 5]{KristensenRindler-Relax})
is applicable:
\begin{theo}[Reshetnyak continuity theorem]
\label{theo:Reshetnyak}
Let $ f \in { \mathbf E }( \Omega ; \R^{N} ) $ and
\[ \mu_{j} \gre{*} \mu 
\quad \mbox{ in } M( \Omega ; \R^{N} ) 
\quad \mbox{and} \quad
\langle \mu_{j} \rangle ( \Omega ) \to \langle \mu \rangle ( \Omega ). \]
Then
\begin{align*} 
& \lim_{ j \to \i } \left[ \int_{ \Omega } f \left( x , \frac{ d \mu_{j}^{a} }{ d \L^{n} }(x) \right) \x 
+ \int_{ \Omega } f^{\i} \left( x , \frac{ d \mu_{j}^{s} }{ d | \mu_{j}^{s} | }(x)\right) \ d | \mu_{j}^{s} |(x) \right] = \\
& = \int_{ \Omega } f \left( x , \frac{ d \mu^{a} }{ d \L^{n} }(x) \right) \x 
+ \int_{ \Omega } f^{\i} \left( x , \frac{ d \mu^{s} }{ d | \mu^{s} | }(x)\right) \ d | \mu^{s} |(x). 
\end{align*}
\end{theo}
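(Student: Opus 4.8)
The plan is to reduce the statement to the classical Reshetnyak continuity theorem for \emph{positively $1$-homogeneous} integrands by a perspective-function (one-homogenization) lift, and then to prove that $1$-homogeneous case by lifting the measures to a sphere. To $\mu \in M(\Omega;\R^{N})$ I would associate the lifted measure $\hat\mu := (\mu,\L^{n}) \in M(\Omega;\R^{N+1})$. Its Lebesgue decomposition with respect to $\L^{n}$ reads $\hat\mu = \big(\tfrac{d\mu^{a}}{d\L^{n}},1\big)\,\L^{n} + \big(\tfrac{d\mu^{s}}{d|\mu^{s}|},0\big)\,|\mu^{s}|$, whence $|\hat\mu| = \sqrt{1+\big|\tfrac{d\mu^{a}}{d\L^{n}}\big|^{2}}\,\L^{n} + |\mu^{s}| = \langle\mu\rangle$ in the sense of the calculus for convex functions of measures recalled in Section~\ref{section:Preliminaries} (with $\langle X\rangle=\sqrt{1+|X|^{2}}$, so that $\langle\cdot\rangle^{\i}(Y)=|Y|$). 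Since the last component of $\hat\mu_{j}$ equals $\L^{n}$ for \emph{every} $j$, the hypotheses $\mu_{j}\weakstar\mu$ and $\langle\mu_{j}\rangle(\Omega)\to\langle\mu\rangle(\Omega)$ translate verbatim into $\hat\mu_{j}\weakstar\hat\mu$ in $M(\Omega;\R^{N+1})$ together with $|\hat\mu_{j}|(\Omega)\to|\hat\mu|(\Omega)$. A short argument (extend the measures trivially to $\bar\Omega$, extract a weak-$*$ limit there, and play lower semicontinuity of the total variation against the equal-mass hypothesis) shows that no mass escapes to $\partial\Omega$, so one may henceforth work in $M(\bar\Omega;\,\cdot\,)$.

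Next I would introduce the perspective integrand of $f\in\mathbf{E}(\Omega;\R^{N})$: the function $\bar f:\Omega\times\R^{N+1}\to\R$, positively $1$-homogeneous in the second slot, given by $\bar f(x,(z,t)):=t\,f(x,z/t)$ for $t>0$ and $\bar f(x,(z,t)):=f^{\i}(x,z)$ for $t\le 0$ (consistent with $1$-homogeneity because $f^{\i}$ is positively $1$-homogeneous in $z$). The key point --- and the only place where membership in the class $\mathbf{E}$ is used --- is that $\bar f$ is \emph{jointly} continuous on $\Omega\times\R^{N+1}$, in particular across $\{t=0\}$, and extends continuously to $\bar\Omega\times\R^{N+1}$: this is precisely the statement that $f^{\i}(x,z)=\lim_{s\to\i}f(x,sz)/s$ is an honest limit, continuous in $(x,z)$ and gluing continuously to $f$, which is the defining property of $\mathbf{E}$. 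Using $1$-homogeneity of $\bar f$ and the decomposition of $\hat\mu_{j}$ one checks the identity
\[ \int_{\Omega} f\big(x,\tfrac{d\mu_{j}^{a}}{d\L^{n}}\big)\x + \int_{\Omega} f^{\i}\big(x,\tfrac{d\mu_{j}^{s}}{d|\mu_{j}^{s}|}\big)\,d|\mu_{j}^{s}| = \int_{\bar\Omega} \bar f\big(x,\tfrac{d\hat\mu_{j}}{d|\hat\mu_{j}|}\big)\,d|\hat\mu_{j}|, \]
and the same identity for $\mu$. Thus it suffices to prove the convergence of the right-hand sides, i.e.\ the Reshetnyak continuity theorem for the positively $1$-homogeneous continuous integrand $\bar f$.

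For that $1$-homogeneous case I would lift once more, now to the unit sphere $S^{N}\subset\R^{N+1}$. Writing $\lambda_{j}:=|\hat\mu_{j}|$ and $g_{j}:=\tfrac{d\hat\mu_{j}}{d|\hat\mu_{j}|}:\Omega\to S^{N}$, push forward to $\Theta_{j}:=(\id,g_{j})_{\#}\lambda_{j}$, a nonnegative measure on the compact set $\bar\Omega\times S^{N}$ with $\int_{\bar\Omega\times S^{N}}\bar f\,d\Theta_{j}=\int_{\bar\Omega}\bar f(x,g_{j}(x))\,d\lambda_{j}(x)$. The masses $\Theta_{j}(\bar\Omega\times S^{N})=\lambda_{j}(\Omega)$ are bounded, so a subsequence converges weak-$*$ to some $\Theta$. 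Disintegrating $\Theta=\int_{\bar\Omega}\Theta_{x}\,d\lambda(x)$, testing against functions of $x$ alone gives $\lambda=|\hat\mu|$ (lower semicontinuity plus equal mass), and testing against $\phi(x)\,s$ with $\phi\in C_{c}(\Omega)$ identifies the barycenter $b(x):=\int_{S^{N}}s\,d\Theta_{x}(s)$ through $b\,\lambda=\hat\mu$, i.e.\ $b=\tfrac{d\hat\mu}{d|\hat\mu|}=:g$ $|\hat\mu|$-a.e. Since $\Theta_{x}$ is a probability measure on $S^{N}$ and $|b(x)|=|g(x)|=1$, the equality case in the Cauchy--Schwarz/triangle inequality forces $\Theta_{x}=\delta_{g(x)}$ for $\lambda$-a.e.\ $x$; as $\Theta$ is thereby uniquely determined, the whole sequence converges to it. Finally $\bar f$ is bounded and continuous on the compact set $\bar\Omega\times S^{N}$, so $\int\bar f\,d\Theta_{j}\to\int\bar f\,d\Theta=\int_{\bar\Omega}\bar f(x,g(x))\,d|\hat\mu|(x)$, which by the identity above is exactly the right-hand side of the assertion.

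The main obstacle is the joint continuity of $\bar f$ up to $\{t=0\}$ and up to $\partial\Omega$ in the second step: this is the technical heart of the matter and is precisely what the hypothesis $f\in\mathbf{E}(\Omega;\R^{N})$ delivers, so the argument stands or falls with a careful use of that definition. The only other non-routine point is the equality case in the triangle inequality on the sphere used to pin down $\Theta_{x}$ as a Dirac mass; everything else --- the change-of-variables identities, the extraction of weak-$*$ limits, the absence of boundary concentration --- is standard measure-theoretic bookkeeping.
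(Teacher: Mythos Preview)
The paper does not prove this theorem; it is quoted verbatim from \cite[Theorem~5]{KristensenRindler-Relax} and used as a black box. Your argument---lifting $\mu$ to $\hat\mu=(\mu,\L^{n})$ so that $|\hat\mu|=\langle\mu\rangle$, passing to the positively $1$-homogeneous perspective integrand $\bar f$, and then proving the $1$-homogeneous Reshetnyak continuity statement by pushing forward to the sphere and using extremality of unit vectors to force the disintegration to be Dirac---is correct and is in fact essentially the proof given in that reference (and, earlier, in Spector and in the appendix of \cite{KristensenRindler-Relax}). One small cosmetic point: since the polar $\tfrac{d\hat\mu}{d|\hat\mu|}$ takes values only in the closed upper hemisphere $\{t\ge 0\}$, you never actually need $\bar f$ on $\{t<0\}$, so there is no need to worry about consistency of the extension there; the continuity of $\bar f$ on $\overline{\Omega}\times\{t\ge 0\}\cap S^{N}$ is precisely equivalent to the defining property of the class~$\mathbf{E}$.
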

\item 
Let us explain the denotations from the theorem. 
The recession function of some function $ f : \Omega \times \R^{N} \to \R $ is defined as
\[ f^{\i}( x_{0} , X_{0} ) := \limsup_{ X \to X_{0}, \ t \to \i } \frac{ f( x_{0} , t X ) }{t}. \]
\item
A continuous function $ f : \Omega \times \R^{N} \to \R $ 
belongs to $ { \mathbf E }( \Omega ; \R^{N} ) $ if the function
\[ Tf : \Omega \times B_{1}(0) \to \R,
\quad 
( Tf )( x , \check{X} ) := ( 1 - | \check{X} | ) f( x , \tfrac{ \check{X} }{ 1 - | \check{X} | } ), \]
has a bounded continuous extension to $ \overline{ \Omega \times B_{1}(0) } $. 
For these functions the recession function is actually the limit
\[ f^{\i}( x_{0} , X_{0} ) = \lim_{ \scriptsize{ \begin{array}{c}  x \to x_{0}, \\ X \to X_{0}, \\ t \to \i \end{array} } } \frac{ f( x , t X ) }{t} \]
and agrees on $ \Omega \times \partial B_{1}(0) $ with the extension of $Tf$.
With functions from $ { \mathbf E }( \Omega ; \R^{N} ) $, we may approximate from below a large class of functions
as was shown in \cite[Lemma~2.3]{AlibertBouchitte}:
\begin{lemma}
\label{lemma:approximation-Alibert-Bouchitte}
Let $ f : \Omega \times \R^{N} \to \R $ be lower semicontinuous such that for some $ \alpha > 0 $ it holds
\[ f( x , X ) \ge - \alpha ( 1 + |X| ) \]
for all $ x \in \Omega $ and $ X \in \R^{n} $. 
There exists a non-decreasing sequence of functions $ \{ g_{k} \}_{ k \in \N } $ from $ {\mathbf E}( \Omega ; \R^{n} ) $ such that
\[ g_{k}( x , X ) \ge - \alpha ( 1 + |X| ),
\quad 
\sup_{k \in \N} g_{k} = f 
\quad \mbox{and} \quad
\sup_{k \in \N} g_{k}^{\i} = h_{f} \]
where
\[ h_{f}( x_{0} , X_{0} ) := \liminf \left\{ \frac{ f( x , t X ) }{t} : x \to x_{0}, \ X \to X_{0}, \ t \to \i \right\}. \]
\end{lemma}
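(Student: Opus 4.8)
The plan is to lift the problem to the one-point-per-direction compactification of $\R^{N}$, on which membership in $\mathbf{E}(\Omega;\R^{N})$ is precisely continuity up to the boundary. Recall that $f\mapsto Tf$ is (essentially) a bijection between functions on $\Omega\times\R^{N}$ with at most linear growth and functions on $\Omega\times B_{1}(0)$, with the inverse $f(x,X)=(1+|X|)\,Tf\big(x,\tfrac{X}{1+|X|}\big)$, and with recession data of $f$ corresponding to boundary traces on $\Omega\times\partial B_{1}(0)$. The strategy is then: (i) transform $f$ into $Tf$; (ii) extend $Tf$ to the closed ball by prescribing its boundary slice through $h_{f}$, obtaining a lower semicontinuous, uniformly bounded-below function $\overline{Tf}$ on $\overline{\Omega}\times\overline{B_{1}(0)}$; (iii) approximate $\overline{Tf}$ from below by an increasing sequence of Lipschitz functions via the standard inf-convolution formula; (iv) transform back and verify the three assertions; (v) restore the pointwise lower bound.

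In detail: since $f$ is lower semicontinuous and $\check X\mapsto\tfrac{\check X}{1-|\check X|}$ is a homeomorphism of $B_{1}(0)$ onto $\R^{N}$, the function $Tf$ is lower semicontinuous on $\Omega\times B_{1}(0)$ and the bound $f(x,X)\ge-\alpha(1+|X|)$ becomes $Tf(x,\check X)\ge-\alpha$. I extend $Tf$ to $\overline{\Omega}\times\overline{B_{1}(0)}$ by declaring, for $|\check X_{0}|=1$, the value to be $h_{f}(x_{0},\check X_{0})$ (well defined, as $h_{f}$ is $1$-homogeneous in the last slot). The key point at this step is that the extension $\overline{Tf}$ is lower semicontinuous: unravelling the change of variables, for $x_{j}\to x_{0}$ and $\check X_{j}\to\check X_{0}$ with $|\check X_{0}|=1$ one has $Tf(x_{j},\check X_{j})=|\check X_{j}|\cdot t_{j}^{-1}f\big(x_{j},t_{j}\tfrac{\check X_{j}}{|\check X_{j}|}\big)$ with $t_{j}:=\tfrac{|\check X_{j}|}{1-|\check X_{j}|}\to\infty$, and since $t_{j}^{-1}f(x_{j},\cdot)\ge-\alpha(t_{j}^{-1}+1)$ is bounded below while $|\check X_{j}|\to1$, one gets $\liminf_{j}Tf(x_{j},\check X_{j})\ge\liminf\left\{t^{-1}f(x,tX):x\to x_{0},\ X\to\tfrac{\check X_{0}}{|\check X_{0}|},\ t\to\infty\right\}=h_{f}(x_{0},\check X_{0})$; interior lower semicontinuity is inherited from $f$. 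Assuming $\Omega$ bounded (otherwise run the construction on a compact exhaustion, it being local in $x$), the cylinder $\overline{\Omega}\times\overline{B_{1}(0)}$ is compact, so $h_{k}(z):=\inf_{w}\big[\overline{Tf}(w)+k\,\dist(z,w)\big]$ defines an increasing sequence of $k$-Lipschitz functions with $h_{k}\uparrow\overline{Tf}$ pointwise and $-\alpha\le h_{k}\le\overline{Tf}$. Setting $g_{k}(x,X):=(1+|X|)\,h_{k}\big(x,\tfrac{X}{1+|X|}\big)$, one has $Tg_{k}=h_{k}|_{\Omega\times B_{1}(0)}$, which extends boundedly and continuously to the closure, so $g_{k}\in\mathbf{E}(\Omega;\R^{N})$; the sequence is non-decreasing; $\sup_{k}g_{k}=f$ because $\sup_{k}h_{k}=\overline{Tf}$ restricts on the open ball to $Tf$; and $g_{k}^{\infty}$ equals the $1$-homogeneous extension of the boundary trace $h_{k}|_{\partial B_{1}(0)}$ (using that for $\mathbf{E}$-functions the recession function is a genuine limit coinciding with the boundary value of the transform), whence $\sup_{k}g_{k}^{\infty}$ equals the boundary trace of $\overline{Tf}$, i.e. $h_{f}$. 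Finally $g_{k}(x,X)\ge-\alpha(1+|X|)$ since $h_{k}\ge-\alpha$; if convenient one may instead use $\max\big(g_{k},-\alpha(1+|\cdot|)\big)$, which stays in $\mathbf{E}$ (the transform of $-\alpha(1+|X|)$ being the constant $-\alpha$) and below $f$, without altering the two suprema.

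I expect the main obstacle to be the bookkeeping of the recession functions — concretely, proving $\sup_{k}g_{k}^{\infty}=h_{f}$ exactly, neither more nor less. This rests on two things that must be handled carefully: first, that $\overline{Tf}$ is genuinely lower semicontinuous with boundary slice equal to $h_{f}$ (the estimate above, where both the uniform lower bound on $t^{-1}f$ and $|\check X_{j}|\to1$ are used, and where one must also allow $\overline{Tf}$ to take the value $+\infty$ on the boundary when $f$ is superlinear, without breaking boundedness of the individual $h_{k}$); and second, that restriction to the sphere commutes with taking suprema of the envelopes $h_{k}$, i.e. $\sup_{k}\big(h_{k}|_{\partial B_{1}(0)}\big)=\big(\sup_{k}h_{k}\big)|_{\partial B_{1}(0)}=h_{f}|_{\partial B_{1}(0)}$, which is immediate for pointwise-increasing sequences. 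The verification that $g_{k}\in\mathbf{E}(\Omega;\R^{N})$ and the reduction from unbounded to bounded $\Omega$ are routine.
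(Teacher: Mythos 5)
The paper offers no proof of this statement: it is quoted directly from Alibert--Bouchitt\'e \cite[Lemma~2.3]{AlibertBouchitte}, so there is no in-paper argument to compare against. Your proof is correct and is essentially the standard argument for that result: transfer to the compactification via $T$, note that the linear lower bound becomes the uniform bound $Tf \ge -\alpha$, extend to the closed ball by the boundary slice $h_f$, verify lower semicontinuity of the extension, run the inf-convolution (Lipschitz--Yosida) approximation from below on the compact cylinder, and pull back. The key computation --- that for $x_j \to x_0$, $\check{X}_j \to \check{X}_0$ with $|\check{X}_j|<1=|\check{X}_0|$ one has $Tf(x_j,\check{X}_j) = |\check{X}_j|\, t_j^{-1} f(x_j, t_j \check{X}_j/|\check{X}_j|)$ with $t_j \to \infty$, so that the uniform lower bound on $t^{-1}f$ yields $\liminf_j Tf(x_j,\check{X}_j) \ge h_f(x_0,\check{X}_0)$ --- is exactly where the joint-$\liminf$ definition of $h_f$ (as opposed to a pointwise recession function) enters, and you have it right.

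Two small points worth making explicit. First, your displayed lower-semicontinuity estimate only treats sequences approaching the sphere from the open ball; for sequences that stay on the sphere you additionally need that $h_f$ itself is lower semicontinuous, which is automatic from the representation $h_f(x_0,X_0)=\sup_{k}\inf_{E_{x_0,X_0,k}} t^{-1}f(x,tX)$ that the paper records right after the lemma (enlarging $k$ shrinks the admissible set uniformly for nearby base points). Second, the extension must also be specified over $\partial\Omega$; any choice bounded below by $-\alpha$ works, since points of $\Omega\times\overline{B_1(0)}$ are interior in the $x$-direction and so the values over $\partial\Omega$ do not affect $\sup_k h_k$ there. With these remarks the argument is complete, including the superlinear case where $\overline{Tf}=+\infty$ on part of the sphere while each $h_k$ remains finite and Lipschitz.
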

\item
Since $ h_{f} $ will play a significant role in the approximation of the recession function, 
let us give a more detailed formula for our setting.
\item
Let $ f : \Omega \times \M{n}{n}_{\sm} \to \R $.
For $ P_{0} \in \M{n}{n}_{\dev} $ we may rewrite the definition in the following manner
\begin{eqnarray*} 
h_{f}( x_{0} , P_{0} ) 
& = & \liminf \left\{ \frac{ f ( x , t ( P + \tfrac{ \rho }{n} I ) ) }{t}  : 
	x \to x_{0}, \ P \to P_{0} \mbox{ in } \M{n}{n}_{\dev}, \ \rho \to 0, \ t \to \i \right\} \\
& = & \sup_{ k \in \N } \inf_{ ( x , P , \rho , t ) \in E_{ x_{0} , P_{0} , k} } \frac{ f ( x , t ( P + \tfrac{ \rho }{n} I ) ) }{t} 
\end{eqnarray*}
where
\[ E_{ x_{0} , P_{0} , k} := \left\{ ( x , X , \rho , t ) : 
| x - x_{0} | < \frac{1}{k} , \ P \in \M{n}{n}_{\dev}, | P - P_{0} | < \frac{1}{k} , | \rho | < \frac{1}{k} , t > k \right\}. \]
Notice that in $ - h_{-f} $ $ \liminf $ is replaced by $ \limsup $, i.e.,
\begin{eqnarray*} 
-h_{-f}( x_{0} , P_{0} ) 
& = & \limsup \left\{ \frac{ f ( x , t ( P + \tfrac{ \rho }{n} I ) ) }{t}  : 
	x \to x_{0}, \ P \to P_{0} \mbox{ in } \M{n}{n}_{\dev}, \ \rho \to 0, \ t \to \i \right\} \\
& = & \inf_{ k \in \N } \sup_{ ( x , P , \rho , t ) \in E_{ x_{0} , P_{0} , k} } \frac{ f ( x , t ( P + \tfrac{ \rho }{n} I ) ) }{t} 
\end{eqnarray*}
We wish to apply the Reshetnyak continuity theorem and Lemma~\ref{lemma:approximation-Alibert-Bouchitte}.
Therefore, we must carefully analyse the relationship between $ f^{\i} $ and $ ( f_{\dev} )^{\i} $,
and take into account the quadratic growth of $f$ in the trace direction.
For that reason, we prove a sort of Lipschitz continuity that enables us to compare values of finite and zero trace.   
\begin{lemma}
\label{lemma:Lipschitz-Hencky}
Let $ f : \M{n}{n}_{\sm} \to \R $ be symmetric-rank-one-convex and suppose 
\[ | f(X) | \le \beta( 1 + | X_{\dev} | + ( \tr X )^{2} ). \]
Then it fulfils the following local Lipschitz condition in the trace direction:
For any $ M, \varkappa \ge 1 $ and $ P \in \M{n}{n}_{ \dev } $,
it holds
\[ | f( P + \tfrac{ \rho }{n} I ) - f( P ) | \le 14 \beta n \sqrt{\varkappa} ( \sqrt{ | P | } + M ) |\rho| \]
for all $ \rho^{2} \le \varkappa ( | P | + M^{2} ) $.
\end{lemma}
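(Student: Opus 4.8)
The plan is to exploit symmetric-rank-one convexity in the single direction $\tfrac1n I$, which is itself a symmetric rank-one matrix (indeed $I = \sum_i e_i \odot e_i$, but more to the point the slope estimate only needs convexity of $t \mapsto f(P + t\tfrac1n I)$, and $\tfrac1n I = e_1 \odot e_1 + \dots$ — actually one uses that $I/n$ lies in the convex cone spanned by symmetric rank-one matrices, or directly that $f$ restricted to the line $\rho \mapsto P + \tfrac\rho n I$ is convex since this is a rank-one line up to a change of basis). Convexity along this line gives, for the function $\phi(\rho) := f(P + \tfrac\rho n I)$, the standard three-slopes inequality: for $0 < \rho$,
\[
 \frac{\phi(\rho) - \phi(0)}{\rho} \le \frac{\phi(R) - \phi(\rho)}{R - \rho} \le \frac{\phi(R) - \phi(0)}{R}
\]
for any $R > \rho$, and symmetrically on the negative side. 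So to bound $|\phi(\rho) - \phi(0)|$ one picks a comparison point $R$ (and $-R'$) \emph{far out} along the line, estimates $\phi$ there by the quadratic growth hypothesis $|\phi(R)| \le \beta(1 + |P| + (R + \tr P)^2)$ — note $\tr(P + \tfrac\rho n I) = \rho$ so actually $(\tr(P + \tfrac\rho n I))^2 = \rho^2$ and the bound reads $|\phi(\rho)| \le \beta(1 + |P| + \rho^2)$ — and divides by $R$. Optimizing the choice of $R$ against the $\rho^2$ term is what produces the $\sqrt{\varkappa}(\sqrt{|P|} + M)$ factor.

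Concretely I would proceed as follows. First, fix $P \in \M{n}{n}_{\dev}$ and $\rho$ with $\rho^2 \le \varkappa(|P| + M^2)$, and set $\phi(\rho) := f(P + \tfrac\rho n I)$, observing $\phi$ is convex. Second, by the slope monotonicity, for any $R > |\rho|$,
\[
 \frac{\phi(\rho) - \phi(0)}{\rho} \le \frac{\phi(R) - \phi(0)}{R - |\rho|} \quad\text{if } \rho > 0,
\]
and to get a lower bound on $\phi(\rho) - \phi(0)$ use the slope at $-R$: $\frac{\phi(0) - \phi(-R)}{R} \le \frac{\phi(\rho) - \phi(0)}{\rho}$. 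Combining, $|\phi(\rho) - \phi(0)| \le |\rho| \max\{\,|\phi(R) - \phi(0)|,\, |\phi(-R) - \phi(0)|\,\}/(R - |\rho|)$ roughly. Third, insert the growth bound $|\phi(\pm R)| \le \beta(1 + |P| + R^2)$ and $|\phi(0)| \le \beta(1 + |P|)$, getting a numerator $\lesssim \beta(1 + |P| + R^2)$. Fourth, choose $R$ of order $\sqrt{|P|} + M$ (this is $\ge 1$ and, using $\rho^2 \le \varkappa(|P| + M^2) \le \varkappa(\sqrt{|P|}+M)^2$, guarantees $R - |\rho|$ can be kept a fixed fraction of $R$ after possibly replacing $R$ by $(1+\sqrt\varkappa)R$ or similar). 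Then $R^2 \sim |P| + M^2$, so the numerator is $\lesssim \beta(|P| + M^2)$, and dividing by $R \sim \sqrt{|P|} + M$ leaves $\lesssim \beta(\sqrt{|P|} + M)$; the $\sqrt\varkappa$ enters because one must take $R$ a bit larger than $|\rho|$, i.e. $R \sim \sqrt\varkappa(\sqrt{|P|}+M)$, and an extra factor $n$ comes from $|\tfrac\rho n I| = \tfrac{|\rho|}{\sqrt n}$ bookkeeping and from passing between $\rho$ and the matrix norm. Tracking the constants carefully should land at or below $14\beta n \sqrt\varkappa(\sqrt{|P|}+M)|\rho|$.

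The main obstacle is purely the constant-chasing: one has to choose the comparison radius $R$ so that simultaneously (a) $R - |\rho|$ is comparable to $R$ (so the denominator in the slope inequality is not degenerate), (b) $R^2$ is comparable to $|P| + M^2$ (so the quadratic growth term is controlled), and (c) everything is consistent with the hypothesis $\rho^2 \le \varkappa(|P| + M^2)$ and with $M, \varkappa \ge 1$. A clean way is to set $R := 2\sqrt\varkappa(\sqrt{|P|} + M)$, check $R \ge 2|\rho|$ hence $R - |\rho| \ge R/2$, then estimate $|\phi(\rho)-\phi(0)| \le \tfrac{2|\rho|}{R}\,\beta(1 + |P| + R^2)$ and bound $1 + |P| + R^2 \le 1 + (\sqrt{|P|}+M)^2 + 4\varkappa(\sqrt{|P|}+M)^2 \le 6\varkappa(\sqrt{|P|}+M)^2$ (using $M \ge 1$ so $1 \le (\sqrt{|P|}+M)^2$), giving $|\phi(\rho)-\phi(0)| \le \tfrac{2|\rho|}{R}\cdot 6\varkappa(\sqrt{|P|}+M)^2 = 6\sqrt\varkappa(\sqrt{|P|}+M)|\rho|$; the remaining factor up to $14n$ absorbs the matrix-norm conversions between $\rho$, $\tfrac\rho n I$, and $f(P + \tfrac\rho n I) = \phi(\rho)$, together with the one-sided slope argument needing the worst of the two endpoints $\pm R$. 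This yields the stated inequality.
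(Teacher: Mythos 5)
There is a genuine gap at the very first step of your plan: the function $\phi(\rho) := f(P + \tfrac{\rho}{n} I)$ need \emph{not} be convex. The identity matrix is not a symmetric rank-one matrix for $n \ge 2$ (a matrix $a \odot b$ with $a \not\parallel b$ has one positive and one negative eigenvalue, and $a \odot a$ has rank one, so $I$ is never of this form), and no ``change of basis'' makes the line $\rho \mapsto P + \tfrac{\rho}{n} I$ a rank-one line. Your fallback --- that $I/n$ lies in the convex cone spanned by symmetric rank-one matrices --- does not help either: convexity along each of the directions $e_{1} \odot e_{1}, \dots, e_{n} \odot e_{n}$ does not imply convexity along their sum (compare $g(x,y) = -xy$, which is separately convex but concave on the diagonal). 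So the three-slopes inequality you want to apply to $\phi$ is unavailable, and the entire argument collapses before the constant-chasing even begins.

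The paper circumvents exactly this obstruction by invoking the Ball--Kirchheim--Kristensen estimate (Lemma~\ref{lemma:BKK}): for a \emph{separately} convex function, the Lipschitz constant on $B_{r}(P)$ is bounded by $n \, {\rm osc}(f; B_{2r}(P))/r$. This converts the one-directional convexity information (iterated and telescoped over coordinate rank-one directions, which is the content of that lemma) into a genuine local Lipschitz bound valid in \emph{all} directions, including the trace direction. One then bounds ${\rm osc}(f; B_{2r}(P)) \le 2\beta(1 + |P| + 2r + 4r^{2}n)$ from the growth hypothesis, chooses $r$ by $r^{2} n = \varkappa(|P| + M^{2})$ so that the hypothesis $\rho^{2} \le \varkappa(|P|+M^{2})$ guarantees $|\tfrac{\rho}{n} I| = \tfrac{|\rho|}{\sqrt{n}} \le r$, and tracks constants --- which is the part of your plan that is essentially sound, but it has to be run on top of the BKK Lipschitz bound rather than on a nonexistent one-dimensional convexity. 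To repair your proof you would either cite Lemma~\ref{lemma:BKK} or reprove its telescoping argument; the single-line slope estimate cannot be made to work.
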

\item 
The proof is based on Lemma~\ref{lemma:BKK}. 
It says that for a separately convex function $f$ on a ball $ B_{2r}( X ) $,
its Lipschitz constant on $ B_{r}( X ) $ does not exceed $ n \frac{ {\rm osc }( f , B_{2r}( X ) ) }{r} $.
\begin{proof}
Let $ P \in \M{n}{n}_{\dev} $ be arbitrary. Then
\begin{eqnarray*} 
{\rm osc} ( f , B( P , 2r ) )  
&  =  & \sup_{ X,Y \in B( P , 2r ) } | f(X) - f(Y) | \\
& \le & 2 \sup_{ X \in B( P , 2r ) } | f(X) | \\
& \le & 2 \beta( 1 + | P | + 2r + 4 r^{2} n ).
\end{eqnarray*}
Fix $r$ by $ r^{2} n = \varkappa( | P | + M^{2} ) $. Then 
\begin{eqnarray*} 
\tfrac{n}{r} {\rm osc} ( f , B( P , 2r ) )  
& \le & \tfrac{ 2 n \beta }{r} ( 1 + r^{2} n - M^{2} + 2r + 4 r^{2} n ) \\
& \le & 2 n \beta( 2 + 5 r n ) \\
& \le & 14 n^{3/2} \beta \sqrt{\varkappa} ( \sqrt{ | P | } + M ). 
\end{eqnarray*}
The claim follows as $ | \frac{ \rho }{n} I | = \frac{ \rho }{ \sqrt{n} } $. 
\end{proof}
\item
Let $f$ be as in Proposition~\ref{prop:c-strict-continuity}. 
For any $ M,K \in \N $ we set
\[ C_{M,K} := \{ X \in \M{n}{n}_{\sm} : | X_{\dev} | \ge K ( ( \tr X )^{2} - M^{2} ) \} \]
and define a lower and an upper bound for $f$
\[ \hat{f}_{M,K} \le f \le \check{f}_{M,K} \]
in the following way. 
Let us choose a continuous function $ \zeta_{M,K} : \M{n}{n}_{\sm} \to [0,1] $ such that 
\[ \zeta_{M,K}(X) = 1 \quad \mbox{for all $ X \in C_{M,K} $}
\quad \mbox{and} \quad
\zeta_{M,K}(X) = 0 \quad \mbox{for every $ X \not\in C_{M+1,K} $}. \]
Then define
\[ \hat{f}_{M,K}(x,X) := \zeta_{M,K}(X) f(x,X). \]
The function $ \hat{f}_{M,K} $ fulfils a linear growth condition since
\[ \hat{f}_{M,K}(x,X) \le 1_{ C_{M+1,K} }(X) \ f(x,X) \le \beta( 1 + 2 | X_{\dev} | + (M+1)^{2} ). \]
\item
We define the upper bound as
\[ \check{f}_{M,K}(x,X) := f(x,X) + \beta K^{2} \max \{ ( \tr X )^{2} - M^{2} - \tfrac{1}{K} | X_{\dev} | , 0 \}. \]
\begin{lemma}
\label{lemma:bound-h-M-K}
Suppose $f$ is as in Proposition~\ref{prop:c-strict-continuity}. For all $ x_{0} \in \Omega $, $ P_{0} \in \M{n}{n}_{\dev} $, $ M \ge 1 $
and $ K > 1 $ 
\[ - h_{ -\hat{f}_{M,K} }( x_{0} , P_{0} ) \le - h_{ - f_{\dev} }( x_{0} , P_{0} ) + \frac{ 14 \beta n }{ \sqrt{K} } | P_{0} | \]
and
\[ h_{ \check{f}_{M,K} }( x_{0} , P_{0} ) \ge h_{ f_{\dev} }( x_{0} , P_{0} ) - \frac{ 14 \beta n }{ \sqrt{K-1} } | P_{0} |. \]
\end{lemma}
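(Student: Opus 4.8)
The idea is to unwind the definitions of $h_{-\hat f_{M,K}}$ and $h_{\check f_{M,K}}$ as $\liminf$/$\limsup$ over the approach sets $E_{x_0,P_0,k}$ and to compare the integrands with $f_{\dev}$ using the trace-direction Lipschitz estimate of Lemma~\ref{lemma:Lipschitz-Hencky}. Fix $x_0\in\Omega$, $P_0\in\M{n}{n}_{\dev}$, $M\ge 1$, $K>1$. A generic point of $E_{x_0,P_0,k}$ is $(x,P,\rho,t)$ with $|x-x_0|<1/k$, $|P-P_0|<1/k$, $|\rho|<1/k$ and $t>k$; the relevant value is $f(x,t(P+\tfrac{\rho}{n}I))/t$ (possibly with $f$ replaced by $\hat f_{M,K}$ or $\check f_{M,K}$). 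Write $X=t(P+\tfrac{\rho}{n}I)$, so $X_{\dev}=tP$ and $\tr X = t\rho$.

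\textbf{Upper bound (first inequality).} For the first estimate I would first observe that once $k$ is large enough (so that $|\rho|<1/k$ is small while $|P|$ stays near $|P_0|$ and $t$ is large), the matrix $X=t(P+\tfrac\rho n I)$ lies in $C_{M,K}$: indeed $|X_{\dev}|=t|P|$ grows like $t$ whereas $(\tr X)^2-M^2 = t^2\rho^2-M^2$ with $t\rho<t/k$ bounded, so $|X_{\dev}|\ge K((\tr X)^2-M^2)$ holds for $t$ large. Hence $\zeta_{M,K}(X)=1$ and $\hat f_{M,K}(x,X)=f(x,X)$ on the tail of $E_{x_0,P_0,k}$, so $-h_{-\hat f_{M,K}}(x_0,P_0)$ equals the same quantity with $f$ in place of $\hat f_{M,K}$. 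Then I compare $f(x,X)=f(x,tP+\tfrac{t\rho}{n}I)$ with $f_{\dev}(x,tP)=f(x,tP)$ via Lemma~\ref{lemma:Lipschitz-Hencky} applied with the matrix $tP\in\M{n}{n}_{\dev}$, the increment $t\rho$, and suitable $M',\varkappa$. The constraint there is $(t\rho)^2\le\varkappa(|tP|+M'^2)$; since $t\rho<t/k$ and $|tP|=t|P|\approx t|P_0|$, the left side is $O(t^2/k^2)$ and the right side is $\Theta(t)$, so for each fixed $k$ this holds for $t$ large with, say, $\varkappa$ of order $t/k^2$ — more carefully I would just take $M'=1$ and $\varkappa$ the smallest value making the inequality true, getting $|f(x,tP+\tfrac{t\rho}{n}I)-f(x,tP)|\le 14\beta n\sqrt{\varkappa}(\sqrt{t|P|}+1)\,t|\rho|$. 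Dividing by $t$ and sending $k\to\infty$ (so $x\to x_0$, $P\to P_0$, $\rho\to 0$) the error term is controlled; the cleanest route is to absorb it into the $1/\sqrt K$ term by noting that on $C_{M+1,K}\setminus C_{M,K}$, where $\zeta$ is not identically $1$, one has $(\tr X)^2-M^2\le \tfrac1K|X_{\dev}|$, i.e.\ $t^2\rho^2\le \tfrac1K t|P|+M^2$, which forces $t|\rho|\le \sqrt{t|P|/K}+M$ and hence the oscillation estimate produces exactly a $\tfrac{14\beta n}{\sqrt K}|P_0|$ contribution in the limit. Finally $-h_{-f}(x_0,P_0)$ restricted to $P\to P_0$ in $\M{n}{n}_{\dev}$, $\rho\to0$ collapses to $-h_{-f_{\dev}}(x_0,P_0)$ by continuity of $f_{\dev}$ in the second variable and the fact that $\limsup_{t\to\infty}f_{\dev}(x_0,tP_0)/t$ is attained along $P\to P_0$.

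\textbf{Lower bound (second inequality).} For $\check f_{M,K}$ I proceed symmetrically. By definition $\check f_{M,K}(x,X)=f(x,X)+\beta K^2\max\{(\tr X)^2-M^2-\tfrac1K|X_{\dev}|,0\}$. Since the added term is nonnegative, $h_{\check f_{M,K}}\ge h_{f}$ pointwise; but I need the sharper comparison with $h_{f_{\dev}}$, so again write $X_{\dev}=tP$, $\tr X=t\rho$ and estimate $f(x,tP+\tfrac{t\rho}{n}I)\ge f(x,tP)-14\beta n\sqrt{\varkappa}(\sqrt{t|P|}+M')\,t|\rho|$ from Lemma~\ref{lemma:Lipschitz-Hencky}. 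The point is that whenever the penalty term vanishes, i.e.\ $(\tr X)^2-M^2\le\tfrac1K|X_{\dev}|$, one has $t^2\rho^2\le M^2+\tfrac1K t|P|$, hence $t|\rho|\le M+\sqrt{t|P|/K}$, and plugging this into the Lipschitz bound (with the appropriate $\varkappa$, $M'$ chosen so that the hypothesis $\rho^2\le\varkappa(|P|+M'^2)$ is met) yields an error of order $\tfrac{1}{\sqrt K}\,t|P|$ after the penalty from the $\max$ is used to kill the complementary regime; dividing by $t$ and passing to the $\liminf$ over $E_{x_0,P_0,k}$ one picks up the loss $\tfrac{14\beta n}{\sqrt{K-1}}|P_0|$ (the shift $K\to K-1$ coming from the slack needed to go from $C_{M,K}$-type sets to the region where the penalty truly forces smallness). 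Combining with $h_{f}\ge$ (value with $f_{\dev}$, modulo this error) gives $h_{\check f_{M,K}}(x_0,P_0)\ge h_{f_{\dev}}(x_0,P_0)-\tfrac{14\beta n}{\sqrt{K-1}}|P_0|$.

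\textbf{Main obstacle.} The delicate point is the bookkeeping in the two regimes — on $C_{M,K}$ (resp.\ where the penalty vanishes) the functions coincide with $f$ and the only error is the trace-direction Lipschitz term, which one must show tends to the stated $1/\sqrt K$ (resp.\ $1/\sqrt{K-1}$) multiple of $|P_0|$; off that set one must exploit the definition of $\zeta_{M,K}$ (resp.\ the $\max$-penalty) to conclude the quotient cannot undercut (resp.\ overshoot) $h_{f_{\dev}}$. Getting the constants to line up with Lemma~\ref{lemma:Lipschitz-Hencky} — in particular choosing $M'$ and $\varkappa$ in that lemma as functions of $t$ and $k$ so that both its hypothesis $\rho^2\le\varkappa(|P|+M'^2)$ holds along the relevant sequences and the resulting bound $14\beta n\sqrt\varkappa(\sqrt{|P|}+M')|\rho|$ (applied to $tP$, $t\rho$, so really $14\beta n\sqrt\varkappa(\sqrt{t|P|}+M')t|\rho|/t$) converges to the advertised expression — is the technical heart of the argument. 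Everything else is a routine passage to the limit using continuity of $f_{\dev}(x,\cdot)$ and the supremum characterization of $(f_{\dev})^{\i}$ along symmetric-rank-one directions noted in Remark~\ref{remark:c-strict-continuity}.
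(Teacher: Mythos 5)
Your overall strategy coincides with the paper's: split according to whether $t(P+\tfrac{\rho}{n}I)$ lies in a set of the form $C_{M',K'}$, use that membership there forces $|t\rho|\le\sqrt{\tfrac{1}{K'}|tP|+(M'+1)^2}$, and feed this into Lemma~\ref{lemma:Lipschitz-Hencky} to produce the $\tfrac{14\beta n}{\sqrt{K}}|P_0|$ (resp.\ $\tfrac{14\beta n}{\sqrt{K-1}}|P_0|$) error. However, the opening step of your argument for the first inequality is false and, taken at face value, destroys it. You claim that for fixed $k$ and large $t$ the matrix $X=t(P+\tfrac{\rho}{n}I)$ lies in $C_{M,K}$ because ``$t\rho<t/k$ is bounded''; it is not: for a fixed admissible $\rho\ne 0$ one has $(\tr X)^2=t^2\rho^2\sim t^2$, which eventually dominates $\tfrac1K|X_{\dev}|=\tfrac1K t|P|\sim t$, so $X$ leaves every $C_{M',K}$. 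Consequently the asserted identity of $-h_{-\hat{f}_{M,K}}(x_0,P_0)$ with ``the same quantity with $f$'' is not merely unproved but wrong: since $f(x,X)\ge\alpha(\tr X)^2$, taking $\rho=\tfrac{1}{2k}$ and $t\to\infty$ gives $\sup_{E_{x_0,P_0,k}}f(x,t(P+\tfrac{\rho}{n}I))/t\ge\alpha\sup t\rho^2=+\infty$ for every $k$, so that quantity is $+\infty$ — removing the cutoff is exactly what one cannot do. Your fallback of choosing $\varkappa$ minimally so that the hypothesis of Lemma~\ref{lemma:Lipschitz-Hencky} is satisfied fails for the same reason: it yields an error of order $t\rho^2$ after dividing by $t$, which is unbounded on $E_{x_0,P_0,k}$.

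The correct bookkeeping, which your ``cleanest route'' and ``main obstacle'' remarks circle around but never assemble, is the following dichotomy. Outside $C_{M+1,K}$ one has $\hat{f}_{M,K}=0\le f(x,tP)/t+(\text{error})$ trivially, since $f\ge0$; inside $C_{M+1,K}$ (not just on the annulus $C_{M+1,K}\setminus C_{M,K}$) membership gives $|t\rho|\le\sqrt{\tfrac1K|tP|+(M+1)^2}$, and Lemma~\ref{lemma:Lipschitz-Hencky} applied with $\varkappa=1$ and $M'=M+1$ then yields the stated error uniformly over the approach set. For the second inequality the dichotomy is governed by $C_{M+1,K-1}$, and the step you dismiss as ``the penalty kills the complementary regime'' is precisely the computation
$\beta K^2\bigl(t^2\rho^2-\tfrac1K t|P|-M^2\bigr)\ge\beta(t|P|+1)\ge f(x,tP)$ whenever $t^2\rho^2\ge\tfrac{1}{K-1}t|P|+(M+1)^2$;
this is where the shift from $K$ to $K-1$ actually originates, and it must be written out rather than attributed to ``slack''. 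With these two repairs your outline becomes the paper's proof.
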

\begin{proof}
Let us fix $ x_{0} \in \Omega $ and $ P_{0} \in \M{n}{n}_{\dev} $. We have to bound
\[ - h_{ - \hat{f}_{M,K} }( x_{0} , P_{0} ) 
= \inf_{ k \in \N } \sup_{ ( x , P , \rho , t ) \in E_{ x_{0} , P_{0} , k} } \frac{ \hat{f}_{M,K} ( x , t ( P + \tfrac{ \rho }{n} I ) ) }{t}. \] 
Take any $ ( x , P , \rho , t ) \in E_{ x_{0} , P_{0} , k } $.
\begin{itemize}
\item
For $ t ( P + \tfrac{\rho}{n} I ) \not\in C_{M+1,K} $ we have $ \hat{f}_{M,K}( x , t ( P + \tfrac{\rho}{n} I ) ) = 0 $.
\item
If $ t ( P + \tfrac{\rho}{n} I ) \in C_{M+1,K} $, then $ | t \rho | \le \sqrt{ \frac{1}{K} | t P | + (M+1)^{2} } $ and by Lemma~\ref{lemma:Lipschitz-Hencky}
\begin{eqnarray*} 
\hat{f}_{M,K}( x , t ( P + \tfrac{\rho}{n} I ) ) 
& \le & f( x , t ( P + \tfrac{\rho}{n} I ) ) \\
& \le & f( x , t P ) + 14 \beta n ( | t P |^{1/2} + M + 1 ) \sqrt{ \frac{1}{K} | t P | + (M+1)^{2} }.
\end{eqnarray*}
\end{itemize}
In both cases
\begin{eqnarray*} 
\frac{ \hat{f}_{M,K}( x , t ( P + \tfrac{\rho}{n} I ) ) }{t}
& \le & \frac{ f( x , t P ) }{t} + 14 \beta n \left( \frac{M+1}{ \sqrt{t} } + | P |^{1/2} \right) \sqrt{ \frac{1}{K} | P | + \frac{ (M+1)^{2} }{ \sqrt{t} } } \\
& \le & \frac{ f( x , t P ) }{t} + 14 \beta n \left( \frac{M+2}{ \sqrt{k} } + | P_{0} |^{1/2} \right) \sqrt{ \frac{1}{K} | P_{0} | + \frac{ (M+2)^{2} }{ \sqrt{k} } }.
\end{eqnarray*}
Hence
\[ - h_{ -\hat{f}_{M,K} }( x_{0} , P_{0} ) 
\le - h_{ -f_{\dev} }( x_{0} , P_{0} ) + \frac{ 14 \beta n }{ \sqrt{K} } | P_{0} |. \]
For
\[ h_{ \check{f}_{M,K} }( x_{0} , P_{0} ) 
= \sup_{ k \in \N } \inf_{ ( x , P , \rho , t ) \in E_{ x_{0} , P_{0} , k} } \frac{ \check{f}_{M,K} ( x , t ( P + \tfrac{ \rho }{n} I ) ) }{t}. \]
again choose arbitrary $ ( x , P , \rho , t ) \in E_{ x_{0} , P_{0} , k } $.
\begin{itemize}
\item
If $ t ( P + \tfrac{ \rho }{n} I ) ) \not\in C_{ M+1 , K-1 } $, it holds $ t^{2} \rho^{2} \ge \frac{1}{K-1} t|P| + ( M + 1 )^{2}  $ and therefore
\begin{eqnarray*} 
\check{f}_{M,K}( x , t ( P + \tfrac{ \rho }{n} I ) )
& \ge & \beta K^{2} ( t^{2} \rho^{2} - \tfrac{1}{K} t |P| - M^{2} ) \\
& \ge & \beta K^{2} ( \tfrac{1}{K-1} t |P| + ( M + 1 )^{2} - \tfrac{1}{K} t |P| - M^{2} ) \\
&  =  & \beta K^{2} ( \tfrac{1}{ K( K-1 ) } t |P| + 2 M + 1 ) \\
& \ge & \beta ( t |P| + 1 ) \\
& \ge & f( x , tP ).
\end{eqnarray*}
\item
If $ t ( P + \tfrac{ \rho }{n} I ) ) \in C_{ M+1 , K-1 } $, we assess as above
\begin{eqnarray*} 
\frac{ \check{f}_{M,K}( x , t ( P + \tfrac{ \rho }{n} I ) ) }{t}
& \ge & \frac{ f( x , t ( P + \tfrac{ \rho }{n} I ) ) }{t} \\
& \ge & \frac{ f( x , t P ) }{t} - 14 \beta n \left( \frac{M+1}{ \sqrt{t} } + | P |^{1/2} \right) \sqrt{ \frac{1}{K-1} | P | + \frac{ (M+1)^{2} }{ \sqrt{t} } } \\
& \ge & \frac{ f( x , t P ) }{t} - 14 \beta n \left( \frac{M+2}{ \sqrt{k} } + | P_{0} |^{1/2} \right) \sqrt{ \frac{1}{K-1} | P_{0} | + \frac{ (M+2)^{2} }{ \sqrt{k} } }.
\end{eqnarray*}
\end{itemize}
Therefore
\[ h_{ \check{f}_{M,K} }( x_{0} , P_{0} ) \ge h_{ f_{\dev} }( x_{0} , P_{0} ) - \frac{ 14 \beta n }{ \sqrt{K-1} } | P_{0} |. \qedhere \]
\end{proof} 
\item 
In the following three lemmas, the assumptions of Proposition~\ref{prop:c-strict-continuity} should hold.
\begin{lemma}
\label{lemma:c-strict-continuity-1}
The functional $ \F^{*} : U( \Omega ) \to \R $
\[ \F^{*}(u) := \int_{ \Omega } f( x , \E u(x) ) \x - \int_{ \Omega } h_{ -f_{\dev} }( x , \tfrac{ d E^{s} u }{ d | E^{s} u | }(x) ) \ d |E^{s} u|(x) \]
is upper semicontinuous with respect to the $ \langle \cdot \rangle $-strict topology.
\end{lemma}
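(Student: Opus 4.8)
The plan is to realize $\F^{*}$ as a Reshetnyak-type limit by squeezing it between functionals built from the lower bounds $\hat{f}_{M,K}$ (which have linear growth and hence fit into the framework of Lemma~\ref{lemma:approximation-Alibert-Bouchitte} and Theorem~\ref{theo:Reshetnyak}) and then controlling the error via Lemma~\ref{lemma:bound-h-M-K}. More precisely, suppose $u_{j} \gre{\langle\cdot\rangle} u$ in $U(\Omega)$; I want to show $\limsup_{j} \F^{*}(u_{j}) \le \F^{*}(u)$. Since $E^{s}_{\dev} u = E^{s} u$ for functions in $U(\Omega)$, the singular term only sees deviatoric directions, which is exactly why $f_{\dev}$ and the auxiliary cutoffs in $C_{M,K}$ are the relevant objects.

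First I would fix $M, K$ and consider $\hat{f}_{M,K}$, which satisfies a linear growth bound. By Lemma~\ref{lemma:approximation-Alibert-Bouchitte} applied to $-\hat{f}_{M,K}$ (after adding an affine lower bound), there is a non-decreasing sequence $g_{\ell} \in \mathbf{E}(\Omega;\M{n}{n}_{\sm})$ with $\sup_{\ell} g_{\ell} = -\hat{f}_{M,K}$ and $\sup_{\ell} g_{\ell}^{\i} = h_{-\hat{f}_{M,K}}$. For each fixed $\ell$, the Reshetnyak continuity theorem (Theorem~\ref{theo:Reshetnyak}) applies to $g_{\ell}$ along $E u_{j} \weakstar E u$ with $\langle E u_{j}\rangle(\Omega) \to \langle E u\rangle(\Omega)$ — the latter being part of the $\langle\cdot\rangle$-strict convergence — giving
\[
\lim_{j} \left[\int_{\Omega} g_{\ell}(x, \E u_{j})\x + \int_{\Omega} g_{\ell}^{\i}\big(x, \tfrac{dE^{s}u_{j}}{d|E^{s}u_{j}|}\big)\,d|E^{s}u_{j}|\right] = \int_{\Omega} g_{\ell}(x,\E u)\x + \int_{\Omega} g_{\ell}^{\i}\big(x, \tfrac{dE^{s}u}{d|E^{s}u|}\big)\,d|E^{s}u|.
\]
Since $g_{\ell} \le -\hat{f}_{M,K} \le -f \cdot 1_{C_{M+1,K}}$ and $g_{\ell}^{\i} \le h_{-\hat{f}_{M,K}}$, and using the monotone convergence theorem to pass $\ell \to \i$ on the right-hand side, I obtain a lower bound
\[
\liminf_{j}\left[-\int_{\Omega}\hat{f}_{M,K}(x,\E u_{j})\x - \int_{\Omega} h_{-\hat{f}_{M,K}}\big(x,\tfrac{dE^{s}u_{j}}{d|E^{s}u_{j}|}\big)\,d|E^{s}u_{j}|\right]
\ge -\int_{\Omega}\hat{f}_{M,K}(x,\E u)\x - \int_{\Omega} h_{-\hat{f}_{M,K}}\big(x,\tfrac{dE^{s}u}{d|E^{s}u|}\big)\,d|E^{s}u|,
\]
which after sign reversal is exactly an upper-semicontinuity statement for the $\hat{f}_{M,K}$-version of $\F^{*}$.

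The remaining work is to remove the cutoff, i.e.\ to compare the $\hat{f}_{M,K}$-functional with the genuine $\F^{*}$. For the absolutely continuous part, $\hat{f}_{M,K}(x, \E u_{j}) \to f(x,\E u_{j})$ pointwise as $M \to \i$ (for fixed $j$), and one controls the difference using the growth bound \eqref{eq:Hencky-growth} together with the $L^{1}$-convergence of $\E u_{j}$ and the $L^{2}$-convergence of $\DIV u_{j}$, which bounds the trace parts; here the fact that $u_{j}\gre{\langle\cdot\rangle}u$ gives $\int_{\Omega}\langle\cdot\rangle(\E u_{j})\to\int_{\Omega}\langle\cdot\rangle(\E u)$ and hence equiintegrability is the technical lever. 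For the singular part, Lemma~\ref{lemma:bound-h-M-K} gives precisely $-h_{-\hat{f}_{M,K}}(x_{0},P_{0}) \le -h_{-f_{\dev}}(x_{0},P_{0}) + \frac{14\beta n}{\sqrt{K}}|P_{0}|$, so the error in the singular term is bounded by $\frac{14\beta n}{\sqrt{K}}|E^{s}u|(\Omega)$ and by $\frac{14\beta n}{\sqrt{K}}|E^{s}u_{j}|(\Omega)$, both uniformly small for $K$ large since $|E^{s}u_{j}|(\Omega)$ is bounded (indeed $|Eu_{j}|(\Omega)\to|Eu|(\Omega)$). Combining: fix $\eta>0$, choose $K$ large to absorb the singular-term error, then $M$ large to absorb the a.c.-term error uniformly in $j$, apply the $\hat{f}_{M,K}$-upper-semicontinuity from the previous step, and let $\eta \to 0$.

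I expect the \textbf{main obstacle} to be the bookkeeping of the absolutely continuous error term: one must show that $\int_{\Omega}(f(x,\E u_{j}) - \hat{f}_{M,K}(x,\E u_{j}))\x \to 0$ as $M \to \i$ \emph{uniformly in $j$}, not just for each $j$. This is where the $\langle\cdot\rangle$-strict convergence (as opposed to mere weak-$*$ convergence) is essential: it upgrades $\{\langle \E u_{j}\rangle\}$ to an equiintegrable family via the Vitali-type argument already used in the proof of Lemma~\ref{lemma:equi-int-U}, and since on the region where $\hat{f}_{M,K}$ and $f$ differ one has $|\E u_{j}|$ controlled by $(\tr \E u_{j})^{2}$ up to the threshold $M$, the difference is dominated by an equiintegrable quantity supported on sets of small measure. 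Everything else — the Reshetnyak application and the monotone passage in $\ell$ — is routine once the framework is set up.
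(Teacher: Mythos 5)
Your proposal follows essentially the same route as the paper's proof: truncate $f$ to $\hat f_{M,K}$, control the discrepancy in the absolutely continuous part by the equiintegrability of $\{(\DIV u_j)^2\}$ (which comes from the $L^2$-convergence of the divergences built into the strict convergence on $U(\Omega)$), pass to the limit in the truncated functional via the Alibert--Bouchitt\'e approximation, Reshetnyak's theorem and monotone convergence, and undo the truncation on the limit $u$ using Lemma~\ref{lemma:bound-h-M-K}. The only (harmless) imprecisions are that on the $u_j$-side of the singular term the relevant inequality is not Lemma~\ref{lemma:bound-h-M-K} but the elementary monotonicity $-h_{-f_{\dev}} \le -h_{-\hat f_{M,K}}$ on $\M{n}{n}_{\dev}$ (which holds with no error at all, since $\hat f_{M,K}=f$ on deviatoric matrices and the liminf defining $h_{-\hat f_{M,K}}$ ranges over more directions), and that your displayed inequality $-\hat f_{M,K}\le -f\,1_{C_{M+1,K}}$ should be reversed, though it is never used.
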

\begin{proof}
Let us take any sequence $ u_{j} \gre{ \langle \cdot \rangle } u $ in $ U( \Omega ) $ 
and choose an arbitrary $ K \in \N $. 
Since the sequence $ \{ ( \DIV u_{j} )^{2} \}_{ j \in \N } $ is equiintegrable,
there exists $ M \in \N $ such that
\[ \int_{ \{ | \DIV u_{j} | > M \} } | \DIV u_{j}(x) |^{2} \x < \frac{1}{ K^{3} }. \]
For every $ j \in \N $ we split
\[ \int_{ \Omega } f( x , \E u_{j}(x) ) \x 
= \int_{ \{ \E u_{j} \in C_{M,K} \} } f( x , \E u_{j}(x) ) \x + \int_{ \{ \E u_{j} \not\in C_{M,K} \} } f( x , \E u_{j}(x) ) \x. \] 
For the first term obviously
\[ \int_{ \{ \E u_{j} \in C_{M,K} \} } f( x , \E u_{j}(x) ) \x
= \int_{ \{ \E u_{j} \in C_{M,K} \} } \hat{f}_{M,K}( x , \E u_{j}(x) ) \x
\le \int_{ \Omega } \hat{f}_{M,K}( x , \E u_{j}(x) ) \x. \]
By the definition $ \E u_{j}(x) \not\in C_{M,K} $ means $ | \E_{ \dev } u_{j}(x) | < K ( ( \DIV u_{j}(x) )^{2} - M^{2} ) $
and implies
\[ | \DIV u_{j}(x) | > M
\quad \mbox{and} \quad
f( x , \E u_{j}(x) ) 
\le \beta( 1 + 2 K ( \DIV u_{j}(x) )^{2} - K M^{2} ) )
\le 2 K \beta ( \DIV u_{j}(x) )^{2}. \]
Therefore, for every $ j \in \N $
\[ \int_{ \{ \E u_{j} \not\in C_{M,K} \} } f( x , \E u_{j}(x) ) \x
\le \int_{ \{ | \DIV u_{j} | > M \} } 2 K \beta ( \DIV u_{j}(x) )^{2} \x
\le \frac{ 2 \beta }{ K^{2} }. \]
The singular part $ E^{s} u $ is concentrated on $ \M{n}{n}_{\dev} $. 
Clearly $ - h_{ -f_{\dev} } \le - h_{ -\hat{f}_{M,K} } $ (on deviatoric matrices). Hence, for
\[ \hat{\F}_{M,K}(u) := 
\int_{ \Omega } \hat{f}_{M,K}( x , \E u(x) ) \x 
- \int_{ \Omega } h_{ - \hat{f}_{M,K} }( x , \tfrac{ d E^{s} u }{ d | E^{s} u | }(x) ) \ d |E^{s} u|(x) \]
we have $ \F^{*}( u_{j} ) \le \hat{\F}_{M,K}( u_{j} ) + \frac{ 2 \beta }{ K^{2} } $. 
Since $ \hat{f}_{M,K} $ grows linearly, 
we may approximate $ - \hat{f}_{M,K} $ from below according to Lemma~\ref{lemma:approximation-Alibert-Bouchitte} 
with a sequence $ \{ g_{k} \}_{ k \in \N } \subset {\mathbf E}( \Omega ; \M{n}{n}_{\sm} ) $.
Hence, for every $ k \in \N $
\begin{eqnarray*} \liminf_{ j \to \i } - \hat{\F}_{M,K}( u_{j} )
& \ge & \liminf_{ j \to \i } 
	\left( \int_{ \Omega } g_{k}( x , \E u_{j}(x) ) \x 
	+ \int_{ \Omega } g_{k}^{\i}( x , \tfrac{ d E^{s} u_{j} }{ d | E^{s} u_{j} | }(x) ) \ d | E^{s} u_{j} |(x) \right) \\
& \ge & \int_{ \Omega } g_{k}( x , \E u(x) ) \x 
	+ \int_{ \Omega } g_{k}^{\i}( x , \tfrac{ d E^{s} u }{ d | E^{s} u | }(x) ) \ d |E^{s} u|(x)
\end{eqnarray*}
since for $ g_{k} $ we may apply the Reshetnyak continuity theorem. Hence, by the monotone convergence theorem
\[ \liminf_{ j \to \i } - \hat{\F}_{M,K}( u_{j} ) \ge - \hat{\F}_{M,K}(u). \]
By gathering the results above we get
\[ \limsup_{ j \to \i } \F^{*}( u_{j} )
\le \limsup_{ j \to \i } \hat{\F}_{M,K}( u_{j} ) + \frac{ 2 \beta }{ K^{2} } 
\le \hat{\F}_{M,K}(u) + \frac{ 2 \beta }{ K^{2} }. \]
By Lemma~\ref{lemma:bound-h-M-K}
\begin{eqnarray*}
\hat{\F}_{M,K}(u) 
&  =  & \int_{ \Omega } \hat{f}_{M,K}( x , \E u(x) ) \x 
	- \int_{ \Omega } h_{ - \hat{f}_{M,K} }( x , \tfrac{ d E^{s} u }{ d | E^{s} u | }(x) ) \ d | E^{s} u |(x) \\
& \le & \int_{ \Omega } f( x , \E u(x) ) \x 
	- \int_{ \Omega } h_{ - f_{\dev} }( x , \tfrac{ d E^{s} u }{ d | E^{s} u | }(x) ) \ d | E^{s} u |(x) 
	+ \frac{ 14 \beta n }{ \sqrt{K} } | E^{s} u |( \Omega ).
\end{eqnarray*}
Altogether,
\[ \limsup_{ j \to \i } \F^{*}( u_{j} ) \le \F^{*}( u ) + \frac{ 2 \beta }{ K^{2} } + \frac{ 14 \beta n }{ \sqrt{K} } | E^{s} u |( \Omega ). \]
Since $ K $ was arbitrary, the upper semicontinuity follows. 
\end{proof}
\begin{lemma}
\label{lemma:c-strict-continuity-2}
The functional $ \F_{*} : U( \Omega ) \to \R $
\[ \F_{*}(u) := \int_{ \Omega } f( x , \E u(x) ) \x + \int_{ \Omega } h_{ f_{\dev} }( x , \tfrac{ d E^{s} u }{ d | E^{s} u | }(x) ) \ d |E^{s} u|(x) \]
is lower semicontinuous with respect to the $ \langle \cdot \rangle $-strict topology.
\end{lemma}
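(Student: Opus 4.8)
The plan is to prove this by a mirror-image argument to the one used for Lemma~\ref{lemma:c-strict-continuity-1}, exploiting the fact that lower semicontinuity of $\F_*$ is equivalent to upper semicontinuity of $-\F_*$, which has the same structure as $\F^*$ but with the roles of the upper and lower approximating densities interchanged. Concretely, I would take a sequence $u_j \gre{\langle\cdot\rangle} u$ in $U(\Omega)$, fix $K\in\N$, and use $2$-equiintegrability of $\{(\DIV u_j)^2\}$ (guaranteed, as before, by the fact that $\langle\cdot\rangle$-strict convergence in $U(\Omega)$ entails this, or by invoking Lemma~\ref{lemma:equi-int-U}) to choose $M\in\N$ with $\int_{\{|\DIV u_j|>M\}}|\DIV u_j|^2 \,dx < K^{-3}$ uniformly in $j$.

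Next I would split $\int_\Omega f(x,\E u_j)\,dx$ over the sets $\{\E u_j \in C_{M,K}\}$ and its complement. On the complement one again has the pointwise bound $f(x,\E u_j(x)) \le 2K\beta(\DIV u_j(x))^2$ together with $|\DIV u_j(x)|>M$, so this piece is controlled by $2\beta/K^2$; but since we now want a \emph{liminf} estimate from below rather than a limsup from above, I would instead discard the complement contribution using the lower growth bound $f \ge 0$ (or $f \ge \alpha(|X_{\dev}|+(\tr X)^2) \ge 0$), writing $\int_\Omega f(x,\E u_j)\,dx \ge \int_{\{\E u_j \in C_{M,K}\}} f(x,\E u_j)\,dx \ge \int_\Omega \check f_{M,K}(x,\E u_j)\,dx - (\text{error from }\check f_{M,K}\text{ outside }C_{M,K})$, using that $\check f_{M,K} = f$ on $C_{M,K}$ while $\check f_{M,K}\ge f$ everywhere and the extra term $\beta K^2\max\{(\tr X)^2 - M^2 - \tfrac1K|X_{\dev}|,0\}$ is supported where $|\DIV u_j|>M$ and is there controlled, after integration, by a constant times $K^2\int_{\{|\DIV u_j|>M\}}(\DIV u_j)^2\,dx < K^2/K^3 = 1/K$. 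For the singular part, since $E^s u$ is concentrated on $\M{n}{n}_{\dev}$ and $h_{f_{\dev}} \le h_{\check f_{M,K}}$ on deviatoric matrices, I would pass to the functional $\check\F_{M,K}(u) := \int_\Omega \check f_{M,K}(x,\E u)\,dx + \int_\Omega h_{\check f_{M,K}}(x,\tfrac{dE^s u}{d|E^s u|})\,d|E^s u|$, so that $\F_*(u_j) \ge \check\F_{M,K}(u_j) - C/K$.

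Then, because $\check f_{M,K}$ has quadratic growth rather than linear growth in general, I cannot apply the Alibert--Bouchitt\'e approximation directly to it; instead I would approximate $\check f_{M,K}$ from below by the functions $g_k \in \mathbf{E}(\Omega;\M{n}{n}_{\sm})$ of Lemma~\ref{lemma:approximation-Alibert-Bouchitte} applied to a linearly-bounded truncation, or — cleaner — apply the lower semicontinuity directly: $\check f_{M,K}$ restricted to the relevant part (where $\E u_j \in C_{M,K}$) is linearly bounded, and on the complement the extra term only helps the inequality. In any case, after extracting the $\mathbf{E}$-approximants $g_k$ with $g_k \nearrow \check f_{M,K}$ and $g_k^\i \nearrow h_{\check f_{M,K}}$, the Reshetnyak continuity theorem (Theorem~\ref{theo:Reshetnyak}) applied to each $g_k$ together with $\langle E u_j\rangle(\Omega) \to \langle E u\rangle(\Omega)$ gives $\liminf_j \check\F_{M,K}(u_j) \ge \int_\Omega g_k(x,\E u)\,dx + \int_\Omega g_k^\i(x,\tfrac{dE^s u}{d|E^s u|})\,d|E^s u|$, and letting $k\to\i$ via monotone convergence yields $\liminf_j \check\F_{M,K}(u_j) \ge \check\F_{M,K}(u)$. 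Finally, invoking the second estimate of Lemma~\ref{lemma:bound-h-M-K}, $\check\F_{M,K}(u) \ge \F_*(u) - \tfrac{14\beta n}{\sqrt{K-1}}|E^s u|(\Omega)$, we combine everything to obtain $\liminf_j \F_*(u_j) \ge \F_*(u) - C/K - \tfrac{14\beta n}{\sqrt{K-1}}|E^s u|(\Omega)$, and since $K$ is arbitrary, lower semicontinuity follows.

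The main obstacle I expect is handling the quadratic trace growth of $\check f_{M,K}$ when invoking the Alibert--Bouchitt\'e/Reshetnyak machinery, which is built for linearly bounded integrands: one must either argue that the relevant restriction is effectively linearly bounded (the region where $\E u_j$ escapes $C_{M,K}$ has small measure-weighted contribution precisely because of the divergence equiintegrability, and there the correction term $\check f_{M,K}-f$ only strengthens the desired inequality), or carefully truncate. A secondary technical point is ensuring the convexity/lower semicontinuity of $\check f_{M,K}$ in the second variable is adequate for the approximation lemma — here $\check f_{M,K}$ need not be symmetric-rank-one-convex because of the added max-term, so one should verify that Lemma~\ref{lemma:approximation-Alibert-Bouchitte} (which only requires lower semicontinuity and a one-sided linear bound on a suitable surrogate) still applies, or equivalently pass to $-h_{-(-f_*)}$-type quantities as in the previous proof. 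The bookkeeping with the two families $C_{M,K}$, $C_{M+1,K-1}$ and the two Lipschitz estimates is routine once the structure is set up as the precise dual of Lemma~\ref{lemma:c-strict-continuity-1}.
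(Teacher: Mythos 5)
Your proposal follows the paper's proof essentially verbatim: replace $f$ by $\check f_{M,K}$ at the cost of $\beta K^{2}\int_{\{|\DIV u_{j}|>M\}}(\DIV u_{j})^{2}\,dx<\beta/K$, apply Lemma~\ref{lemma:approximation-Alibert-Bouchitte} and Theorem~\ref{theo:Reshetnyak} to $\check f_{M,K}$ \emph{directly} --- no truncation is needed, since that lemma only requires the one-sided bound $g\ge-\alpha(1+|X|)$, which $\check f_{M,K}\ge 0$ satisfies despite its quadratic trace growth --- and conclude with the second estimate of Lemma~\ref{lemma:bound-h-M-K}. The only slip is the reversed inequality ``$h_{f_{\dev}}\le h_{\check f_{M,K}}$'': what is true, and what your subsequent conclusion $\F_{*}(u_{j})\ge\check\F_{M,K}(u_{j})-C/K$ actually requires, is $h_{f_{\dev}}\ge h_{\check f_{M,K}}$ on $\M{n}{n}_{\dev}$, because the $\liminf$ defining $h_{\check f_{M,K}}$ ranges over the larger approach family with $\rho\to0$ while the two integrands coincide on trace-free matrices.
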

\begin{proof}
The proof is very similar to the previous one. Let us therefore just point out the differences.
Here we employ $ \check{f}_{M,K} $, which yields the lower bound
\begin{eqnarray*}
\int_{ \Omega } f( x , \E u_{j}(x) ) \x 
& \ge & \int_{ \Omega } \check{f}_{M,K}( x , \E u_{j}(x) ) \x 
	- \beta K^{2} \int_{ \{ \E u_{j} \not\in C_{M,K} \} } ( \DIV u_{j} )^{2} \x \\
& \ge & \int_{ \Omega } \check{f}_{M,K}( x , \E u_{j}(x) ) \x 
	- \frac{ \beta }{K}.
\end{eqnarray*}
Since on deviatoric matrices $ h_{ f_{\dev} } \ge h_{ \check{f}_{M,K} } $, we arrive at
$ \F_{*}( u_{j} ) \ge \check{\F}_{M,K}( u_{j} ) - \frac{ \beta }{K} $ where
\[ \check{\F}_{M,K}(u) := 
\int_{ \Omega } \check{f}_{M,K}( x , \E u(x) ) \x 
+ \int_{ \Omega } h_{ \check{f}_{M,K} }( x , \tfrac{ d E^{s} u }{ d | E^{s} u | }(x) ) \ d |E^{s} u|(x). \]
The function $ \check{f}_{M,K} $ meets the assumptions of Lemma~\ref{lemma:approximation-Alibert-Bouchitte}.
By the same argumentation and by Lemma~\ref{lemma:bound-h-M-K} we get
\[ \liminf_{ j \to \i } \F_{*}( u_{j} ) \ge \F_{*}( u ) - \frac{ \beta }{K} - \frac{ 14 \beta n }{ \sqrt{K-1} } | E^{s} u |( \Omega ). \qedhere \]
\end{proof}
\begin{lemma}
\label{lemma:c-strict-continuity-3}
For every $ x_{0} \in \Omega $ and symmetric rank-one matrix $ P_{0} \in \M{n}{n}_{\dev} $ 
\[ ( f_{\dev} )^{\i}( x_{0} , P_{0} ) = h_{ f_{\dev} }( x_{0} , P_{0} ) = - h_{ - f_{\dev} }( x_{0} , P_{0} ). \]
\end{lemma}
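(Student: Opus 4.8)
The plan is to establish the two-sided chain
\[ -h_{-f_{\dev}}(x_0,P_0) \ \le\ (f_{\dev})^{\i}(x_0,P_0) \ \le\ h_{f_{\dev}}(x_0,P_0), \]
after which we are done: since a $\liminf$ never exceeds the corresponding $\limsup$ over the same collection of sequences, one automatically has $h_{f_{\dev}}(x_0,P_0)\le -h_{-f_{\dev}}(x_0,P_0)$, and all three quantities coincide. The first step is a reduction to the single ray $t\mapsto tP_0$. By the first part of Remark~\ref{remark:c-strict-continuity}, $f_{\dev}(x,\cdot)$ is globally Lipschitz on $\M{n}{n}_{\dev}$ with a constant $C=C(n,\beta)$ independent of $x$, so $\big|\tfrac1t f_{\dev}(x,tP)-\tfrac1t f_{\dev}(x,tP_0)\big|\le C|P-P_0|$; hence the parameter $P\to P_0$ may be dropped from the $\limsup$ defining $(f_{\dev})^{\i}$ and from the $\liminf$/$\limsup$ defining $h_{f_{\dev}}$ and $-h_{-f_{\dev}}$. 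Writing $\phi_t(x):=\tfrac1t\big(f_{\dev}(x,tP_0)-f_{\dev}(x,0)\big)$, I would record the two structural facts used throughout: $\phi_t$ is continuous in $x$ (as $f$ is continuous), and, because $P_0$ is symmetric rank-one and $f_{\dev}(x,\cdot)$ is convex along $\R P_0$, the quotient $t\mapsto\phi_t(x)$ is non-decreasing with $\sup_{t>0}\phi_t(x)=\lim_{t\to\i}\phi_t(x)=(f_{\dev})^{\i}(x,P_0)$ for every $x\in\Omega$ (this is Remark~\ref{remark:c-strict-continuity}, applied at each point, combined with the uniform Lipschitz reduction and the fact that $\tfrac1t f_{\dev}(x,0)\to 0$).

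For the \emph{upper bound} I would use the monotonicity to get the uniform envelope $\phi_t(x)\le (f_{\dev})^{\i}(x,P_0)$ and combine it with the pointwise estimate $|f_{\dev}(x,0)|\le\beta$ from \eqref{eq:Hencky-growth}, yielding
\[ \tfrac1t f_{\dev}(x,tP_0) \ =\ \tfrac{f_{\dev}(x,0)}{t}+\phi_t(x) \ \le\ \tfrac{\beta}{t}+(f_{\dev})^{\i}(x,P_0) \qquad(x\in\Omega,\ t>0). \]
Now the continuity hypothesis on $x_0\mapsto(f_{\dev})^{\i}(x_0,P_0)$ enters: given $\eta>0$ it bounds $(f_{\dev})^{\i}(x,P_0)$ by $(f_{\dev})^{\i}(x_0,P_0)+\eta$ for $x$ near $x_0$, while $\beta/t$ is small for $t$ large, so $-h_{-f_{\dev}}(x_0,P_0)\le (f_{\dev})^{\i}(x_0,P_0)+\eta$; letting $\eta\to0$ closes this half. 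For the \emph{lower bound} I would fix a level $s>0$ and use monotonicity in the other direction: for $t\ge s$,
\[ \tfrac1t f_{\dev}(x,tP_0) \ =\ \tfrac{f_{\dev}(x,0)}{t}+\phi_t(x) \ \ge\ -\tfrac{\beta}{t}+\phi_s(x), \]
and letting $x\to x_0$, $t\to\i$ while using continuity of the single function $\phi_s$ gives $h_{f_{\dev}}(x_0,P_0)\ge\phi_s(x_0)$; taking the supremum over $s>0$ produces $h_{f_{\dev}}(x_0,P_0)\ge\sup_{s>0}\phi_s(x_0)=(f_{\dev})^{\i}(x_0,P_0)$.

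The one genuine difficulty is the interchange of the two limits $x\to x_0$ and $t\to\i$: a priori the rate at which $\tfrac1t f_{\dev}(x,tP_0)$ reaches its recession value could degenerate as $x\to x_0$. This is exactly what the convexity-induced monotonicity of $\phi_t$ takes care of — it supplies the clean uniform upper envelope $(f_{\dev})^{\i}(\cdot,P_0)$ and lets the $\liminf$ be probed from below by the continuous finite-$t$ functions $\phi_s$ — and it is also why the continuity assumption on $(f_{\dev})^{\i}(\cdot,P_0)$ is built into Proposition~\ref{prop:c-strict-continuity}: without it the upper-bound step above would fail.
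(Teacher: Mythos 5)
Your proof is correct and follows essentially the same route as the paper: the same three-term decomposition of $\tfrac{1}{t}f_{\dev}(x,tP)$, the same Lipschitz reduction in $P$, and the same use of convexity along the symmetric rank-one direction $P_0$ to obtain monotone difference quotients converging to $(f_{\dev})^{\i}(\cdot,P_0)$. The only difference is presentational: where the paper invokes Dini's lemma to upgrade the monotone pointwise convergence to uniform convergence near $x_0$, you unpack that step into the elementary two-sided sandwich (the upper envelope $(f_{\dev})^{\i}(\cdot,P_0)$ together with its assumed continuity for the $\limsup$ bound, and the continuous finite-$t$ minorants $\phi_s$ for the $\liminf$ bound), which is equivalent.
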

\item 
The proof is exactly the same as in \cite[Lemma~1]{KristensenRindler-Relax}.
\begin{proof}
Fix any $ x_{0} \in \Omega $ and any symmetric rank-one matrix $ P_{0} \in \M{n}{n}_{\dev} $.
Then
\[ \frac{ f_{\dev}( x , t P ) }{t}
= \frac{ f_{\dev}( x , t P ) - f_{\dev}( x , t P_{0} ) }{t}
+ \frac{ f_{\dev}( x , t P_{0} ) - f_{\dev}( x , 0 ) }{t}
+ \frac{ f_{\dev}( x , 0 ) }{t} \]
By Remark~\ref{remark:c-strict-continuity} we have
\[ \frac{ | f_{\dev}( x , t P ) - f_{\dev}( x , t P_{0} ) | }{t}
\le \frac{ C | t P - t P_{0} | }{t}
= C | P - P_{0} |, \]
and we know that the functions
\[ g_{t}(x) := \frac{ f_{\dev}( x , t P_{0} ) - f_{\dev}( x , 0 ) }{t} \]
make a monotonically increasing family with
\[ g_{t}(x) \nearrow ( f_{\dev} )^{\i}( x , P_{0} ) \]
for every $ x \in \Omega $. 
All $ g_{t} $ are continuous, and for $ ( f_{\dev} )^{\i}( \prostor , P_{0} ) $ continuity was an assumption (see Proposition~\ref{prop:c-strict-continuity}).
Therefore, we may apply Dini's Lemma. 
Hence,  
\[ g_{t} \nearrow ( f_{\dev} )^{\i}( \prostor , P_{0} ) \]
uniformly on $ \{ x \in \Omega : | x - x_{0} | \le \frac{1}{k} \} $ for every $ k \in \N $. 
Clearly we have also $ | \frac{ f_{\dev}( x , 0 ) }{t} | \le \frac{ \beta }{t} $.
Gathering all the estimates yields
\begin{eqnarray*}
h_{ f_{\dev} }( x_{0} , P_{0} )
&  =  & \liminf \left\{ \frac{ f_{\dev}( x , t P ) }{t} : x \to x_{0}, \ P \to P_{0}, \ t \to \i \right\} \\
&  =  & \liminf_{ x \to x_{0} } \ ( f_{\dev} )^{\i}( x , P_{0} ) \\
&  =  & ( f_{\dev} )^{\i}( x_{0} , P_{0} )
\end{eqnarray*}
as well as
\[ - h_{ -f_{\dev} }( x_{0} , P_{0} )
= \limsup_{ x \to x_{0} } \ ( f_{\dev} )^{\i}( x , P_{0} ) 
= ( f_{\dev} )^{\i}( x_{0} , P_{0} ). \qedhere \]
\end{proof}
\begin{proof}[Proof of Proposition~\ref{prop:c-strict-continuity}]
According to Theorem~\ref{theo:BD-Alberti},
for $ | E^{s} u | $-a.e.~$ x \in \Omega $, 
the Radon-Nikodym derivative $ \tfrac{ d E^{s} u }{ d | E^{s} u | }(x) $ is a symmetric rank-one matrix.
By Lemma~\ref{lemma:c-strict-continuity-3} 
the functionals from Lemmas~\ref{lemma:c-strict-continuity-1} and \ref{lemma:c-strict-continuity-2} coincide with $ \F $. 
\end{proof}
\end{trivlist}
\subsection{$\liminf$-inequality at zero hardening}
\label{section:Hencky-liminf}
\begin{trivlist}
\item
Now we turn our attention to the $ \liminf $-inequality. 
Let us take any $ u \in L^{1}( \Omega ; \R^{n} ) $,
and choose $ \eps_{j} \searrow 0 $ and $ u_{j} \to u $ in $ L^{1}( \Omega ; \R^{n} ) $.
Clearly, if 
$ \liminf_{ j \to \i } \F_{ \eps_{j} }( u_{j} ) = \i, $
there is nothing to be proved. If 
\[ \liminf_{ j \to \i } \F_{ \eps_{j} }( u_{j} ) < \i, \]
there exists a subsequence $ \{ j_{k} \}_{ k \in \N } $ such that
\[ \liminf_{ j \to \i } \F_{ \eps_{j} }( u_{j} ) = \lim_{ k \to \i } \F_{ \eps_{ j_{k} } }( u_{ j_{k} } ) \]
with all elements being finite. Hence, $ \{ u_{ j_{k} } \}_{ k \in \N } $ is bounded in $ LU( \Omega ) $,
and there exists a further (not relabelled) sequence that weakly converges in $ U( \Omega ) $ (see Subsection~\ref{subsection:U}).
Therefore, $ u \in U( \Omega ) $.
By Theorem~\ref{theo:BD-embeddings}, $ u_{j_{k}} \to u $ in $ L^{q}( \Omega ; \R^{n} ) $ for all $ 1 < q < \frac{n}{n-1} $.  
Moreover, we may also achieve that the measures 
\[ \mu_{k} := f( \tfrac{ \cdot }{ \eps_{j_{k}} } , \E u_{j_{k}}( \cdot ) ) \ \L^{n} \]
weakly-$*$ converge to some $ \mu $ in $ M( \Omega ; \R^{n} ). $ 
Let 
\[ \mu = g {\cal L}^{n} + \mu^{s} \]
be the decomposition according to the Radon-Nikodym theorem. 
Our aim will be to determine the derivative $g$ and the singular part $ \mu^{s} $,
as 
\begin{equation}
\label{eq:liminf}
\liminf_{ j \to \i } \F_{ \eps_{j} }( u_{j} )
= \lim_{ k \to \i } \mu_{k}( \Omega )
\ge \mu( \Omega )
= \int_{ \Omega } g(x) \x + \mu^{s}( \Omega ). 
\end{equation}
\item
The discussion above has shown that we may restrict ourselves to the following setting:
We consider arbitrary $ u \in U( \Omega ) $, $ \eps_{j} \searrow 0 $ 
and a bounded sequence $ \{ u_{j} \}_{ j \in \N } \subset LU( \Omega ) $ such that
\begin{itemize}
\item $ \lim_{ j \to \i } \F_{ \eps_{j} }( u_{j} ) $ exists,
\item $ u_{j} \weakly u $ in $ U( \Omega ) $ and $ u_{j} \to u $ in $ L^{q}( \Omega ; \R^{n} ) $ for a fixed $ 1 < q < \frac{n}{n-1} $, 
\item $ f( \tfrac{ \cdot }{ \eps_{j} } , \E u_{j}( \cdot ) ) \ \L^{n} =: \mu_{j} 
\weakstar \mu =: g {\cal L}^{n} + \mu^{s} $ in $ M( \Omega ; \R^{n} ). $
\end{itemize}
\item 
In the following two subsections we will bound $ g $ and $ \mu^{s} $ from below in regular and singular points, respectively. This leads to the conclusion of the proof of  Theorem~\ref{theo:Hencky-epilog}: 
\begin{proof}[Proof of Theorem~\ref{theo:Hencky-epilog}, part~2.]
The $\liminf$-inequality in Theorem~\ref{theo:Hencky-epilog} is a direct consequence of \eqref{eq:liminf}, Lemma~\ref{lemma:liminf-regular} and Lemma~\ref{lemma:liminf-singular}. 
\end{proof}
\item
Also the proof of Theorem~\ref{theo:Hencky-commutability} is now a direct consequence of our considerations. 
\begin{proof}[Proof of Theorem~\ref{theo:Hencky-commutability}.]
The upper and the left-hand arrows were derived in Section~\ref{subsection:Hencky-setting}
and depicted in \eqref{eq:Hencky-Gamma-begin}.
Both right-hand arrows follow from Theorem~\ref{theo:Hencky-epilog} 
and general properties of {$ \Gamma $-convergence}.
Finally, \eqref{eq:Hencky-Gamma-delta}, \eqref{eq:Hencky-limsup} and \eqref{eq:Hencky-density} imply
\[ \Gamma( L^{1} ) \mbox{-} \lim_{ \delta \to 0 } \F^{(\delta)}_{\hom} = \F_{\hom}. \qedhere \] 
\end{proof}
\end{trivlist}
\subsubsection{Regular points}
\begin{lemma}
\label{lemma:liminf-regular}
For a.e.~$ x_{0} \in \Omega $, it holds
\[ g( x_{0} ) \ge f_{ \hom }( \E u( x_{0} ) ). \]
\end{lemma}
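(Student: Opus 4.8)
The plan is to argue via blow-up at a generic point $x_0$. Fix $x_0 \in \Omega$ which is simultaneously a Lebesgue point of $\E u$, a point where $u$ is $L^q$-approximately differentiable in the sense of Corollary~\ref{cor:BD-approx-diff-L1*} (with $\nabla u(x_0)$ and $\E u(x_0) = \sm\nabla u(x_0)$), a point where $x_0$ is the Radon--Nikod\'ym point of $\mu$ so that $g(x_0) = \lim_{r\to 0} \mu(B_r(x_0))/|B_r(x_0)|$ exists and is finite, and a point with $\lim_{r\to 0} r^{-n}|E^s u|(B_r(x_0)) = 0$; by Besicovitch differentiation a.e.\ $x_0$ qualifies. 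The goal is to extract from the sequence $\{u_j\}$, after localizing to balls $B_r(x_0)$ and rescaling, competitors for the cell formula defining $f_{\hom}$ evaluated at $X_0 := \E u(x_0)$, using the ergodic representation in Proposition~\ref{prop:hom-density-U} over the convex set $A = B_1(0)$.

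The key steps I would carry out, in order: First, choose a sequence $r_i \to 0$ (along which the various limits above are realized and such that $\mu(\partial B_{r_i}(x_0)) = 0$) and, by a diagonal argument, select $j(i) \to \infty$ so that simultaneously $\eps_{j(i)} \ll r_i$, the $L^q$-rescaled error of $u_{j(i)}$ on $B_{r_i}(x_0)$ relative to the affine map $x \mapsto u(x_0) + \nabla u(x_0)(x-x_0)$ is small, and $\mu_{j(i)}(B_{r_i}(x_0)) / |B_{r_i}(x_0)| \to g(x_0)$. Second, rescale: set $w_i(y) := r_i^{-1}\big(u_{j(i)}(x_0 + r_i y) - u(x_0) - r_i \nabla u(x_0) y\big)$ on $B_1(0)$, so $w_i \to 0$ in $L^q(B_1(0);\R^n)$ and $\E w_i(y) = \E u_{j(i)}(x_0 + r_i y) - X_0$; then $\frac{1}{|B_1|}\int_{B_1} f\big(\tfrac{x_0 + r_i y}{\eps_{j(i)}}, X_0 + \E w_i(y)\big)\,dy \to g(x_0)$. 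Third, and this is the crux, modify $w_i$ on a thin boundary layer of $B_1(0)$ to make it compactly supported while controlling the energy and, crucially, keeping the divergence in $L^2$: since $f$ only has linear growth in the deviatoric part I can cut off $w_i$ using a partition-of-unity argument à la De Giorgi slicing in an annulus $B_1 \setminus B_{1-\theta}$, but the resulting $\DIV$ of the cut-off field need not be $L^2$, so I would correct it by subtracting a Bogovskii-type potential solving $\DIV z_i = \DIV(\text{cut-off field}) - (\text{average})$ with $z_i \in W^{1,2}_0$, whose $W^{1,2}$-norm is controlled by the $L^2$-norm of that divergence (which is small on the thin layer because $\{(\DIV u_j)^2\}$ can be assumed equiintegrable after Lemma~\ref{lemma:equi-int-U}). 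Fourth, the modified fields are admissible test functions (after a further scaling to the cube $\eps_{j(i)}^{-1}B_1$) for the infimum in the ergodic formula of Proposition~\ref{prop:hom-density-U}, whence the rescaled energies are bounded below by $f_{\hom}(X_0) - o(1)$; combined with step two this gives $g(x_0) \ge f_{\hom}(X_0)$.

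The main obstacle I anticipate is step three: the non-locality of the constraint $\DIV u \in L^2$ means that the standard cut-off used in linear-growth $BD$ (or $BV$) blow-up arguments destroys membership in $LU$, and one must simultaneously (i) keep the deviatoric energy from blowing up (only linear growth there, so a genuine De Giorgi slicing over $O(1/\theta)$ annuli is needed to average out the "matching" error between $w_i$ and the boundary datum $0$), (ii) keep the trace/divergence square-integrable with small excess energy, and (iii) ensure the Bogovskii correction does not reintroduce a large deviatoric contribution — here one uses that its $W^{1,2}$-norm, hence its $L^2$-norm, is small, and that $f$ grows only linearly (not quadratically) in $\E z_i{}_{,\dev}$, so a small $L^2$-norm of $\E z_i$ gives a small energy contribution on the thin layer. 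Reconciling these three requirements with a single choice of slicing width $\theta = \theta(i) \to 0$ coupled to the diagonal index $j(i)$ is the delicate bookkeeping; once that is set up, the passage to the cell formula is routine via Proposition~\ref{prop:hom-density-U} applied to $A = B_1(0)$ together with the fact, noted after~\eqref{eq:homogenization-formula}, that $C_c^\infty$ and $LU_0$ test functions give the same infimum.
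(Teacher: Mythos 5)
Your overall strategy coincides with the paper's: select $x_0$ as a Besicovitch point where $u$ is $L^q$-differentiable (Corollary~\ref{cor:BD-approx-diff-L1*}), compare $u_j$ with the affine map $\tilde u(x)=u(x_0)+\nabla u(x_0)(x-x_0)$ on small balls, glue by De Giorgi slicing, repair the divergence with Bogovskii's operator (Theorem~\ref{theo:Galdi}), and feed the resulting fields into the ergodic cell formula of Proposition~\ref{prop:hom-density-U}. The difference in presentation (blow-up to $B_1(0)$ with a shrinking layer $\theta(i)$ coupled to a diagonal index, versus the paper's fixed $\lambda<1$ and $\nu$ annuli in $B_\rho\setminus B_{\lambda\rho}$ with limits taken in the order $j\to\infty$, $\rho\to0$, then $\lambda\to1$, $\nu\to\infty$) is inessential.

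There is, however, a genuine gap in your step three. The term that destroys membership in $LU$ after the cut-off is $\nabla\varphi_i\cdot(u_j-\tilde u)$, and since $u_j-\tilde u$ is controlled only in $L^q$ with $q<\tfrac{n}{n-1}\le 2$ (which is all that the embedding $BD\hookrightarrow L^{n/(n-1)}$ and the $L^q$-differentiability provide), this datum is not in $L^2$. Consequently you cannot take the Bogovskii potential in $W^{1,2}_0$ with its norm ``controlled by the $L^2$-norm of that divergence'': that $L^2$-norm is not finite in general, and you yourself note one line earlier that the divergence of the cut-off field need not be in $L^2$. Moreover, the equiintegrability of $(\DIV u_j)^2$ from Lemma~\ref{lemma:equi-int-U} is not the mechanism that makes this term small --- it would control $\varphi_i\DIV u_j$, which was never the problem --- and it is in fact not used in the paper's proof of this lemma. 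The correct repair is to solve $\DIV z_{j,i}=-\nabla\varphi_i\cdot(u_j-\tilde u)+\zeta_{j,i}$ in $W^{1,q}_0$ of the annulus, where $\zeta_{j,i}$ is the mean of the bad term over the annulus (needed for solvability); the modified field then has divergence $(1-\varphi_i)\DIV\tilde u+\varphi_i\DIV u_j+\zeta_{j,i}$, which lies in $L^2$ simply because $\zeta_{j,i}$ is a constant. The smallness of the extra contributions --- the $L^1$-norm of $\E_{\dev}z_{j,i}$, estimated by H\"older from the $W^{1,q}$ bound, and the term $\zeta_{j,i}^2$ --- then comes from the $L^q$-differentiability of $u$ at $x_0$ combined with the averaging over the $\nu$ annuli, after sending $j\to\infty$ and then $\rho\to0$. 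With step three amended in this way, the rest of your plan goes through as in the paper.
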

\begin{trivlist}
\item 
We will follow the strategy of the proofs of \cite[Propositions~11.2.3~and~12.3.2]{AttouchButtazzoMichaille}.
\end{trivlist}
\begin{proof}
By the Besicovitch derivation theorem~\ref{theo:Besicovitch}, for a.e.~$ x_{0} \in \Omega $ we have
\begin{equation} 
\label{eq:Besicovitch}
g( x_{0} ) = \lim_{ \rho \to 0 } \frac{ \mu( B_{\rho}( x_{0} ) ) }{ | B_{\rho}( x_{0} ) | }. 
\end{equation}
For all but countable many $ \rho > 0 $ it holds 
\[ \mu( B_{\rho}( x_{0} ) ) = \lim_{ j \to \i } \mu_{j}( B_{\rho}( x_{0} ) ). \]
Therefore, it must be shown for a.e.~$ x_{0} \in \Omega $
\[ \lim_{ \rho \to 0 } \lim_{ j \to \i }
\frac{ \mu_{j}( B_{\rho}( x_{0} ) ) }{ | B_{\rho}( x_{0} ) | } \ge f_{\hom}( \E u( x_{0} ) ) \]
(whereby we exclude the exceptional sequence of $ \rho $'s). 
\item
Let us take and fix any $ x_{0} $ where the formula~\eqref{eq:Besicovitch} holds 
and where the function $u$ is $ L^{q} $-differentiable (see Corollary~\ref{cor:BD-approx-diff-L1*}), and define
\[ \tilde{u}(x) := u( x_{0} ) + \nabla u( x_{0} ) \ ( x - x_{0} ). \]
We may also suppose $ \E \tilde{u} = \E \tilde{u}( x_{0} ) = \sm \nabla u( x_{0} ) $.
Our strategy is to approximate $ u $ with $ \tilde{u} $ and to use the slicing method of De Giorgi. Therefore,
choose any $ \nu \in \N $ and $ 0 < \lambda < 1 $ and define
\[ \rho_{0} := \lambda \rho
\qquad \mbox{and} \qquad
B_{i} := B_{ \rho_{0} + \frac{i}{ \nu }( \rho - \rho_{0} ) }( x_{0} ),
\quad i = 0 , \ldots , \nu. \]
Furthermore, we take for every $ i = 1 , \ldots , \nu $ also cut-off functions $ \varphi_{i} \in C_{c}^{\i}( B_{i} ) $
such that
\[ 0 \le \varphi_{i} \le 1,
\quad
\varphi_{i} = 1 \mbox{ in } B_{i-1}
\quad \mbox{and} \quad
\| \nabla \varphi_{i} \|_{ L^{\i} } \le \frac{ 2 \nu }{ \rho - \rho_{0} }. \]
Let
\[ \tilde{u}_{j,i} := \tilde{u} + \varphi_{i}( u_{j} - \tilde{u} ) \in L^{1}( \Omega ; \R^{n} ) . \]
Because of
\[ \E \tilde{u}_{j,i} 
= ( 1 - \varphi_{i} ) \E \tilde{u} + \varphi_{i} \E u_{j} + \nabla \varphi_{i} \odot ( u_{j} - \tilde{u} ), \]
the functions $ \tilde{u}_{j,i} $ lie in $ LD( \Omega ) $, but perhaps not in $ LU( \Omega ) $ since 
\[ \DIV \tilde{u}_{j,i} 
= ( 1 - \varphi_{i} ) \DIV \tilde{u} + \varphi_{i} \DIV u_{j} + \nabla \varphi_{i} \cdot ( u_{j} - \tilde{u} ), \]
and the last term in general lies only in $ L^{ \frac{n}{n-1} } $. 
We will correct this with the results of Bogovskii from Theorem~\ref{theo:Galdi}.
For that reason define
\[ \zeta_{j,i} := 
\frac{1}{ | B_{i} \setminus B_{i-1} | } \int_{ B_{i} \setminus B_{i-1} } 
\nabla \varphi_{i}(x) \cdot ( u_{j}(x) - \tilde{u}(x) ) \x. \]
By Theorem~\ref{theo:Galdi} there exist $ z_{j,i} \in W^{1,q}_{0}( B_{i} \setminus \overline{ B_{i-1} } ) $ such that
\[ \DIV z_{j,i} = - \nabla \varphi_{i} \cdot ( u_{j} - \tilde{u} ) + \zeta_{j,i} \]
with
\[ \| z_{j,i} \|_{ W^{1,q}( B_{i} \setminus \overline{ B_{i-1} } ) } 
\le C \| \nabla \varphi_{i} \cdot ( u_{j} - \tilde{u} ) \|_{ L^{ q }( B_{i} \setminus \overline{ B_{i-1} } ) }
\le \frac{ 2 C \nu }{ \rho - \rho_{0} } \| u_{j} - \tilde{u} \|_{ L^{ q }( B_{i} \setminus \overline{ B_{i-1} } ) }. \]
We take such constant $C$ that the inequality holds for all $i$. 
It is scaling and translation invariant, so we may transfer the situation to $ B_{1}(0) $.
Therefore, $C$ does not depend on $ \rho $. 
Although it depends on $ \nu $ and $ \lambda $, this will not cause any troubles since we will first send $ j \to \i $. 
Now define $ u_{j,i} := \tilde{u}_{j,i} + z_{j,i} \in LU( \Omega ) $.
It is elementary to see that 
\[ u_{j,i} - \tilde{u} = \varphi_{i}( u_{j} - \tilde{u} ) + z_{j,i} \in LU_{0}( B_{ \rho }( x_{0} ) ). \]
(We refer to \cite{Jesenko:16} for a detailed argument.) For every $ i = 1 , \ldots , \nu $
\begin{eqnarray*} 
&     & \!\!\!\!\!\!\!\!\!\!
		f_{\hom}( \E u( x_{0} ) ) \\
&  =  & \lim_{ j \to \i } \inf \left\{ 
		\frac{1}{ | \frac{1}{ \eps_{j} } B_{ \rho }( x_{0} ) | } 
		\int_{ \frac{1}{ \eps_{j} } B_{ \rho }( x_{0} ) } f( x , \E u( x_{0} ) + \E \varphi(x) ) \x :
		\varphi \in LU_{0}( \tfrac{1}{ \eps_{j} } B_{ \rho }( x_{0} ) , \R^{n} )
		\right\} \\
&  =  & \lim_{ j \to \i } \inf \left\{ 
		\frac{1}{ | B_{ \rho }( x_{0} ) | } 
		\int_{ B_{ \rho }( x_{0} ) } f( \tfrac{x}{ \eps_{j} } , \E u( x_{0} ) + \E \varphi(x) ) \x :
		\varphi \in LU_{0}( B_{ \rho }( x_{0} ) , \R^{n} )
		\right\} \\		
& \le & \liminf_{ j \to \i } \frac{1}{ | B_{ \rho }( x_{0} ) | }
		\int_{ B_{ \rho }( x_{0} ) } f \big( \tfrac{x}{ \eps_{j} } , \E u_{j,i}(x) \big) \x,	
\end{eqnarray*}
and therefore also
\[ f_{\hom}( \E u( x_{0} ) ) 
\le \liminf_{ j \to \i } \frac{1}{ \nu } \sum_{ i=1 }^{ \nu } \frac{1}{ | B_{ \rho }( x_{0} ) | }
		\int_{ B_{ \rho }( x_{0} ) } f \big( \tfrac{x}{ \eps_{j} } , \E u_{j,i}(x) \big) \x. \]
For every $i$ we split
\begin{align*}
&    \int_{ B_{ \rho }( x_{0} ) } f \big( \tfrac{x}{ \eps_{j} } , \E u_{j,i}(x) \big) \x \\
& = \ \int_{ B_{i-1} } f \big( \tfrac{x}{ \eps_{j} } , \E u_{j}(x) \big) \x 
+ \int_{ B_{i} \setminus B_{i-1} } f \big( \tfrac{x}{ \eps_{j} } , \E u_{j,i}(x) \big) \x 
+ \int_{ B_{ \rho }( x_{0} ) \setminus B_{i} } f \big( \tfrac{x}{ \eps_{j} } , \E u( x_{0} ) \big) \x \\
& =: \  I_{j,i}^{(1)} + I_{j,i}^{(2)} + I_{j,i}^{(3)}. 
\end{align*}
The first term can be bounded simply by
\[ I_{j,i}^{(1)} \le \int_{ B_{ \rho }( x_{0} ) } f \big( \tfrac{x}{ \eps_{j} } , \E u_{j}(x) \big) \x \]
and the last one by
\[ I_{j,i}^{(3)}
\le \beta n ( 1 - \lambda ) | B_{ \rho }( x_{0} ) | \big( 1 + | \E_{ \dev } u( x_{0} ) | + | \DIV u( x_{0} ) |^{2} \big). \]
The second term we bound by the upper bound on $f$
\[ f \big( \tfrac{x}{ \eps_{j} } , \E u_{j,i}(x) \big) 
\le \beta( 1 + | \E_{ \dev } u_{j,i}(x) | + ( \DIV u_{j,i}(x) )^{2} ). \]
First,
\begin{eqnarray*}
| \E_{ \dev } u_{j,i} | 
&  =  & | ( 1 - \varphi_{i} ) \E_{ \dev }  \tilde{u} + \varphi_{i} \E_{ \dev }  u_{j} + \dev \sm( \nabla \varphi_{i} \otimes ( u_{j} - \tilde{u} ) ) + \E_{ \dev } z_{j,i} | \\
& \le & | \E_{ \dev }  \tilde{u} | + | \E_{ \dev }  u_{j} | + | \nabla \varphi_{i} | | u_{j} - \tilde{u} | + | \E_{ \dev } z_{j,i} |.
\end{eqnarray*}
First we bound the last two terms
\[ \int_{ B_{i} \setminus B_{i-1} } | \nabla \varphi_{i}(x) | | u_{j}(x) - \tilde{u}(x) | \x
\le \frac{ 2 \nu | B_{i} \setminus B_{i-1} |^{1/q'} }{ ( 1 - \lambda ) \rho } 
\left( \int_{ B_{i} \setminus B_{i-1} } | u_{j}(x) - \tilde{u}(x) |^{q} \x \right)^{1/q} \]
and 
\begin{eqnarray*}
\int_{ B_{i} \setminus B_{i-1} } | \E_{ \dev } z_{j,i}(x) | \x
& \le & | B_{i} \setminus B_{i-1} |^{1/q'} \left( \int_{ B_{i} \setminus B_{i-1} } | \E_{ \dev } z_{j,i}(x) |^{q} \x \right)^{1/q} \\
& \le & \frac{ 2 C \nu | B_{i} \setminus B_{i-1} |^{1/q'} }{ ( 1 - \lambda ) \rho } 
		\left( \int_{ B_{i} \setminus B_{i-1} } | u_{j}(x) - \tilde{u}(x) |^{q} \x \right)^{1/q}.
\end{eqnarray*}
($ q' $ stands for the H\"{o}lder conjugate of $q$.)
Thus, by applying $ | B_{i} \setminus B_{i-1} | \le \frac{ n ( 1 - \lambda ) }{ \nu } | B_{ \rho }( x_{0} ) | $, we arrive at
\begin{align*}
& \int_{ B_{i} \setminus B_{i-1} } | \E_{ \dev } u_{j,i}(x) | \x \\
& \le \ | B_{i} \setminus B_{i-1} | | \E_{ \dev }  u( x_{0} ) |
        + \int_{ B_{i} \setminus B_{i-1} } | \E_{ \dev } u_{j}(x) | \x + \\
&     \ + \frac{ 2 ( 1 + C ) \nu^{1/q} }{ ( 1 - \lambda )^{ 1/q }  } n^{1/q'} | B_{ \rho }( x_{0} ) |^{1/q'}
		\left( \int_{ B_{i} \setminus B_{i-1} }  \frac{ | u_{j}(x) - \tilde{u}(x) |^{q} }{ \rho^{q} } \x \right)^{1/q}. 
\end{align*}
Also
\[ ( \DIV u_{j,i} )^{2}
= \big( ( 1 - \varphi_{i} ) \DIV \tilde{u} + \varphi_{i} \DIV u_{j} + \zeta_{j,i} \big)^{2}
\le 3 ( \DIV \tilde{u} )^{2} + 3 ( \DIV u_{j} )^{2} + 3 \zeta_{j,i}^{2}, \]
and therefore,
\begin{eqnarray*}
\int_{ B_{i} \setminus B_{i-1} } ( \DIV u_{j,i}(x) )^{2} \x 
& \le & 3 | B_{i} \setminus B_{i-1} | ( \DIV u( x_{0} ) )^{2} + \\
&     & + 3 \int_{ B_{i} \setminus B_{i-1} }( \DIV u_{j}(x) )^{2} \x
		+ 3 \frac{ n ( 1 - \lambda ) }{ \nu } | B_{ \rho }( x_{0} ) | \zeta_{j,i}^{2}.
\end{eqnarray*}
For a bound for the last term, we proceed as above
\begin{eqnarray*}
| \zeta_{j,i} | 
& \le & \frac{1}{ | B_{i} \setminus B_{i-1} | } \int_{ B_{i} \setminus B_{i-1} } | \nabla \varphi_{i}(x) | | u_{j}(x) - \tilde{u}(x) | \x \\
& \le & \frac{ 2 \nu }{ ( 1 - \lambda ) | B_{i} \setminus B_{i-1} |^{1/q} } 
		\left( \int_{ B_{i} \setminus B_{i-1} }  \frac{ | u_{j}(x) - \tilde{u}(x) |^{q} }{ \rho^{q} } \x \right)^{1/q} \\
& \le & \frac{ 2 \nu^{ 1 + 1/q } }{ n^{1/q} \lambda^{n/q} ( 1 - \lambda )^{ 1 + 1/q } } 
		\left( \frac{1}{ | B_{ \rho }( x_{0} ) | } \int_{ B_{i} \setminus B_{i-1} }  \frac{ | u_{j}(x) - \tilde{u}(x) |^{q} }{ \rho^{q} } \x \right)^{1/q},
\end{eqnarray*}
where we used $ | B_{i} \setminus B_{i-1} | \ge \frac{ n \lambda^{n} ( 1 - \lambda ) }{ \nu } | B_{ \rho }( x_{0} ) | $. Then
\[ \sum_{i=1}^{\nu} | \zeta_{j,i} | 
\le \frac{ \nu^{2} }{ n^{1/q} \lambda^{n/q} ( 1 - \lambda )^{ 1 + 1/q } } 
		\left( \frac{1}{ | B_{ \rho }( x_{0} ) | } \int_{ B_{ \rho }( x_{0} ) }  \frac{ | u_{j}(x) - \tilde{u}(x) |^{q} }{ \rho^{q} } \x \right)^{1/q}. \]
Altogether for the second term
\begin{eqnarray*}
I_{j,i}^{(2)}
& \le & 3 \beta \Bigg[ | B_{i} \setminus B_{i-1} | \big( 1 + | \E_{ \dev }  u( x_{0} ) | + ( \DIV  u( x_{0} ) )^{2} \big) + \\
&     & + \int_{ B_{i} \setminus B_{i-1} } \big( | \E_{ \dev } u_{j}(x) | + ( \DIV u_{j}(x) )^{2} \big) \x  + \\
&     & + \frac{ 2 ( 1 + C ) \nu }{ ( 1 - \lambda )^{ 1/q }  } n^{1/q'} | B_{ \rho }( x_{0} ) |
		\left( \frac{1}{ | B_{ \rho }( x_{0} ) | } 
		\int_{ B_{i} \setminus B_{i-1} }  \frac{ | u_{j}(x) - \tilde{u}(x) |^{q} }{ \rho^{q} } \x \right)^{1/q} + \\
&     & + \frac{ n ( 1 - \lambda ) }{ \nu } | B_{ \rho }( x_{0} ) | \zeta_{j,i}^{2} \Bigg]
\end{eqnarray*}
and
\begin{align*}
&     \sum_{i=1}^{ \nu } I_{j,i}^{(2)} \\
& \le \ 3 \beta \Bigg[ n ( 1 - \lambda ) | B_{ \rho }( x_{0} ) | \big( 1 + | \E_{ \dev } u( x_{0} ) | + | \DIV u( x_{0} ) |^{2} \big) + \\
& \qquad + \int_{ B_{ \rho }( x_{0} ) } \frac{ f \big( \tfrac{x}{ \eps_{j} } , \E u_{j}(x) \big) }{ \alpha } \x + \\
& \qquad + \frac{ 2 ( 1 + C ) \nu }{ ( 1 - \lambda )^{ 1/q }  } ( n \nu )^{1/q'} | B_{ \rho }( x_{0} ) |
		\left( \frac{1}{ | B_{ \rho }( x_{0} ) | } 
		\int_{ B_{ \rho }( x_{0} ) }  \frac{ | u_{j}(x) - \tilde{u}(x) |^{q} }{ \rho^{q} } \x \right)^{1/q} + \\
& \qquad + \frac{ n ( 1 - \lambda ) }{ \nu } | B_{ \rho }( x_{0} ) | 
		\frac{ \nu^{4} }{ n^{2/q} \lambda^{2n/q} ( 1 - \lambda )^{ 2 + 2/q } } 
		\left( \frac{1}{ | B_{ \rho }( x_{0} ) | } \int_{ B_{ \rho }( x_{0} ) }  \frac{ | u_{j}(x) - \tilde{u}(x) |^{q} }{ \rho^{q} } \x \right)^{2/q} \Bigg].
\end{align*}
Now 
\begin{align*}
& \frac{1}{ \nu } \sum_{ i = 1 }^{ \nu } 
		\frac{ I_{j,i}^{(1)} + I_{j,i}^{(2)} + I_{j,i}^{(3)}  }{ | B_{ \rho }( x_{0} ) | } \\
& \le \ \frac{1}{ | B_{ \rho }( x_{0} ) | } \int_{ B_{ \rho }( x_{0} ) } 
		\left( 1 + \tfrac{ 3 \beta }{ \alpha \nu } \right) f \big( \tfrac{x}{ \eps_{j} } , \E u_{j}(x) \big) \x + \\
& \qquad + 4 \beta n ( 1 - \lambda )  \big( 1 + | \E_{ \dev } u( x_{0} ) | + | \DIV u( x_{0} ) |^{2} \big) + \\
& \qquad + \frac{ 2 ( 1 + C ) }{ ( 1 - \lambda )^{ 1/q }  } ( n \nu )^{1/q'} 
		\left( \frac{1}{ | B_{ \rho }( x_{0} ) | } 
		\int_{ B_{ \rho }( x_{0} ) }  \frac{ | u_{j}(x) - \tilde{u}(x) |^{q} }{ \rho^{q} } \x \right)^{1/q} + \\
& \qquad + \frac{ \nu^{2} n^{ 1 - 2/q } }{ \lambda^{2n/q} ( 1 - \lambda )^{ 1 + 2/q } } 
		\left( \frac{1}{ | B_{ \rho }( x_{0} ) | } \int_{ B_{ \rho }( x_{0} ) }  \frac{ | u_{j}(x) - \tilde{u}(x) |^{q} }{ \rho^{q} } \x \right)^{2/q}.
\end{align*}
We now send $ j \to \i $ and use in the last two terms that $ u_{j} \to u $ in $ L^{q}( \Omega ; \R^{n} ) $. Hence,
\begin{eqnarray*}
f_{\hom}( \mathfrak{E} u( x_{0} ) ) 
& \le & \liminf_{ j \to \i } \frac{1}{ \nu } \sum_{ i = 1 }^{ \nu } 
		\frac{ I_{j,i}^{(1)} + I_{j,i}^{(2)} + I_{j,i}^{(3)}  }{ | B_{ \rho }( x_{0} ) | }  \\
& \le & \left( 1 + \tfrac{ 3 \beta }{ \alpha \nu } \right)
		\lim_{ j \to \i } \frac{ \mu_{j}( B_{ \rho }( x_{0} ) ) }{ | B_{ \rho }( x_{0} ) | } + \\
&     & + 4 \beta n ( 1 - \lambda )  \big( 1 + | \E_{ \dev } u( x_{0} ) | + | \DIV u( x_{0} ) |^{2} \big) \\
&     & + \frac{ 2 ( 1 + C ) }{ ( 1 - \lambda )^{ 1/q }  } ( n \nu )^{1/q'} 
		\left( \frac{1}{ | B_{ \rho }( x_{0} ) | } 
		\int_{ B_{ \rho }( x_{0} ) }  \frac{ | u(x) - \tilde{u}(x) |^{q} }{ \rho^{q} } \x \right)^{1/q} + \\
&     & + \frac{ \nu^{2} n^{ 1 - 2/q } }{ \lambda^{2n/q} ( 1 - \lambda )^{ 1 + 2/q } } 
		\left( \frac{1}{ | B_{ \rho }( x_{0} ) | } \int_{ B_{ \rho }( x_{0} ) }  \frac{ | u(x) - \tilde{u}(x) |^{q} }{ \rho^{q} } \x \right)^{2/q}.
\end{eqnarray*} 
Sending also $ \rho \to 0 $ (excluding countably many) and applying $ L^{q} $-differentiability of $u$ yields
\begin{eqnarray*}
f_{\hom}( \mathfrak{E} u( x_{0} ) ) 
& \le & \left( 1 + \tfrac{ 3 \beta }{ \alpha \nu } \right)
		\lim_{ \rho \to 0 } \lim_{ j \to \i } \frac{ \mu_{j}( B_{ \rho }( x_{0} ) ) }{ | B_{ \rho }( x_{0} ) | } + \\
&     & + 4 \beta n ( 1 - \lambda )  \big( 1 + | \E_{ \dev } u( x_{0} ) | + | \DIV u( x_{0} ) |^{2} \big).
\end{eqnarray*} 
Since $ \lambda < 1 $ and $ \nu \in \N $ were arbitrary, we get
\[ f_{\hom}( \mathfrak{E} u( x_{0} ) ) 
\le \lim_{ \rho \to 0 } \lim_{ j \to \i } \frac{ \mu_{j}( B_{ \rho }( x_{0} ) ) }{ | B_{ \rho }( x_{0} ) | }. \qedhere \]
\end{proof}
\begin{remark}
\label{remark:Hencky-LU}
Until now we have not made use of the asymptotic convexity assumption (\ref{eq:asymptotic-convexity}). By \eqref{eq:liminf} and Lemma~\ref{lemma:liminf-regular},
we have for every $ u \in U( \Omega ) $
\[ \Gamma( L^{1} ) \mbox{-} \liminf_{ \eps \to 0 } \F_{ \eps }( u ) \ge \int_{ \Omega } f_{\hom}( \E u(x) ) \x. \]
Together with the proof of the $\limsup$-inequality from Subsection~\ref{section:Hencky-limsup} this yields the assertions in Remark~\ref{remark:Hencky-epilog}.
\end{remark}
\subsubsection{Singular points}
\label{subsection:singular-points}
\begin{trivlist}
\item
In order to control the behaviour in the singular points, 
we will have to assume that $f$ be asymptotically convex, as defined in \eqref{eq:asymptotic-convexity}. We first note that these $ c^{ \eta } $ can be assumed to enjoy the following additional properties. 
\begin{lemma}\label{lemma:c-properties} 
Suppose $f$ is as in Theorem~\ref{theo:Hencky-epilog}.
To every $ \eta > 0 $ there exist $ \beta_{\eta} > 0 $ and a Carath\'{e}odory function 
$ c^{\eta} : \R^{n} \times \M{n}{n}_{\sm} \to \R $ that is $ \Y^{n} $-periodic in the first variable and convex in the second 
such that \eqref{eq:asymptotic-convexity} is satisfied for a.e.~$ x \in \R^{n} $ and all $ X \in \M{n}{n}_{\sm} $. 
Moreover, $ c^{\eta} $ is non-negative with $ c^{\eta}(x,0) = 0 $ and $\dom ( c^{\eta} )^*(x, \cdot) $ is closed for a.e.\ $ x \in \R^{n} $. 
\end{lemma}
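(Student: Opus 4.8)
The plan is to take the $c^{\eta}$ provided by \eqref{eq:asymptotic-convexity} and improve it in two stages: an elementary correction yielding non-negativity and $c^{\eta}(x,0)=0$, and then a more substantial modification, performed on the level of the Legendre conjugate, that forces $\dom(c^{\eta})^{*}(x,\cdot)$ to be closed. It is convenient to note first that it suffices to treat small $\eta$, say $\eta<\alpha$: if $0<\eta_{0}<\eta$ and $c^{\eta_{0}}$ has all the asserted properties for $\eta_{0}$, then a fortiori it satisfies \eqref{eq:asymptotic-convexity} for $\eta$ (with $\beta_{\eta}=\beta_{\eta_{0}}$), so one may carry out the modifications below with a small auxiliary $\eta_{0}<\eta$ and use the slack $\eta-\eta_{0}$ to absorb the errors. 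Combining \eqref{eq:Hencky-growth} and \eqref{eq:asymptotic-convexity} yields, for any admissible $c^{\eta}$ and with $x$-independent constants,
\[
(\alpha-\eta)\big(|X_{\dev}|+(\tr X)^{2}\big)-\beta_{\eta}\ \le\ c^{\eta}(x,X)\ \le\ (\beta+\eta)\big(|X_{\dev}|+(\tr X)^{2}\big)+\beta+\beta_{\eta};
\]
in particular $c^{\eta}(x,\cdot)$ is a finite convex function, Lipschitz in the deviatoric directions with constant $\le\beta+\eta$, and $|c^{\eta}(x,0)|\le\beta+\beta_{\eta}$.

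\emph{Normalisation and positivity.} Replace $c^{\eta}$ first by $c^{\eta}(x,X)-c^{\eta}(x,0)$ and then by $\max\{c^{\eta}(x,X),0\}$. The first change is harmless by the bound on $c^{\eta}(x,0)$; the second preserves being Carath\'eodory, convex in $X$ and $\Y^{n}$-periodic, makes $c^{\eta}$ non-negative, keeps the value $0$ at $X=0$, and does not enlarge the error: since $f\ge0$ by \eqref{eq:Hencky-growth}, $|f-\max\{c^{\eta},0\}|\le|f-c^{\eta}|$ pointwise (trivially where $c^{\eta}\ge0$, and where $c^{\eta}<0$ one has $|f-\max\{c^{\eta},0\}|=f\le f-c^{\eta}=|f-c^{\eta}|$). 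Thus from now on $c^{\eta}\ge0$ and $c^{\eta}(x,0)=0$; consequently $c^{\eta}(x,\cdot)$ attains its minimum at $0$, so $c^{\eta}(x,X)\le (c^{\eta})^{\#}(x,X)$ for all $X$, while $(c^{\eta})^{*}(x,\cdot)\ge0$ with $(c^{\eta})^{*}(x,0)=0$.

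\emph{Closing the conjugate domain.} The growth bounds give $(c^{\eta})^{*}(x,Y)=+\infty$ when $|Y_{\dev}|>\beta+\eta$ and $(c^{\eta})^{*}(x,Y)\ge\frac{(\tr Y)^{2}}{4n^{2}(\beta+\eta)}-C$ otherwise, so $\dom(c^{\eta})^{*}(x,\cdot)$ is a cylinder $\{\,Y:\ Y_{\dev}\in D(x)\,\}$ over a bounded convex set $D(x)\subset\M{n}{n}_{\dev}$ with $0\in D(x)$. Put
\[
m(x,P):=\min_{\tau\in\R}(c^{\eta})^{*}\!\big(x,P+\tfrac{\tau}{n}I\big),\qquad D_{M}(x):=\{P\in\M{n}{n}_{\dev}:m(x,P)\le M\}.
\]
The coercive quadratic lower bound makes the minimisation attained, so $m(x,\cdot)$ is convex and lower semicontinuous with $m(x,0)=0$ and $\{m(x,\cdot)<\infty\}=D(x)$; hence each $D_{M}(x)$ is a \emph{compact} convex set containing $0$, and $D_{M}(x)\uparrow D(x)$ as $M\to\infty$. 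Define
\[
c^{\eta}_{M}(x,X):=\sup\big\{\langle Y,X\rangle-(c^{\eta})^{*}(x,Y)\ :\ Y\in\M{n}{n}_{\sm},\ Y_{\dev}\in D_{M}(x)\big\}.
\]
Writing $c^{\eta}_{M}(x,\cdot)=\big((c^{\eta})^{*}(x,\cdot)+\iota_{\{Y_{\dev}\in D_{M}(x)\}}\big)^{*}$ and noting that the added indicator is that of a \emph{closed} cylinder, the inner function is proper, convex and lower semicontinuous, hence closed, so it equals its own biconjugate and therefore
\[
\dom(c^{\eta}_{M})^{*}(x,\cdot)=\{\,Y:\ Y_{\dev}\in D_{M}(x)\,\},
\]
which is closed. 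Taking $Y=0$ in the supremum gives $c^{\eta}_{M}\ge0$ and $c^{\eta}_{M}(x,0)=0$, and taking the supremum over all $Y$ gives $c^{\eta}_{M}\le(c^{\eta})^{**}=c^{\eta}$, so the properties already secured are retained; that $c^{\eta}_{M}$ is still a Carath\'eodory function (indeed a normal integrand) and $\Y^{n}$-periodic follows from standard measurable-dependence results for conjugation and partial minimisation.

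\emph{The remaining estimate, and the main obstacle.} It remains to fix $M$ so large that $c^{\eta}:=c^{\eta_{0}}_{M}$ still satisfies \eqref{eq:asymptotic-convexity} for the target $\eta$; since $c^{\eta_{0}}_{M}\le c^{\eta_{0}}\le f+\eta_{0}\big(|X_{\dev}|+(\tr X)^{2}\big)+\beta_{\eta_{0}}$ is the upper half, the task is
\[
0\le c^{\eta_{0}}(x,X)-c^{\eta_{0}}_{M}(x,X)\le(\eta-\eta_{0})\big(|X_{\dev}|+(\tr X)^{2}\big)+C\qquad\text{for }M\text{ large.}
\]
The geometric reason this should hold is that $c^{\eta_{0}}_{M}$ and $c^{\eta_{0}}$ share their trace behaviour (only the deviatoric part of the conjugate domain was shrunk) and have almost the same deviatoric recession: $(c^{\eta_{0}}_{M})^{\#}(x,\cdot)$ and $(c^{\eta_{0}})^{\#}(x,\cdot)$ are the support functions of the cylinders over $D_{M}(x)$ and over $D(x)$, hence differ only on deviatoric directions and there only by $\sigma_{D(x)}-\sigma_{D_{M}(x)}$, which decreases to $0$ as $M\to\infty$ — monotonically, uniformly on the deviatoric unit sphere by Dini's theorem, and, since all the bounds above are $x$-independent, uniformly in $x$. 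Turning this recession comparison, together with $c^{\eta_{0}}(x,\cdot)\le(c^{\eta_{0}})^{\#}(x,\cdot)$, $c^{\eta_{0}}_{M}(x,\cdot)\ge0$ and $c^{\eta_{0}}_{M}(x,0)=0$, into the displayed pointwise bound — in particular for $|X|$ large and simultaneously in $x$ — is the technical heart of the argument and the step I expect to require the most care; the earlier steps are essentially bookkeeping once one decides to argue through the conjugate. Once this estimate is available, $c^{\eta}:=c^{\eta_{0}}_{M}$ (with $M$ chosen accordingly and $\beta_{\eta}$ adjusted) is non-negative, vanishes at $0$, has closed conjugate domain for a.e.\ $x$, and satisfies \eqref{eq:asymptotic-convexity}, which is the claim; for $\eta\ge\alpha$ one simply reuses the function constructed for $\eta_{0}=\alpha/2$.
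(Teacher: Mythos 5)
Your normalisation step is fine and essentially matches the paper's (which takes $\max\{\tilde c-\mathrm{const},\,\alpha(|X_{\dev}|+(\tr X)^2)\}$, thereby also securing the lower Hencky bound, but your subtract‑and‑truncate variant works equally well), and the general idea of restricting the conjugate to a closed convex set and conjugating back is also the paper's. The genuine gap is exactly where you flag it, and it is not just a missing computation: with the truncation you chose, the required estimate is \emph{false}. Take $f(x,X)=c^{\eta_0}(x,X)=\max\{\tfrac12|X_{\dev}|,\,|X_{\dev}|-k(x)\}+(\tr X)^2$ with $k$ periodic, measurable and unbounded on a set of positive measure; this satisfies \eqref{eq:Hencky-growth}. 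One computes $m(x,P)=k(x)\,(2|P|-1)^{+}$ for $|P|\le 1$ (and $+\i$ otherwise), hence $D_M(x)=\{|P|\le\tfrac12+\tfrac{M}{2k(x)}\}$, and for $|X_{\dev}|\ge 2k(x)$ one finds $c^{\eta_0}(x,X)-c^{\eta_0}_M(x,X)=\big(\tfrac12-\tfrac{M}{2k(x)}\big)|X_{\dev}|-k(x)+M$. This cannot be dominated by $\eta|X_{\dev}|+\beta_\eta$ with $\eta<\tfrac12$ and $x$‑independent $\beta_\eta$ on the positive‑measure set where $k(x)>M/(1-2\eta)$ — for any $M$. (A small perturbation of the second branch makes $\dom (c^{\eta_0})^*(x,\cdot)$ genuinely non‑closed without changing this conclusion.) The heuristic you offer for the missing step — Dini convergence $\sigma_{D_M(x)}\uparrow\sigma_{D(x)}$ on the unit sphere, ``uniformly in $x$ because the bounds are $x$‑independent'' — is precisely what fails: Dini gives uniformity in the direction for each fixed $x$, but the rate is governed by how fast $(c^{\eta_0})^*(x,\cdot)$ blows up near the boundary of its domain, which the growth constants do not control.

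The repair, and the paper's actual argument, is to shrink the domain \emph{multiplicatively} rather than by sublevel sets: with $K(x)$ the closure of the deviatoric section of $\dom(c^{\eta_0})^*(x,\cdot)$, set $K_\eps(x):=(1-\eps)K(x)$ and $c:=\big((c^{\eta_0})^*+\chi_{K_\eps(\cdot)+\R I}\big)^*$. Then $c^*=(c^{\eta_0})^*+\chi_{K_\eps(x)+\R I}$ has closed domain and $c\le c^{\eta_0}$; and since $c^{\eta_0}\ge0$ one has $(1-\eps)^{-1}c^{\eta_0}\ge c^{\eta_0}$, whence $\big((1-\eps)^{-1}c^{\eta_0}\big)^*(Y)=(1-\eps)^{-1}(c^{\eta_0})^*\big((1-\eps)Y\big)\le (c^{\eta_0})^*(Y)$; because $(1-\eps)Y_{\dev}\notin K_\eps(x)$ forces $Y_{\dev}\notin K(x)$ and so $(c^{\eta_0})^*(x,Y)=+\i$, this upgrades to $\big((1-\eps)^{-1}c\big)^*\le (c^{\eta_0})^*$ and therefore to the two‑sided sandwich $c\le c^{\eta_0}\le(1-\eps)^{-1}c$. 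Combined with the upper Hencky bound this gives $0\le c^{\eta_0}-c\le\tfrac{\eps}{1-\eps}\,\beta'\big(|X_{\dev}|+(\tr X)^2+1\big)$ with $x$‑independent constants, which is exactly the estimate your outline is missing; choosing $\eps$ small then completes the proof. So your plan goes through once $D_M(x)$ is replaced by $(1-\eps)K(x)$, and with that replacement no Dini/support‑function argument is needed at all.
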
 
\item 
Here $ (c^{\eta})^* $ denotes the convex conjugate of $ c^{\eta} $ in the second variable.  
\begin{proof}
Let us fix $ \eta > 0 $. For simplicity reasons we omit writing it as a superscript of the corresponding convex functions. 
By assumption there is a Carath\'{e}odory function $ \tilde{c} : \R^{n} \times \M{n}{n}_{\sm} \to \R $ 
that is $ \Y^{n} $-periodic in the variable and convex in the second verifying \eqref{eq:asymptotic-convexity} for some $ \tilde{ \beta }_{\eta} > 0 $. 
Setting $ \hat{c}(x,X) := \max \{ \tilde{c}(x,X) - \beta - \tilde{ \beta }_{1} , \alpha ( | X_{\dev} | + ( \tr X )^{2} ) \} $, 
we see that $ \hat{c} $ has a Hencky plasticity growth (with the coefficients $ \hat{\alpha} := \alpha $ and $ \hat{\beta} := \beta + 1 + \beta_{1} $) 
and satisfies \eqref{eq:asymptotic-convexity} with $ \beta_{\eta} := \tilde{ \beta }_{\eta} + \beta + \tilde{ \beta }_{1} $. 
Clearly, $ \hat{c} \ge 0 $ and $ \hat{c}(x,0) = 0 $ for every $ x $. 
\item
Elementary arguments show that the Hencky plasticity growth assumptions imply that for every $ x $ there is a closed set $K(x)$ in $ \M{n}{n}_{\dev} $ with 
\[ \{ Y \in \M{n}{n}_{\dev} : | Y | \le \hat{\alpha} \} 
   \subset K(x) 
   \subset \{ Y \in \M{n}{n}_{\dev} : | Y | \le \hat{\beta} \} \] 
such that the domain of the convex conjugate (with respect to the second variable) of $\hat{c}(x,\cdot)$ satisfies 
\[ \Int \dom \hat{c}^{*}(x,\cdot) = {\Int}_{\dev} K(x) + \R I
\quad \mbox{and} \quad
\overline{ \dom \hat{c}^{*}(x,\cdot) } = K(x) + \R I \]
for a.e.\ $x$. (Here $ \Int_{\dev} $ denotes the relative interior in $ \M{n}{n}_{\dev} $. 
\item
Since $ 0 \in \Int_{\dev} K(x) $, for $ \eps < 1 $ the set
\[ K_{ \eps }(x) := ( 1 - \eps ) K(x) \]
is a compact and convex subset of $ K(x) $ with $ 0 \in \Int_{\dev} K_{\eps}(x) $.
We choose a Carath\'{e}odory function $ c : \R^{n} \times \M{n}{n}_{\sm} \to \R $ such that 
\[ c(x,X) := \sup \{ X \cdot Y - \hat{c}^{*}(x,Y) : Y_{ \dev } \in K_{\eps}(x) \} \]
for a.e.\ $ x $ and all $ X $. In other words, $ c(x,X) = ( \hat{c}^{*}(x, \cdot) + \chi_{ K_{\eps}(x) + \R I } )^{*}(X) $ for a.e.\ $ x $ and all $ X $. (Note that since $ K_{\eps}(x) + \R I $ is closed and convex for every $ x $, $\chi_{ K_{\eps}(\cdot) + \R I } $ is a normal integrand.) As furthermore $c(x, \cdot)$ is convex, we have 
\[ c^{*}(x, \cdot) = ( \hat{c}^{*} + \chi_{ K_{\eps} + \R I } )^{**}(x, \cdot) = \hat{c}^{*}(x, \cdot) + \chi_{ K_{\eps}(x) + \R I } \]
for a.e.\ $ x $. From $ c^{*}(x, \cdot) \ge \hat{c}^{*}(x, \cdot) $, it follows
\[ c(x, \cdot) \le \hat{c}(x, \cdot) \]
for a.e.\ $ x $. 

On the other hand, as $ \hat{c} \ge 0 $, we have $ (1 - \eps)^{-1} \hat{c} \ge \hat{c} $, and consequently
\begin{align*}
  \hat{c}^{*}(x,Y) 
  &\ge ( 1 - \eps )^{-1} \hat{c}^{*}(x, (1 - \eps)Y) \\ 
  &= ( 1 - \eps )^{-1} \hat{c}^{*}(x, (1 - \eps)Y) + ( 1 - \eps )^{-1} \chi_{ K_{\eps}(x) }( (1 - \eps) Y_{\dev} ) 
   = \big( ( 1 - \eps )^{-1} c \big)^{*} (x, Y) 
\end{align*}
for a.e.\ $ x $ and all $ Y $, since $(1 - \eps) Y_{\dev} \notin K_{\eps}(x)$ implies $Y_{\dev} \notin K(x)$, i.e., $\hat{c}^{*}(x,Y) = + \infty$. Hence, we also have 
\[ \hat{c}(x, \cdot) \le (1 - \eps)^{-1} c(x, \cdot) \]
for a.e.\ $ x $. 
Summarizing, we have found a Carath\'{e}odory function $ c : \R^{n} \times \M{n}{n}_{\sm} \to \R $ which is $ \Y^{n} $-periodic in the variable and convex in the second such that $ \dom c^{*}(x,\cdot) = K_{\eps}(x) + \R I $ is closed and 
\[ c(x, \cdot) \le \hat{c}(x, \cdot) \le (1 - \eps)^{-1} c(x, \cdot) \]
holds and for a.e.\ $ x $. Taking $\eps$ sufficiently small, this estimate also shows that $c$ has Hencky plasticity growth and satisfies \eqref{eq:asymptotic-convexity} for $ 2 \eta $. 
\end{proof}
\item 
\begin{lemma}\label{lemma:dom-chom} 
For some $ \eta > 0 $, let $ c $ satisfy the assertions of Lemma~\ref{lemma:c-properties}. Then also $ c^*_{\hom} $ has a closed domain. 
\end{lemma}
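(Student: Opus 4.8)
The plan is to combine the elementary convex‑duality counterpart of the cell formula with a weak‑$*$ compactness argument; throughout write $c^{*}_{\hom}:=(c_{\hom})^{*}$ and recall from the proof of Lemma~\ref{lemma:c-properties} that $\dom c^{*}(x,\cdot)=K_{\eps}(x)+\R I$ with $K_{\eps}(x)$ a compact convex subset of $\M{n}{n}_{\dev}$ lying in a fixed ball. Since $c(x,\cdot)$ is convex, $c_{\hom}$ is convex; it is finite and has Hencky growth, hence continuous, so $c^{*}_{\hom}$ is proper, convex, lower semicontinuous, with $c^{*}_{\hom}\ge 0$ and $c^{*}_{\hom}(0)=0$. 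A set is closed iff it contains its (convex) closure, so $\dom c^{*}_{\hom}$ is closed iff $c^{*}_{\hom}$ is finite on $\overline{\dom c^{*}_{\hom}}$. The support function of $\dom c^{*}_{\hom}$ is the recession function $(c_{\hom})^{\#}$ (which here is a limit/supremum), and $(c_{\hom})^{\#}(X)=+\i$ as soon as $\tr X\ne 0$ by the quadratic lower bound in \eqref{eq:Hencky-growth}, while $(c_{\hom})^{\#}$ restricted to $\M{n}{n}_{\dev}$ is finite, positively $1$‑homogeneous, with $\alpha|P|\le(c_{\hom})^{\#}(P)\le\beta|P|$. Hence $\overline{\dom c^{*}_{\hom}}=D+\R I$ where $D\subset\M{n}{n}_{\dev}$ is the compact convex set whose support function is $(c_{\hom})^{\#}|_{\M{n}{n}_{\dev}}$, and it suffices to prove $c^{*}_{\hom}$ finite on $D+\R I$.

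The key tool is the dual homogenization formula
\[
  c^{*}_{\hom}(Y)=\inf\Big\{\int_{\Y^{n}}c^{*}\big(x,Y+\tau(x)\big)\x:\tau\in\mathcal{A}\Big\},\qquad
  \mathcal{A}:=\Big\{\tau\in L^{1}(\Y^{n};\M{n}{n}_{\sm})\ \Y^{n}\text{-periodic}:\DIV\tau=0,\ \int_{\Y^{n}}\tau=0\Big\}.
\]
The inequality ``$\le$'' is immediate: for $\tau\in\mathcal{A}$, any $k\in\N$ and any $\varphi\in C_{c}^{\i}(k\Y^{n};\R^{n})$ one has $\int_{k\Y^{n}}\E\varphi:\tau=-\int_{k\Y^{n}}\varphi\cdot\DIV\tau=0$ and $\int_{k\Y^{n}}\tau=0$, so for every $X\in\M{n}{n}_{\sm}$
\[
  X:Y-\tfrac{1}{k^{n}}\!\int_{k\Y^{n}}\!c(x,X+\E\varphi(x))\x
  =\tfrac{1}{k^{n}}\!\int_{k\Y^{n}}\!\big[(X+\E\varphi(x)):(Y+\tau(x))-c(x,X+\E\varphi(x))\big]\x
  \le\int_{\Y^{n}}c^{*}(x,Y+\tau(x))\x,
\]
and taking the supremum over $X,k,\varphi$ gives ``$\le$''. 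For ``$\ge$'' I pass through the hardening regularization $c^{(\delta)}(x,X):=c(x,X)+\delta|X_{\dev}|^{2}$, which is convex with standard $2$‑growth; the classical dual homogenization formula applies to $c^{(\delta)}$, and $(c^{(\delta)})^{*}(x,\cdot)$ is the infimal convolution of $c^{*}(x,\cdot)$ with $W\mapsto\tfrac{1}{4\delta}|W|^{2}$ (set $+\i$ off $\M{n}{n}_{\dev}$), increasing to $c^{*}(x,\cdot)$ as $\delta\searrow 0$. As $c^{(\delta)}_{\hom}\searrow c_{\hom}$ pointwise (dominated convergence in the cell formula, exactly as in Section~\ref{subsection:Hencky-setting}) and $c_{\hom}$ is convex and lsc, $c^{*}_{\hom}=\sup_{\delta>0}(c^{(\delta)}_{\hom})^{*}$. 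For $Y$ with $c^{*}_{\hom}(Y)<\i$, (near‑)minimizers $\tau_{\delta}\in\mathcal{A}$ for $(c^{(\delta)}_{\hom})^{*}(Y)$ have $\int_{\Y^{n}}\dist\!\big((Y+\tau_{\delta})_{\dev}(x),K_{\eps}(x)\big)^{2}\x\to 0$ and stay bounded in $L^{2}$ (coercivity of $(c^{(\delta)})^{*}$), so a weak limit $\tau_{0}\in\mathcal{A}$ satisfies $(Y+\tau_{0})_{\dev}(x)\in K_{\eps}(x)$ a.e., and a $\Gamma$‑lower‑semicontinuity argument yields $c^{*}_{\hom}(Y)=\lim_{\delta}(c^{(\delta)}_{\hom})^{*}(Y)\ge\int_{\Y^{n}}c^{*}(x,Y+\tau_{0}(x))\x$. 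This proves the formula and, in passing, that the infimum is attained whenever $c^{*}_{\hom}(Y)<\i$.

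Now closedness follows. Let $Y_{j}\to Y$ with $c^{*}_{\hom}(Y_{j})<\i$ and pick minimizers $\tau_{j}\in\mathcal{A}$ with $(Y_{j}+\tau_{j})_{\dev}(x)\in K_{\eps}(x)$ a.e.; replacing $\tr\tau_{j}$ by the $\Y^{n}$‑periodic mean‑zero solution of $\nabla(\tr\tau_{j})=-n\DIV\big(\tau_{j}-\tfrac{\tr\tau_{j}}{n}I\big)$ (admissible, since the divergence of a symmetric field lies, on the range, in $\im\nabla$) leaves $\mathcal{A}$ and the deviatoric part unchanged. Because each $K_{\eps}(x)$ lies in a fixed ball (Hencky \emph{lower} bound on $c$), $\{(\tau_{j})_{\dev}\}$ is bounded in $L^{\i}(\Y^{n})$, whence $\{\tr\tau_{j}\}$ is bounded in $L^{2}(\Y^{n})$ via the estimate $\|\tr\tau_{j}\|_{L^{2}}\le C\|\DIV(\tau_{j})_{\dev}\|_{H^{-1}}\le C\|(\tau_{j})_{\dev}\|_{L^{2}}$. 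Along a subsequence $\tau_{j}\weakly\tau$ in $L^{2}(\Y^{n};\M{n}{n}_{\sm})$ with $\tau\in\mathcal{A}$; the constraint $Y+\tau_{\dev}(x)\in K_{\eps}(x)$ a.e.\ passes to the limit because for every $\varrho>0$ and $j$ large $Y+(\tau_{j})_{\dev}(x)$ lies a.e.\ in the $\varrho$‑neighbourhood of the measurably varying closed convex set $K_{\eps}(x)$, and the corresponding constraint set in $L^{2}$ is convex and strongly—hence weakly—closed. Applying the formula (direction ``$\le$'') to $\tau$ gives $c^{*}_{\hom}(Y)\le\int_{\Y^{n}}c^{*}(x,Y+\tau(x))\x<\i$, so $Y\in\dom c^{*}_{\hom}$. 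The main obstacle is the duality formula in this mixed (linearly growing deviatoric, quadratically growing volumetric) regime—ensuring attainment of the infimum and, relatedly, a bound on $c^{*}(x,\cdot)$ over compact subsets of the relative interior of its fibrewise domain that is uniform in $x$; this is exactly what the quantitative choice $\dom c^{*}(x,\cdot)=K_{\eps}(x)+\R I$ with $K_{\eps}(x)=(1-\eps)K(x)$ compactly contained, built in the proof of Lemma~\ref{lemma:c-properties}, provides.
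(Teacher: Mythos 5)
Your proof is correct, and its core coincides with the paper's: the characterization of $\dom (c_{\hom})^{*}$ through the dual cell formula \eqref{eq:chom-Darst}, the $L^{\i}$ bound on the deviatoric part coming from the uniform boundedness of the fibrewise domains $K_{\eps}(x)$, the $L^{2}$ bound on the trace via $\nabla(\tr\Phi)=-n\DIV\Phi_{\dev}$ in $W^{-1,2}$ combined with a Poincar\'e-type inequality, and the passage of the convex constraint to the weak $L^{2}$ limit via convexity plus strong closedness are exactly the steps of the paper's argument. The one genuinely different ingredient is your derivation of \eqref{eq:chom-Darst} itself: the paper obtains it by Fenchel--Rockafellar duality (following Demengel--Qi and Ekeland--Temam) together with a measurable selection, whereas you prove ``$\le$'' by direct testing with divergence-free mean-zero fields and ``$\ge$'' by a vanishing-hardening limit through $c^{(\delta)}$, using $\sup_{\delta>0}(c^{(\delta)}_{\hom})^{*}=(c_{\hom})^{*}$ and weak lower semicontinuity of the convex dual integrands; this is more self-contained at the price of invoking the classical dual homogenization formula for the $2$-growth regularization, and it additionally yields attainment of the infimum. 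Two cosmetic remarks: the ``replacement'' of $\tr\tau_{j}$ is vacuous, since $\DIV\tau_{j}=0$ and $\int_{\Y^{n}}\tau_{j}=0$ already determine $\tr\tau_{j}$ from $(\tau_{j})_{\dev}$; and the containment of $K_{\eps}(x)$ in a fixed ball follows from the \emph{upper} Hencky bound on $c$ (which bounds $c^{*}$ from below), not the lower one. Finally, both you and the paper leave implicit the same integrability point, namely that $(Y+\tau)_{\dev}\in K_{\eps}(x)$ a.e.\ together with $\tr(Y+\tau)\in L^{2}$ gives $\int_{\Y^{n}}c^{*}(x,Y+\tau(x))\x<\i$; as you note, this is supplied by the quantitative construction in Lemma~\ref{lemma:c-properties}.
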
 
\begin{proof} 
We first note that 
\begin{align}\label{eq:chom-Darst} 
( c_{\hom} )^{*}( Y ) 
= \inf_{ { \Phi \in L^{2}( \Omega ; \M{n}{n}_{\sm} ) \atop \DIV \Phi = 0 } \atop \int_{ \Y^{n} } \Phi(x) dx = 0 } \int_{ \Y^{n} } c^{*}( x , Y + \Phi(x) ) \x, 
\end{align}
cf.\ \cite[Equation~(3.8)]{DemengelQi}. (This formula can be obtained by writing 
\begin{eqnarray*}
( c_{\hom} )^{*}( Y )
&  =  & \sup_{ X \in \M{n}{n}_{\sm} } 
	\left( Y \cdot X - \inf_{ \varphi \in LU_{0}( \Y^{n} ) } \int_{ \Y^{n} } c( x , X + \E \varphi(x) ) \x \right) \\
&  =  & \sup_{ X \in \M{n}{n}_{\sm} } 
	\left( - \inf_{ \varphi \in LU_{0}( \Y^{n} ) } \int_{ \Y^{n} } \Big( c( x , X + \E \varphi(x) ) - Y \cdot ( X + \E \varphi(x) ) \Big) \x \right) \\
&  =  & - \inf_{ u \in LU( \Y^{n} ) } F(u) + G( \Lambda u ), 
\end{eqnarray*}
with 
\[ F(u) := \chi_{C}(u) 
\quad \mbox{and} \quad
G( A ) := \int_{ \Y^{n} } \big( c( x , A(x) ) - Y \cdot A(x) \big) \x, \]
where $C := \{ u \in LU( \Y^{n} ) : u \mbox{ affine on } \partial \Y^{n} \}$ and 
\[ \Lambda u := \E u : LU( \Y^{n} ) \to \{ A \in L^{1}( \Y^{n} ; \M{n}{n}_{\sm} ) : \tr A \in L^{2} \}. \]
By duality, see, e.g., \cite[Remark III.4.2, Theorem III.4.1]{EkelandTemam}, we get 
\begin{eqnarray*} 
( c_{\hom} )^{*}( Y ) 
& = & \inf_{ \Phi \in L^{2}( \Y^{n} ; \M{n}{n}_{\sm}) \atop \Phi_{\dev} \in L^{\i} \}  } \big( F^{*}( \Lambda' \Phi ) + G^{*}( - \Phi ) \big), 
\end{eqnarray*}
where $\Lambda'$ denotes the adjoint of $\Lambda$. 
From this, straightforward calculations lead to \eqref{eq:chom-Darst} with the help of a measurable selection argument 
(see, e.g., \cite[Proposition IV.1.2]{EkelandTemam}) and the observation that $ \dom c^{*}( x , \cdot ) \cap \M{n}{n}_{\dev} $ is bounded uniformly in $ x $.)

As a direct consequence we have $Y \in \dom ( c_{\hom} )^{*}$ if and only if  
\begin{multline*}
\exists \Phi \in L^{2}( \Y^{n} ; \M{n}{n}_{\sm} ) : \DIV \Phi = 0, \ \int_{ \Y^{n} } \Phi(x) \x = 0, \ ( Y + \Phi(x) )_{\dev} \in K(x) \mbox{ for a.e.~} x \in \Y^{n}.  
\end{multline*}
In order to show that $ \dom ( c_{\hom} )^{*} $ is closed, we suppose $ \{ Y^{(j)} \}_{ j \in \N } \subset \dom ( c_{\hom} )^{*} $
and $ Y^{(j)} \to Y $. Denote $ M := \sup_{j} | Y^{(j)} | < \i $.

\item
Let $ \Phi^{(j)} $ be a corresponding function to $ Y^{(j)} $. Then for almost every $ x \in \Y^{n} $
\[ \Phi_{\dev}^{(j)}(x) \in K(x) - Y_{\dev}^{(j)} \subset \{ X \in \M{n}{n}_{\dev} : |X| \le \beta + M \}. \]
Hence, $ \{ \Phi^{(j)}_{\dev} \}_{ j \in \N } $ is a bounded sequence in $ L^{\i}( \Y^{n} ; \M{n}{n}_{\sm} ). $
Moreover, 
\[ 0
= \DIV \Phi^{(j)}
= \DIV( \Phi^{(j)}_{ \dev } + \tfrac{ \tr \Phi^{(j)} }{n} I ) 
= \DIV \Phi^{(j)}_{ \dev } + \DIV ( \tfrac{ \tr \Phi^{(j)} }{n} I )
= \DIV \Phi^{(j)}_{ \dev } + \nabla ( \tfrac{ \tr \Phi^{(j)} }{n} ). \]
Hence, in the sense of distributions in $ W^{-1,2}( \Y^{n} ; \M{n}{n}_{\sm} ) $
\[ \nabla ( \tr \Phi^{(j)} ) = - n \DIV \Phi^{(j)}_{ \dev }, \]
and thus
\[ \| \nabla ( \tr \Phi^{(j)} ) \|_{ W^{-1,2} }
= n \| \DIV \Phi^{(j)}_{ \dev } \|_{ W^{-1,2} }
\le n \| \Phi^{(j)} \|_{ L^{\i} }
\le n ( \beta + M ). \]
Since $ \int_{ \Y^{n} } \tr \Phi^{(j)}(x) \, dx = 0 $, a weak form of the Poincar{\'e} inequality (see, e.g., \cite[p.~175]{Galdi}) leads to 
\[ \| \tr \Phi^{(j)} \|_{ L^{2} } \le C \| \nabla ( \tr \Phi^{(j)} ) \|_{ W^{-1,2} } \le C n ( \beta + M ). \]
Thus we have proved that $ \{ \Phi^{(j)} \}_{ j \in \N } $ is a bounded sequence in $ L^{2}( \Y^{n} ; \M{n}{n}_{\sm} ) $.
Hence, it contains a non-relabeled weakly converging subsequence, say
\[ \Phi^{(j)} \weakly \Phi 
\quad \mbox{in } L^{2}. \]
We wish to prove that $ \Phi $ an appropriate function for $ Y $, i.e. 
\[ \DIV \Phi = 0, \ \int_{ \Y^{n} } \Phi(x) \x = 0
\quad \mbox{and} \quad
( Y + \Phi(x) )_{\dev} \in K(x) \mbox{ for a.e.~} x \in \Y^{n}. \]
The first two properties follow immediately from the weak convergence in $ L^{2}( \Y^{n} ; \M{n}{n}_{\sm} ) $. As for the last,
let us define the set
\[ \Xi := \{ \Psi \in L^{2}( \Y^{n} ; \M{n}{n}_{\sm} ) : \Psi_{\dev}(x) \in K(x) \mbox{ for a.e. } x \in \Y^{n} \}. \]
This set is convex and closed (in the norm topology), and is therefore also weakly closed.
Since 
\[ \Xi \owns Y^{(j)} + \Phi^{(j)} \weakly Y + \Phi 
\quad \mbox{in } L^{2}, \]
also $ Y + \Phi \in \Xi $.
\end{proof}
\item
Let $c$ have all the properties stated in Lemma~\ref{lemma:c-properties}. We introduce 
\[ {\cal C}_{\eps}(u) := \left\{ \begin{array}{ll}
\int_{ \Omega } c \big( \tfrac{x}{ \eps } , \E u(x) \big) \x, & u \in LU( \Omega ), \\
\i, & {\rm else.}
\end{array} \right. \]
In \cite{DemengelQi} the authors introduce for every non-negative function $ \varphi \in C( \overline{ \Omega } ) $ also the functionals
\[ \langle {\cal C}_{\eps}(u) , \varphi \rangle := \left\{ \begin{array}{ll}
\int_{ \Omega } c \big( \tfrac{x}{ \eps } , \E u(x) \big) \ \varphi(x) \x, & u \in LU( \Omega ), \\
\i, & {\rm else.}
\end{array} \right. \]
In Theorem 1.1 they show that for any $ \eps_{j} \searrow 0 $ there exists a subsequence $ \{ j_{k} \}_{k \in \N} $ such that
$ \Gamma( L^{q} ) $-$ \lim_{k \to \i} \langle {\cal C}_{ \eps_{j_{k}} }( u ) , \varphi \rangle $ exists 
for every non-negative continuous $ \varphi : \overline{ \Omega } \to \R $ and every $ u \in U( \Omega ) $. 
($q$ is as before, i.e. $ 1 < q < \frac{n}{n-1} $.)
In Proposition 2.1 it is proved that for $ u \in LU( \Omega ) $ the corresponding $ \Gamma $-limit is given by a density,
which is by Proposition 2.2 location-independent.
\item
If $ \varphi = 1 $, we get the existence of 
$ \Gamma( L^{1} ) $-$ \lim_{k \to \i} {\cal C}_{ \eps_{j_{k}} } $
on the whole $ L^{1}( \Omega ; \R^{n} ) $ with the domain $ U( \Omega ) $.
The passage from the $ \Gamma( L^{q} ) $-limit to the $ \Gamma( L^{1} ) $-limit
follows from the lower bound and the compactness of the embedding $ U( \Omega ) \hookrightarrow L^{q}( \Omega ; \R^{n} ) $. 
By Remark~\ref{remark:Hencky-epilog} the density of the $ \Gamma $-limit must be $ c_{ \hom } $.
\item
Morever, they show that this density determines the $ \Gamma $-limit for every $ u \in U( \Omega ) $ with the formula
\[ \int_{ \Omega } \varphi(x) \ d \big( c_{\hom}( E u ) \big)(x). \]
Under the integral there is a measure $ c_{\hom}( E u ) $ that still needs to be explained.
Before that, let us notice that the expression neither depends 
on $ \{ \eps_{j} \}_{ j \in \N } $ nor on $ \{ j_{k} \}_{ k \in \N } $.
By the Urysohn property it follows 
that actually even 
$ \Gamma( L^{q} ) $-$ \lim_{ \eps \to 0 } \langle {\cal C}_{ \eps }( \prostor ) , \varphi \rangle $ 
and therefore $ \Gamma( L^{1} ) $-$ \lim_{ \eps \to 0 } {\cal C}_{ \eps } $ exist and are given by $ c_{\hom} $.
\item
Now we return to the definition of $ c_{\hom}( E u ) $. 
For convex functions with a possible superlinear growth, this was done in \cite{DemengelTemam}.
However, there are some requirements that have to be met (see Subsection 2.2 therein). 
Right away we see that $ c_{\hom} $ is a non-negative finite convex function with $ c_{\hom}(0) = 0 $.
Since $ c_{\hom} $ has superlinear growth, its asymptotic function $ ( c_{\hom} )^{\#} $ does not coincide with the recession function. 
The latter is in this case for all $ X \in \M{n}{n}_{\sm} $
\[ ( c_{\hom} )^{\i}(X) = \limsup_{t \to \i, \ Y \to X} \frac{ c_{\hom}( t Y ) }{t} = \i. \]
However, by Remark~\ref{remark:c-strict-continuity} we still have 
\[ ( c_{\hom} )^{\#}|_{ \M{n}{n}_{\dev} } = ( c_{\hom}|_{ \M{n}{n}_{\dev} } )^{\i}. \]
This distinction is actually a very important issue in this analysis.
To emphasize the difference, our denotation differs from the one in \cite{DemengelTemam}.
The domain of $ ( c_{\hom} )^{\#} $ is $ \M{n}{n}_{\dev} $, 
and for $ X \in \M{n}{n}_{\dev} $ it holds $ c_{\hom}(X) \le \beta ( |X| + 1 ) $. Finally, by Lemma~\ref{lemma:dom-chom} the domain of $ ( c_{\hom} )^{*} $ is closed. 
\item
Then, according to \cite[Section 2.2]{DemengelTemam}, we may define
\[ c_{\hom}( E u ) := c_{ \hom }( \E u ) \ \L^{n} + ( c_{\hom} )^{\#}( \tfrac{ d E^{s} u }{ d | E^{s} u | } ) \ | E^{s} u |. \]
Therefore, for every non-negative $ \varphi \in C( \overline{ \Omega } ) $ and $ u \in U( \Omega ) $,
\[ \Gamma( L^{q} ) \mbox{-} \lim_{ \eps \to \i } \langle { \cal C }_{\eps}( u ) , \varphi \rangle
= \langle { \cal C }_{\hom} ( u ) , \varphi \rangle 
:= \int_{ \Omega } \varphi(x) \ d c_{\hom}( E u )(x). \]
%
%
%
As before this implies
\[ \Gamma( L^{1} ) \mbox{-} \lim_{ \eps \to \i } { \cal C }_{\eps}
= { \cal C }_{\hom} \]
where 
\[ {\cal C}_{\hom}(u) 
:= \left\{ \begin{array}{ll}
 \int_{ \Omega } c_{\hom}( E u ), 
& u \in U( \Omega ), \\
\i, 
& {\rm else.}
\end{array} \right. 
\]
\item
To simplify the denotation, let us define the asymptotic function also for non-convex functions as
\[ f^{\#}(X) := \limsup_{ t \to \i } \frac{ f(tX) }{t}. \]
As already stated in Remark~\ref{remark:c-strict-continuity}, if $f$ is Lipschitz continuous, it coincides with the recession function. 
\begin{lemma}
\label{lemma:liminf-singular}
Suppose $ f : \R^{n} \times \M{n}{n}_{\sm} \to \R $ is a Carath\'{e}odory function 
that is $ \Y^{n} $-periodic in the first variable and satisfies \eqref{eq:Hencky-growth} and \eqref{eq:asymptotic-convexity}. 
Then 
\[ \mu^{s} \ge ( f_{\hom} )^{\#}( \tfrac{ d E^{s} u }{ d | E^{s} u | } ) | E^{s} u |. \]
\end{lemma}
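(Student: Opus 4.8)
The plan is to exploit the asymptotic convexity \eqref{eq:asymptotic-convexity} together with the homogenization result of Demengel and Qi recalled above, applied to the convex functions $c^{\eta}$ furnished by Lemma~\ref{lemma:c-properties}, and then to send $\eta \to 0$. For a fixed $\eta > 0$ write $ {\cal C}^{(\eta)}_{\eps} $, $ {\cal C}^{(\eta)}_{\hom} $, $ c^{\eta}_{\hom} $ for the objects $ {\cal C}_{\eps} $, $ {\cal C}_{\hom} $, $ c_{\hom} $ built from $ c = c^{\eta} $. By Lemma~\ref{lemma:dom-chom} and the discussion above, for every non-negative $ \varphi \in C( \overline{ \Omega } ) $ and every $ v \in U( \Omega ) $ the $ \Gamma( L^{q} ) $-limit $ \Gamma( L^{q} ) \mbox{-} \lim_{ \eps \to 0 } \langle {\cal C}^{(\eta)}_{\eps}( v ) , \varphi \rangle = \int_{ \Omega } \varphi \, d\big( c^{\eta}_{\hom}( E v ) \big) $ exists, where $ c^{\eta}_{\hom}( E v ) = c^{\eta}_{\hom}( \E v ) \, \L^{n} + ( c^{\eta}_{\hom} )^{\#}\big( \tfrac{ d E^{s} v }{ d | E^{s} v | } \big) \, | E^{s} v | $ in the sense of Demengel and Temam.

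First I would derive an estimate of measures. Since $ f \ge \alpha( | X_{\dev} | + ( \tr X )^{2} ) \ge 0 $ by \eqref{eq:Hencky-growth}, \eqref{eq:asymptotic-convexity} yields $ ( 1 + \tfrac{ \eta }{ \alpha } ) f( x , X ) \ge c^{\eta}( x , X ) - \beta_{\eta} $, hence $ f( x , X ) \ge \tfrac{ \alpha }{ \alpha + \eta } c^{\eta}( x , X ) - \beta_{\eta} $ for a.e.\ $x$ and all $X$. Inserting this into $ \mu_{j} = f( \tfrac{ \cdot }{ \eps_{j} } , \E u_{j} ) \, \L^{n} $, integrating against a non-negative $ \varphi \in C( \overline{ \Omega } ) $, and passing to $ \liminf_{ j \to \i } $ (using $ \mu_{j} \weakstar \mu $, $ u_{j} \to u $ in $ L^{q} $ and the $ \Gamma( L^{q} ) $-$ \liminf $-inequality for $ {\cal C}^{(\eta)}_{\eps_{j}} $, noting $ \langle {\cal C}^{(\eta)}_{\eps_{j}}( u_{j} ) , \varphi \rangle = \int_{ \Omega } \varphi \, c^{\eta}( \tfrac{ \cdot }{ \eps_{j} } , \E u_{j} ) $ since $ u_{j} \in LU( \Omega ) $) gives $ \int_{ \Omega } \varphi \, d\mu \ge \tfrac{ \alpha }{ \alpha + \eta } \int_{ \Omega } \varphi \, d\big( c^{\eta}_{\hom}( E u ) \big) - \beta_{\eta} \int_{ \Omega } \varphi \, dx $. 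As $ \varphi $ was arbitrary, $ \mu \ge \tfrac{ \alpha }{ \alpha + \eta } c^{\eta}_{\hom}( E u ) - \beta_{\eta} \L^{n} $ as measures. Both sides are finite and the first two are non-negative; since $ \DIV u \in L^{2} $ forces $ E^{s} u = E^{s}_{\dev} u $, and by Theorem~\ref{theo:BD-Alberti} $ \tfrac{ d E^{s} u }{ d | E^{s} u | } $ is a deviatoric symmetric rank-one matrix $ | E^{s} u | $-a.e., comparing singular parts with respect to $ \L^{n} $ (the term $ \beta_{\eta} \L^{n} $ dropping out) yields $ \mu^{s} \ge \tfrac{ \alpha }{ \alpha + \eta } ( c^{\eta}_{\hom} )^{\#}\big( \tfrac{ d E^{s} u }{ d | E^{s} u | } \big) \, | E^{s} u | $.

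It remains to compare $ ( c^{\eta}_{\hom} )^{\#} $ with $ ( f_{\hom} )^{\#} $ on deviatoric matrices. From \eqref{eq:asymptotic-convexity}, $ f( x , X ) \le c^{\eta}( x , X ) + \eta( | X_{\dev} | + ( \tr X )^{2} ) + \beta_{\eta} $. Fix $ X \in \M{n}{n}_{\sm} $ and $ \theta > 0 $ and choose $ k \in \N $ and $ \varphi \in C_{c}^{\i}( k \Y^{n} ; \R^{n} ) $ with $ \tfrac{1}{ k^{n} } \int_{ k \Y^{n} } c^{\eta}( x , X + \E \varphi ) \, dx \le c^{\eta}_{\hom}( X ) + \theta $. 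Using $ \int_{ k \Y^{n} } \DIV \varphi = 0 $ and the lower Hencky bound for $ c^{\eta} $ (with constant independent of $ \eta $ by Lemma~\ref{lemma:c-properties}), one bounds $ \tfrac{1}{ k^{n} } \int | X_{\dev} + \E_{\dev} \varphi | \le \tfrac{1}{\alpha}( c^{\eta}_{\hom}( X ) + \theta ) $ and $ \tfrac{1}{ k^{n} } \int ( \tr X + \DIV \varphi )^{2} \le ( \tr X )^{2} + \tfrac{1}{\alpha}( c^{\eta}_{\hom}( X ) + \theta ) $; inserting into the upper bound for $ f $ and letting $ \theta \to 0 $ gives $ f_{\hom}( X ) \le ( 1 + \tfrac{ 2 \eta }{ \alpha } ) c^{\eta}_{\hom}( X ) + \eta( \tr X )^{2} + \beta_{\eta} $. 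Evaluating at $ X = t P $ with $ P $ deviatoric, dividing by $ t $ and letting $ t \to \i $ (the limits defining $ ( f_{\hom} )^{\#}( P ) $ and $ ( c^{\eta}_{\hom} )^{\#}( P ) $ existing by symmetric-quasiconvexity of $ f_{\hom} $, cf.\ Proposition~\ref{prop:hom-density-U}, resp.\ convexity of $ c^{\eta}_{\hom} $, and $ ( f_{\hom} )^{\#}( P ) \le \beta | P | $ by the upper Hencky bound) yields $ ( c^{\eta}_{\hom} )^{\#}( P ) \ge ( f_{\hom} )^{\#}( P ) - \tfrac{ 2 \beta }{ \alpha } \eta | P | $. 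Combining this with the measure estimate and using $ \big| \tfrac{ d E^{s} u }{ d | E^{s} u | } \big| = 1 $ $ | E^{s} u | $-a.e., we obtain $ \mu^{s} \ge \tfrac{ \alpha }{ \alpha + \eta } \big( ( f_{\hom} )^{\#}\big( \tfrac{ d E^{s} u }{ d | E^{s} u | } \big) - \tfrac{ 2 \beta }{ \alpha } \eta \big) | E^{s} u | $ for every $ \eta > 0 $; letting $ \eta \to 0 $ (dominated convergence for the uniformly bounded densities) gives the claim.

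The main obstacle is the density comparison in the third step: one must control, uniformly in $ \eta $, the $ \eta $-dependent error incurred when passing from the pointwise comparison of $ f $ and $ c^{\eta} $ to a comparison of their homogenized recession functions. This rests on the fact that the $ c^{\eta} $ can be chosen with Hencky growth constants independent of $ \eta $ (Lemma~\ref{lemma:c-properties}), so that competitors nearly optimal for $ c^{\eta}_{\hom}( X ) $ automatically have controlled $ L^{1} $-norm of $ \E_{\dev} \varphi $ and $ L^{2} $-norm of $ \DIV \varphi $, and on the observation that $ \beta_{\eta} $, although it diverges as $ \eta \to 0 $, only contributes an absolutely continuous term that is annihilated both by taking singular parts and by the degree-one homogeneity in the recession direction.
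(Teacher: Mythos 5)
Your proposal is correct and follows essentially the same route as the paper: the Demengel--Qi homogenization result applied to the convex comparison functions $c^{\eta}$ of Lemma~\ref{lemma:c-properties}, a comparison of singular parts with respect to $\L^{n}$, and the limit $\eta \to 0$. The only (minor) technical differences are that your multiplicative absorption $f \ge \tfrac{\alpha}{\alpha+\eta}\, c^{\eta} - \beta_{\eta}$ avoids the paper's auxiliary weak-$*$ limit $\sigma$ of $( | \E_{\dev} u_{j} | + ( \DIV u_{j} )^{2} )\, \L^{n}$, and that for the recession-function comparison the paper simply passes the two-sided pointwise bound $\big(1 - \tfrac{\eta}{\alpha}\big) f - \beta_{\eta} \le c^{\eta} \le \big(1 + \tfrac{\eta}{\alpha}\big) f + \beta_{\eta}$ through the homogenization infimum, which is shorter than your near-minimizer estimate but yields the same $O(\eta)\,|P|$ error on deviatoric matrices.
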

\begin{proof}
Take any non-negative $ \varphi \in C_{0}( \Omega ) $ and, for given $\eta > 0$, 
let $ c^{\eta} $ be as provided by Lemma~\ref{lemma:c-properties}. 
We may add to our assumptions on $ \{ u_{j} \}_{ j \in \N } $ from the beginning of Subsection~\ref{section:Hencky-liminf} that $ ( | \E_{\dev} u_{j} | + ( \DIV u_{j} )^{2} ) \L^{n} \weakstar \sigma $ in $ M( \Omega ) $.
Because of the weak-$*$ convergence and from 
\[ f(x,X) \ge c^{ \eta }(x,X) - \eta ( | X_{\dev} | + ( \tr X )^{2} ) - \beta_{\eta}, \] 
it follows (denoting the functionals corresponding to $c^{ \eta } $ and $c^{ \eta }_{\hom} $ by ${ \cal C }^{ \eta }_{ \eps } $, respectively, $ { \cal C }^{ \eta }_{ \hom } $)
\begin{align*}
& \int_{ \Omega } \varphi(x) \ d \mu(x) \\
&  =  \lim_{ j \to \i } \int_{ \Omega } \varphi(x) \ d \mu_{j}(x) \\
& \ge \liminf_{ j \to \i } \langle { \cal C }^{ \eta }_{ \eps_{j} }( u_{j} ) , \varphi \rangle 
	- \lim_{j \to \i} \eta \int_{ \Omega } ( | \E_{\dev} u_{j}(x) | + ( \DIV u_{j} )^{2} ) \varphi(x) \x - \beta_{\eta} \int_{ \Omega } \varphi(x) \x \\
& \ge \langle { \cal C }^{ \eta }_{ \hom }( u ) , \varphi \rangle - \eta \, \langle \sigma , \varphi \rangle - \beta_{\eta} \int_{ \Omega } \varphi(x) \x.
\end{align*}
Hence,
$ \mu \ge c^{ \eta }_{\hom}( E u ) - \eta \, \sigma - \beta_{\eta} \L^{n} $. The inequality holds also for the corresponding singular part, i.e.,
\[ \mu^{s} \ge ( c^{ \eta }_{\hom} )^{\#}( \tfrac{ d E^{s} u }{ d | E^{s} u | } ) | E^{s} u | - \eta \, \sigma^{s}. \]
From
\[ | X_{\dev} | + ( \tr X )^{2} \le \frac{ f(x,X) }{ \alpha }, \]
it follows
\[ \Big( 1 - \frac{ \eta }{ \alpha } \Big) \ f(x,X) - \beta_{\eta} 
\le c^{ \eta }(x,X)
\le \Big( 1 + \frac{ \eta }{ \alpha } \Big) \ f(x,X) + \beta_{\eta}. \]
Hence,
\[ \Big( 1 - \frac{ \eta }{ \alpha } \Big) \ f_{\hom}(X) - \beta_{\eta} 
\le c^{ \eta }_{\hom}(X)
\le \Big( 1 + \frac{ \eta }{ \alpha } \Big) \ f_{\hom}(X) + \beta_{\eta}, \]
and thus 
\[ \Big( 1 - \frac{ \eta }{ \alpha } \Big) ( f_{\hom} )^{\#}(X)
\le ( c^{ \eta }_{\hom} )^{\#}(X) 
\le \Big( 1 + \frac{ \eta }{ \alpha } \Big) ( f_{\hom} )^{\#}(X). \]
This holds for every $ \eta > 0 $, so 
\[ ( f_{\hom} )^{\#}(X) = \lim_{ \eta \to 0 }( c^{ \eta }_{\hom} )^{\#}(X), \]
and, since $ E^{s} u $ is supported on deviatoric matrices, by dominated convergence 
\[ \mu^{s} \ge ( f_{\hom} )^{\#} \big( \tfrac{ d E^{s} u }{ d | E^{s} u | } \big) | E^{s} u |. \qedhere \] 
\end{proof}
\end{trivlist}

\subsection{Relaxation at zero hardening revisited}
\label{section:Hencky-relax-KK}
\begin{trivlist}
\item
The considerations of the previous subsections may be applied to the functionals with location-independent densities.
Clearly, thus we are investigating the relaxation of the homogeneous setting with or without hardening.
\item
We would like to reconsider the assumption of asymptotic convexity for this case
taking into account the recent progress in this field, i.e.~the results in \cite{KirchheimKristensen}.
For the sake of simplicity, we start with $f$ that is already symmetric-quasiconvex.
\item
Suppose that $f$ for some $ 0 \le \gamma < 1 $ fulfils
\[ f(X) \ge f^{\#}( X_{ \dev } ) - \beta ( | X_{ \dev } |^{\gamma} + 1 ) \]
for all $ X \in \M{n}{n}_{\sm} $. Here we actually have two assumptions in mind. 
\item
First, we make the projection
\[ f(X) \ge f( X_{ \dev } ) - C_{1} ( | X_{ \dev } |^{\gamma} + 1 ). \]
Recalling the quadratic growth in the trace direction, it is not a very strong assumption. 
However, it cannot be excluded just by using the lower bound and Lemma~\ref{lemma:Lipschitz-Hencky}.
Currently, it is not known to us whether alone the symmetric-quasiconvexity with a Hencky plasticity growth condition suffices for this estimate.
\item
Additionally, we suppose for $ P \in \M{n}{n}_{\dev} $ also
\[ f(P) \ge f^{\#}(P) - C_{2}( | P |^{\gamma} + 1 ). \]
Such a behaviour is in accordance with similar assumptions in the literature (e.g., \cite{BarrosoFonsecaToader,BouchitteFonsecaMascarenhas}). 
\item
Thus, we imposed a lower bound on the function $f$,
however, not by a convex function as before, but by a 1-homogeneous symmetric-rank-one-convex function (on $ \M{n}{n}_{\dev} $).
Let us be more precise:
\begin{itemize}
\item
$ V := \M{n}{n}_{\dev} $ is a finite-dimensional normed space,
\item 
$ D := \{ a \odot b : a, b \in \R^{n}, \ a \perp b \} $ spans $V$,
\item
$ g := f^{\#}|_{ \M{n}{n}_{\dev} } : V \to \R $ is 1-homogeneous and is convex along any direction from $D$.
\end{itemize}
According to \cite[Theorem 1.1]{KirchheimKristensen}, $g$ is convex at every point from $D$.
To be more specific, for every pair of orthogonal vectors $ a, b \in \R^{n} $, there exists a (homogeneous) linear function $ \ell : V \to \R $
such that
\[ g \ge \ell \mbox{ everywhere on $V$}
\quad \mbox{and} \quad
g( a \odot b ) = \ell( a \odot b ). \] 
With this tools we may prove an alternative relaxation result. 
\begin{prop}
\label{lemma:Hencky-relax-2}
Let us have a symmetric-quasiconvex function $ f : \M{n}{n}_{\sm} \to \R $ with Hencky plasticity growth \eqref{eq:Hencky-growth} for which
there exists $ \gamma \in [0,1) $ such that
\[ f(X) \ge f^{\#}( X_{ \dev } ) - \beta ( | X_{ \dev } |^{\gamma} + 1 ) \]
for all $ X \in \M{n}{n}_{\sm} $.
Then the lower semicontinuous envelope of the functional 
\[ \F(u) := \left\{ \begin{array}{ll}
\int_{ \Omega } f \big( \E u(x) \big) \x, & u \in LU( \Omega ; \R^{n} ), \\
\i, & {\rm else,}
\end{array} \right. \]
is 
\[ \lsc \F(u)
= \left\{ \begin{array}{ll}
\int_{ \Omega } f \big( \E u(x) \big) \x 
+ \int_{ \Omega } f^{\#} \big( \tfrac{ d E^{s} u }{ d | E^{s} u | }(x) \big) \ d| E^{s} u |(x), 
& u \in U( \Omega ; \R^{n} ), \\
\i, & {\rm else.}
\end{array} \right. \]
\end{prop}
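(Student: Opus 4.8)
The plan is to prove the two inequalities $\lsc \F \le \F_{\hom}^{\mathrm{relax}}$ and $\lsc \F \ge \F_{\hom}^{\mathrm{relax}}$ separately (writing $\F_{\hom}^{\mathrm{relax}}$ for the functional on the right), essentially following the proof of Theorem~\ref{theo:Hencky-epilog} and replacing only the asymptotic-convexity argument at singular points by one based on the convexity result \cite[Theorem~1.1]{KirchheimKristensen} recalled above. First observe that, $f$ being independent of $x$ and symmetric-quasiconvex, Proposition~\ref{prop:hom-density-U} gives $f_{\hom}=f$, so $\F_{\hom}^{\mathrm{relax}}$ is exactly the functional $\F_{\hom}$ of Theorem~\ref{theo:Hencky-epilog} with $f^{\#}$ in place of $(f_{\hom})^{\#}$ (they agree on $\M{n}{n}_{\dev}$ by Remark~\ref{remark:c-strict-continuity}). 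The inequality $\lsc \F \le \F_{\hom}^{\mathrm{relax}}$ is then obtained verbatim as in the proof of Theorem~\ref{theo:Hencky-epilog}, part~1: $f$ is continuous, symmetric-rank-one-convex, has Hencky growth, and $(f_{\dev})^{\i}$ is trivially continuous in the absent space variable, so Proposition~\ref{prop:c-strict-continuity} shows the bulk-plus-$f^{\#}$-recession functional is $\langle\cdot\rangle$-strictly continuous on $U(\Omega)$, and Theorem~\ref{theo:c-strict-density} (with $c=\langle\cdot\rangle$) together with Remark~\ref{remark:oglatoklepaj} supplies the recovery sequences. This part uses neither the extra lower bound on $f$ nor $\gamma<1$.

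For the lower bound I would argue as at the start of Subsection~\ref{section:Hencky-liminf}: given $u_j\to u$ in $L^1$ with $\liminf_j\F(u_j)<\i$, pass to a subsequence along which $\F(u_j)$ converges and is finite, $u_j\weakly u$ in $U(\Omega)$ (hence $u\in U(\Omega)$ and $u_j\to u$ in $L^q$ for $1<q<\tfrac{n}{n-1}$) and $f(\E u_j)\,\L^n\weakstar\mu=g\,\L^n+\mu^s$. At regular points Lemma~\ref{lemma:liminf-regular}, which does not use asymptotic convexity, gives $g(x_0)\ge f_{\hom}(\E u(x_0))=f(\E u(x_0))$ for a.e.\ $x_0$. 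By \eqref{eq:liminf} it then remains to establish $\mu^s\ge f^{\#}\big(\tfrac{dE^s u}{d|E^s u|}\big)\,|E^s u|$; this is the only step where the hypotheses on $f$ enter, and it is the heart of the argument.

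To prove it, set $g:=f^{\#}|_{\M{n}{n}_{\dev}}$; the Hencky growth of $f$ yields $\alpha|Y|\le g(Y)\le\beta|Y|$ for $Y\in\M{n}{n}_{\dev}$, so its convex envelope $h$ is a finite, non-negative, positively $1$-homogeneous convex function with the same bounds. By \cite[Theorem~1.1]{KirchheimKristensen}, for each $P_0=a\odot b$ with $a\perp b$ there is a linear $\ell$ with $\ell\le g$ on $\M{n}{n}_{\dev}$ and $\ell(P_0)=g(P_0)$; since $\ell\le h\le g$ this forces $h(P_0)=g(P_0)=f^{\#}(P_0)$ at every such $P_0$. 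Define the convex, $1$-homogeneous, linearly bounded integrand $c(X):=h(X_{\dev})$ on $\M{n}{n}_{\sm}$; the hypothesis on $f$ gives $f(X)\ge f^{\#}(X_{\dev})-\beta(|X_{\dev}|^{\gamma}+1)\ge c(X)-\beta(|X_{\dev}|^{\gamma}+1)$. After a further extraction assume $(|\E_{\dev}u_j|^{\gamma}+1)\,\L^n\weakstar\nu$; since $\|\E_{\dev}u_j\|_{L^1}$ is bounded (lower Hencky bound and $\sup_j\F(u_j)<\i$) and $\gamma<1$, Hölder's inequality bounds $\int_A(|\E_{\dev}u_j|^{\gamma}+1)\x$ by $|A|+|A|^{1-\gamma}(\sup_j\|\E_{\dev}u_j\|_{L^1})^{\gamma}$ uniformly in $j$, so this family is equiintegrable and $\nu$ is absolutely continuous — this is exactly where $\gamma<1$ is needed.

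Testing the pointwise inequality against an arbitrary $\varphi\in C_c(\Omega)$ with $\varphi\ge0$, applying the Reshetnyak lower-semicontinuity theorem for convex functionals of measures (cf.\ \cite[Section~II.4]{Temam}) to the integrand $\varphi(x)c(\cdot)$, whose recession function is itself by $1$-homogeneity, with $E u_j=\E u_j\,\L^n\weakstar E u$, and letting $j\to\i$, I obtain $\mu\ge c(\E u)\,\L^n+c\big(\tfrac{dE^s u}{d|E^s u|}\big)\,|E^s u|-\beta\nu$ as measures; taking singular parts and using that $\nu$ is absolutely continuous gives $\mu^s\ge c\big(\tfrac{dE^s u}{d|E^s u|}\big)\,|E^s u|$. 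Finally, by Theorem~\ref{theo:BD-Alberti} and $E^s u=E^s_{\dev}u$ (which holds since $\DIV u\in L^2$), the derivative $\tfrac{dE^s u}{d|E^s u|}(x)$ is $|E^s u|$-a.e.\ a deviatoric symmetric rank-one matrix $a(x)\odot b(x)$ with $a(x)\perp b(x)$, where $c=h=f^{\#}$; hence $\mu^s\ge f^{\#}\big(\tfrac{dE^s u}{d|E^s u|}\big)\,|E^s u|$. Combined with the regular-point estimate and \eqref{eq:liminf} this gives $\lsc\F\ge\F_{\hom}^{\mathrm{relax}}$, and together with the recovery sequences the proof is complete. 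The genuinely new ingredient, and hence the main obstacle, is this singular-point estimate; its two pillars are the convexity-at-rank-one-points property of $f^{\#}$ from \cite{KirchheimKristensen} (which plays the role of the convex minorants $c^{\eta}$ in the asymptotically convex case) and the observation that the sublinear remainder in the assumed lower bound generates only an absolutely continuous defect measure, so it does not affect the singular part.
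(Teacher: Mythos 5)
Your proof is correct, and for the only step where the hypotheses on $f$ really enter -- the lower bound for $\mu^{s}$ -- you take a mildly but genuinely different route from the paper. Both arguments rest on the same two pillars you identify: the supporting linear functions for $f^{\#}|_{\M{n}{n}_{\dev}}$ at orthogonal rank-one points from \cite[Theorem~1.1]{KirchheimKristensen}, and the fact that the sublinear remainder $\beta(|X_{\dev}|^{\gamma}+1)$ with $\gamma<1$ only produces an equiintegrable, hence absolutely continuous, defect. The difference is in how the singular lower bound is extracted. The paper works pointwise: at $|E^{s}u|$-a.e.\ $x_{0}$ it fixes the supporting linear function $\ell$ at $a(x_{0})\odot b(x_{0})$, estimates $\mu(B_{\rho}) \ge \ell\big(\dev Eu(B_{\rho})\big) - \beta\int_{B_{\rho}} h$, and then divides by $|Eu|(B_{\rho})$ and blows up via the Besicovitch derivation theorem, using mutual singularity of $h\,\L^{n}$ and $(\DIV u)\,\L^{n}$ with $|E^{s}u|$ to kill the error terms. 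You instead globalize: you pass to the convex envelope $h$ of $f^{\#}|_{\M{n}{n}_{\dev}}$, observe (via the same supporting hyperplanes) that $h$ coincides with $f^{\#}$ at every matrix $a\odot b$ with $a\perp b$, and apply Reshetnyak-type lower semicontinuity for the convex, $1$-homogeneous integrand $c(X)=h(X_{\dev})$ once, reading off the singular parts of the resulting measure inequality. Your version is shorter and avoids the ball-by-ball bookkeeping (in particular the need to evaluate weak-$*$ limits on balls with negligible boundary mass); the paper's version avoids invoking the Reshetnyak machinery and the auxiliary facts that the convex envelope of a positively $1$-homogeneous, linearly pinched function is again finite, continuous and $1$-homogeneous -- facts you use implicitly and which do hold, but which you should state if you write this up. Your observations that $f_{\hom}=f$ by symmetric-quasiconvexity and that the recovery-sequence part carries over verbatim from Proposition~\ref{prop:c-strict-continuity} match the paper's reduction via Remark~\ref{remark:Hencky-epilog} and Lemma~\ref{lemma:liminf-regular}.
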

\begin{proof}
By Remark~\ref{remark:Hencky-epilog} and Lemma~\ref{lemma:liminf-regular}, 
we just have to prove the $ \liminf $-inequality in the singular points.
\item
Let us take any $ u \in U( \Omega ; \R^{n} ) $ and let $ u_{j} \to u $ in $ L^{1}( \Omega ; \R^{n} ) $.
Clearly, we may for $ \{ u_{j} \}_{ j \in \N} $ consider only bounded sequences in $ LU( \Omega ; \R^{n} ) $.
Moreover, we may suppose
\begin{itemize}
\item
$ u_{j} \weakly u $ in $ U( \Omega ; \R^{n} ) $,
\item
$ \mu_{j} := f( \E u_{j} ) \L^{n} $ converge weakly-$*$ to some $ \mu $ in $ M( \Omega ; \R^{n} ) $,
\item
$ | \E_{\dev} u_{j} |^{\gamma} + 1 $ converge weakly to some $h$ in $ L^{1/\gamma}( \Omega ; \R^{n} ) $.
\end{itemize}
Our goal is to show 
$ \mu^{s} \ge f^{\#}( \frac{ d E^{s} u }{ d | E^{s} u | } ) | E^{s} u | $ with
$ \mu = g \, \L^{n} + \mu^{s} $ again being the Lebesgue decomposition of $ \mu $ with respect to $ \L^{n} $.
\item
According to Theorem~\ref{theo:BD-Alberti}, 
for $ | E^{s} u | $-a.e.~$ x_{0} \in \Omega $, there exist $ a( x_{0} ) , b( x_{0} ) \in \R^{n} $ such that
\begin{equation}
\label{eq:Hencky-revisited-1}
\frac{ d E^{s} u }{ d | E^{s} u | }( x_{0} ) 
= \lim_{ \rho \to 0 } \frac{ Eu( B_{\rho}( x_{0} ) ) }{ |Eu|( B_{\rho}( x_{0} ) ) }
= a( x_{0} ) \odot b( x_{0} ). 
\end{equation}
Since $ \tr E^{s} u = 0 $, we have $ \tr a( x_{0} ) \odot b( x_{0} ) = a( x_{0} ) \cdot b( x_{0} ) = 0 $.
By the Besicovitch derivation theorem~\ref{theo:Besicovitch},
for $ | E^{s} u | $-a.e.~$ x_{0} \in \Omega $ also,
\begin{equation}
\label{eq:Hencky-revisited-2}
\frac{ d \mu^{s} }{ d | E^{s} u | }( x_{0} ) 
= \lim_{ \rho \to 0 } \frac{ \mu( B_{\rho}( x_{0} ) ) }{ | E u |( B_{\rho}( x_{0} ) ) }.
\end{equation}
Since $ ( \DIV u ) \ \L^{n} $ and $ h \, \L^{n} $ are each mutually singular with $ | E^{s} u | $,
for $ | E^{s} u | $-a.e.~$ x_{0} \in \Omega $ also
\begin{equation}
\label{eq:Hencky-revisited-3}
\lim_{ \rho \to 0 } \frac{ \int_{ B_{\rho}( x_{0} ) } \DIV u(x) \x }{ | E u |( B_{\rho}( x_{0} ) ) } 
= \lim_{ \rho \to 0 } \frac{ \int_{ B_{\rho}( x_{0} ) } h(x) \x }{ | E u |( B_{\rho}( x_{0} ) ) } 
= 0. 
\end{equation} 
\item
Let $ x_{0} \in \Omega $ be from now on any point 
where \eqref{eq:Hencky-revisited-1}, \eqref{eq:Hencky-revisited-2} and \eqref{eq:Hencky-revisited-3} hold. 
Being fixed, we stop writing $ x_{0} $ in the denotations.
We need to show that
\[ \lim_{ \rho \to 0 } \frac{ \mu( B_{\rho} ) }{ |Eu|( B_{\rho} ) } \ge f^{\#}( a \odot b ). \]
Let $ \ell : \M{n}{n}_{\dev} \to \R $ be a linear function from \cite[Theorem~1.1]{KirchheimKristensen}
that determines the supporting hyperplane for $ f^{\#}|_{ \M{n}{n}_{\dev} } $ at $ a \odot b $. 
For all but countable many $ \rho > 0 $ it holds 
\begin{eqnarray*}
\mu( B_{\rho} )
&  =  & \lim_{ j \to \i } \mu_{j}( B_{\rho} ) \\
&  =  & \lim_{ j \to \i } \int_{ B_{\rho} } f( \E u_{j}(x) ) \x  \\
& \ge & \limsup_{ j \to \i } \int_{ B_{\rho} } f^{\#}( \E_{\dev} u_{j}(x) ) \x 
	- \beta \lim_{ j \to \i } \int_{ B_{\rho} } \big( | \E_{\dev} u_{j}(x) |^{\gamma} + 1 \big) \x  \\
& \ge & \limsup_{ j \to \i } \int_{ B_{\rho} } \ell( \E_{\dev} u_{j}(x) ) \x 
	- \beta \int_{ B_{\rho} } h(x) \x \\
&  =  & \ell \big( \dev E u( B_{\rho} ) \big)
	- \beta \int_{ B_{\rho} } h(x) \x. 
\end{eqnarray*}
Hence, by \eqref{eq:Hencky-revisited-3} and \eqref{eq:Hencky-revisited-1}
\[ \lim_{ \rho \to 0 } \frac{ \mu( B_{\rho} ) }{ | E u |( B_{\rho} ) } 
\ge \limsup_{ \rho \to 0 } \ell \left( \frac{ \dev E u( B_{\rho} ) }{ | E u |( B_{\rho} ) } \right) 
= \ell \left( \lim_{ \rho \to 0 } \frac{ E u( B_{\rho} ) }{ | E u |( B_{\rho} ) } \right) 
= \ell( a \odot b ). \]
Now, we employ
\[ \ell( a \odot b )
= f^{\#}( a \odot b ). \qedhere \]
\end{proof}
\end{trivlist}
\begin{appendix}
\section{Appendix: Miscellaneous auxiliary results}
\begin{trivlist}
\item 
For convenience of the reader we review the notion of quasiconvexity on linear strains and collect a couple of auxiliary results in the specific form they were applied above. 
\item
\begin{definition}\label{defi:sym-rk-one}
A locally bounded Borel function $ f : \M{n}{n}_{\sm} \to \R $ is {\it symmetric-quasi\-convex} (resp.~{\it symmetric-rank-one convex})
if the function
\[ \M{n}{n} \to \R, \quad X \mapsto f( X_{\sm} ) \]
is quasiconvex (resp.~rank-one convex).
\end{definition}
\item
Therefore, a symmetric-quasiconvex function $f$ must fulfil 
\[ \int_{ \Y^{n} } f( X + \E \varphi(x) ) \x \ge f(X) \]
for every $ X \in \M{n}{n}_{\sm} $ and every $ \varphi \in C^{\i}_{c}( \Y^{n} ; \R^{n} ) $ whereas
symmetric-rank-one convexity means that
\[ t \mapsto f( X + t \ a \odot b ) \]
is convex for all $ X \in \M{n}{n}_{\sm} $ and $ a , b \in \R^{n} $. 
We denote the {\it symmetric-quasiconvex envelope} by $ f^{\qcls} $. It is related to the quasiconvex envelope by the formula
\[ (f \circ \sm)^{\qc} = f^{\qcls} \circ \sm. \]
($\sm : \M{n}{n} \to \M{n}{n}_{\sm}$ is simply the symmetrizing projection.)
For the proof and other properties, we refer to \cite{Zhang:04}.
\item
The following estimate on the Lipschitz constant is proved in \cite[Lemma~2.2]{BKK}:
\begin{lemma}
\label{lemma:BKK}
If $ f : B_{2r}( X_{0} ) \subset \M{m}{n} \to \R $ is separately convex, then
\[ {\rm lip}( f ; B_{r}( X_{0} ) ) \le \sqrt{mn} \frac{ {\rm osc}( f ; B_{2r}( X_{0} ) ) }{r}, \]
where ${\rm osc}( f ; U ) = \sup \{ | f(X) - f(Y) | : X,Y \in U \}$. 
\end{lemma}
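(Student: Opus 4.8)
The statement to prove is Lemma~\ref{lemma:BKK}: for a separately convex $f$ on $B_{2r}(X_0) \subset \M{m}{n}$, one has $\mathrm{lip}(f;B_r(X_0)) \le \sqrt{mn}\,\mathrm{osc}(f;B_{2r}(X_0))/r$.

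The plan is to reduce to the one-dimensional case along each coordinate axis and then combine the coordinate-wise Lipschitz bounds via the Euclidean structure of $\M{m}{n} \cong \R^{mn}$. First I would recall the elementary fact about convex functions in one variable: if $\phi:(a,b)\to\R$ is convex and $[c,d] \subset (a,b)$, then $\phi$ is Lipschitz on $[c,d]$ with constant bounded by $\max\{|\phi(c)-\phi(a^+)|,|\phi(d)-\phi(b^-)|\}/\min\{c-a,b-d\}$; more crudely, on an interval of half-length $\rho$ sitting centrally inside an interval of half-length $2\rho$, the Lipschitz constant is at most $\mathrm{osc}(\phi)/\rho$ over the larger interval. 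Applying this along the $k$-th coordinate direction $e_k$ (for $k=1,\dots,mn$), for any $X \in B_r(X_0)$ the segment $t \mapsto f(X + te_k)$ for $|t|$ up to $r$ stays inside $B_{2r}(X_0)$, so $|\partial_k f(X)| \le \mathrm{osc}(f;B_{2r}(X_0))/r$ at points of differentiability.

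Next I would handle the passage from partial-derivative bounds to a global Lipschitz estimate. Since separately convex functions are locally Lipschitz (hence differentiable a.e.\ by Rademacher), at a.e.\ point $X \in B_r(X_0)$ the gradient $\nabla f(X)$ exists and satisfies $|\partial_k f(X)| \le \mathrm{osc}(f;B_{2r}(X_0))/r$ for each of the $mn$ coordinates, whence $|\nabla f(X)| \le \sqrt{mn}\,\mathrm{osc}(f;B_{2r}(X_0))/r$. The Lipschitz constant of $f$ on the convex set $B_r(X_0)$ equals the essential supremum of $|\nabla f|$ over that set (this is the standard characterization: integrate along segments, which remain in the ball by convexity), giving exactly the claimed bound $\mathrm{lip}(f;B_r(X_0)) \le \sqrt{mn}\,\mathrm{osc}(f;B_{2r}(X_0))/r$.

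The only mild subtlety — really the step needing a word of care rather than a genuine obstacle — is justifying a.e.\ differentiability and the identity $\mathrm{lip} = \mathrm{ess\,sup}|\nabla f|$ for a merely separately convex function; this follows because separate convexity on an open set implies local Lipschitz continuity (a classical fact, e.g.\ via monotonicity of one-dimensional difference quotients together with boundedness, which here is guaranteed by finite oscillation), after which Rademacher applies. Since this is cited from \cite[Lemma~2.2]{BKK}, I would simply invoke that reference and sketch the argument above. No macro or environment issues arise, as the proof is elementary and self-contained modulo the cited lemma.
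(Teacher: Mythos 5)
Your argument is correct and is essentially the proof of the cited result \cite[Lemma~2.2]{BKK}: bound each partial derivative at a.e.\ point of $B_r(X_0)$ by the one-dimensional convexity estimate $|\partial_k f(X)|\le \mathrm{osc}(f;B_{2r}(X_0))/r$ (the coordinate segment of half-length $r$ through $X$ stays in $B_{2r}(X_0)$), combine the $mn$ coordinates to get $|\nabla f|\le\sqrt{mn}\,\mathrm{osc}/r$, and integrate along segments of the convex set $B_r(X_0)$. The paper itself offers no proof and simply invokes that reference, so there is nothing further to reconcile; your remark that local Lipschitz continuity (hence Rademacher differentiability) must first be secured from separate convexity plus finite oscillation is exactly the right point of care.
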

\item 
Next we state an equiintegrability result of Fonseca, M{\"u}ller and Pedregal,
see \cite[Lemma~1.2]{FonsecaMuellerPedregal} and \cite[Lemma~8.3]{Pedregal}:
\begin{lemma}
\label{lemma:equi-int}
Let $ \Omega \subset \R^{n} $ be an open bounded set, 
and let $ \{ u_{i} \}_{ i \in \N } $ be a bounded sequence in $ W^{1, p}( \Omega ; \R^{m} ) $, $ 1 < p < \i $. 
There exist a subsequence $ \{ u_{ i_{k} } \}_{ k \in \N } $ 
and a sequence $ \{ v_{k} \}_{ k \in \N } \subset W^{1, p}(\Omega; \R^m) $ such that 
\[ \lim_{k \to \i} \big| \{ \nabla v_{k} \ne \nabla u_{ i_{k} } \} \cup \{ v_{k} \ne u_{ i_{k} } \} \big| = 0 \] 
and $ \{ |\nabla v_k|^{p} \}_{ k \in \N } $ is equiintegrable. 
Moreover, if $ u_{i} \weakly u $ in $ W^{1, p}(\Omega; \R^{m} ) $, 
then the $ v_{k} $ can be chosen in such a way that $ v_{k} = u $ on $ \partial \Omega $ 
and $ v_{k} \weakly u $ in $ W^{1, p}(\Omega; \R^m) $. 
\end{lemma}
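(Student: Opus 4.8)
The plan is to obtain each $v_k$ as a Lipschitz truncation of $u_{i_k}$ at a suitably chosen level $\lambda_k\to\i$ built from the Hardy--Littlewood maximal function, and to force the $p$-equiintegrability of $\{|\nabla v_k|^p\}$ by selecting the levels carefully -- the essential point, due to Fonseca--M\"uller--Pedregal \cite{FonsecaMuellerPedregal,Pedregal} and, in a related form, to Zhang.

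First I would extract a subsequence (not relabelled) along which $u_i\weakly u$ in $W^{1,p}(\Omega;\R^m)$ and $|\nabla u_i|^p\L^n$ is weak-$*$ convergent in $M(\overline\Omega)$; multiplying by a fixed cut-off (or using a bounded extension) one may treat $\{u_i\}$ as a bounded sequence in $W^{1,p}(\R^n;\R^m)$ with gradients supported in a fixed ball, so that $C_0:=\sup_i\int_{\R^n}M(|\nabla u_i|)^p\x<\i$ by the strong $(p,p)$-bound for $M$. The tool is the maximal-function Lipschitz truncation (Acerbi--Fusco, Liu): for $w\in W^{1,p}(\R^n;\R^m)$ and $\lambda>0$ there is $w^\lambda\in W^{1,\i}(\R^n;\R^m)$ with $\|\nabla w^\lambda\|_{L^\i}\le C(n)\lambda$, with $\{w\ne w^\lambda\}$ contained up to a null set in $\{M(|\nabla w|)>\lambda\}$, with $\nabla w^\lambda=\nabla w$ a.e.\ on $\{w=w^\lambda\}$, and hence with $|\{w\ne w^\lambda\}|\le C\lambda^{-p}\int_{\{M(|\nabla w|)>\lambda\}}M(|\nabla w|)^p\x$, $|\nabla w^\lambda|\le C\min(\lambda,M(|\nabla w|))$ a.e., and $\int|\nabla w^\lambda|^p\x\le C\int|\nabla w|^p\x$. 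Applying this on $\R^n$ and restricting to $\Omega$ keeps $u_i^\lambda$ in $W^{1,p}(\Omega;\R^m)$, equal to $u_i$ off the small set $\{M(|\nabla u_i|)>\lambda\}$.

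The heart of the matter is the choice of levels: a truncation at any single level $\lambda_i\to\i$ is not enough, since it still leaves a bounded-but-not-small amount of concentrated gradient energy at the intermediate scales below $\lambda_i$. Instead one uses that the dyadic pieces $\int_{\{2^\ell<M(|\nabla u_i|)\le2^{\ell+1}\}}M(|\nabla u_i|)^p\x$ sum over $\ell$ to at most $C_0$: for every $h\in\N$ a pigeonhole over the $h$ scales between $2^h$ and $2^{2h}$, together with the biting lemma of Chacon applied to $\{|\nabla u_i|^p\}$ (localising the concentrations on a shrinking family of sets), yields level assignments $i\mapsto\lambda_{i,h}\in[2^h,2^{2h}]$ for which $v_i^{(h)}:=u_i^{\lambda_{i,h}}$ satisfies $|\{u_i\ne v_i^{(h)}\}|\le C\,2^{-ph}$ and $\{|\nabla v_i^{(h)}|^p\}_i$ is $p$-equiintegrable up to an error vanishing as $h\to\i$. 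A diagonal extraction $i_k$ with $v_k:=v_{i_k}^{(k)}$ then makes $\{|\nabla v_k|^p\}_k$ genuinely $p$-equiintegrable while $|\{v_k\ne u_{i_k}\}\cup\{\nabla v_k\ne\nabla u_{i_k}\}|\to0$.

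For the addendum one performs the same construction on $u_i-u$ and combines it with a cut-off supported away from $\partial\Omega$ and a De Giorgi slicing over concentric boundary layers (to absorb the gradient-of-cut-off terms), which gives $v_k-u\in W^{1,p}_0(\Omega;\R^m)$, i.e.\ $v_k=u$ on $\partial\Omega$; that $v_k\weakly u$ follows since $\{v_k\}$ is bounded in $W^{1,p}$ and $v_k-u_{i_k}\to0$ in measure, hence in $L^p$ because $\{|v_k-u_{i_k}|^p\}$ is equiintegrable by the Sobolev embedding. The principal obstacle is exactly this level-selection and diagonalisation step -- simultaneously killing the concentrations in the gradients and keeping the exceptional set $\{v_k\ne u_{i_k}\}$ of vanishing measure -- which is why the dyadic bookkeeping (or, equivalently, the biting lemma) is unavoidable; once it is in place, the three claimed properties, and their boundary-value refinements, follow from the listed estimates for the truncation operator.
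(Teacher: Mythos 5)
The paper does not actually prove this lemma: it is quoted verbatim from Fonseca--M\"uller--Pedregal \cite{FonsecaMuellerPedregal} and Pedregal \cite{Pedregal}, and your plan --- maximal-function Lipschitz truncation at levels $\lambda_k\to\infty$ chosen via a biting-type argument, followed by a diagonal extraction --- is precisely the strategy of those references. The routine parts of your sketch (the Acerbi--Fusco truncation estimates, the measure bound $|\{u_i\ne u_i^\lambda\}|\lesssim\lambda^{-p}$, and the boundary-value/weak-convergence addendum via cut-off, slicing and the Sobolev embedding) are all correct as stated, modulo the usual care needed to extend to $\R^n$ when $\Omega$ is merely open and bounded (a fixed cut-off alters $u_i$ on a boundary layer of positive measure, so one must either use an extension domain, exhaust $\Omega$ from inside with a further diagonalisation, or run the truncation intrinsically).

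The one place where your argument is asserted rather than proved is exactly the step you yourself flag as the heart of the matter. The pigeonhole over the dyadic shells in $[2^h,2^{2h}]$ only makes a \emph{single} shell $\{2^{\ell}<M(|\nabla u_i|)\le 2^{\ell+1}\}$ carry mass $\le C_0/h$; it does not control the quantity that actually threatens equiintegrability, namely $\lambda_{i,h}^p\,|\{M(|\nabla u_i|)>\lambda_{i,h}\}|$, which bounds the contribution of the truncated region and which a priori is only \emph{bounded} (by the full tail mass $\int_{\{M(|\nabla u_i|)>\lambda_{i,h}\}}M(|\nabla u_i|)^p\x$, i.e.\ by the concentrating part of the energy), not small. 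What closes this is the biting lemma used quantitatively: after passing to a subsequence along which $\alpha_k:=\lim_i\int_\Omega\big(M(|\nabla u_i|)^p-k\big)^+\x$ exists for all $k$, one chooses $\lambda_i\to\infty$ slowly enough that $\int_\Omega\big(M(|\nabla u_i|)^p-\lambda_i^p\big)^+\x$ stays within $o(1)$ of $\inf_k\alpha_k$; then $\min\{M(|\nabla u_i|)^p,\lambda_i^p\}$, and hence $|\nabla v_i|^p\le C\min\{M(|\nabla u_i|),\lambda_i\}^p$, is equiintegrable. Your sketch names the biting lemma but does not carry out this calibration, and the two-parameter family $v_i^{(h)}$ with a subsequent diagonalisation hides a genuine order-of-quantifiers issue (the modulus of equiintegrability off the biting sets $E_r$ degenerates as $r\to\infty$, while you need $|E_{r(h)}|\ll 2^{-2ph}$). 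So: right approach, correct auxiliary estimates, but the central equiintegrability claim needs the biting-lemma calibration spelled out --- the dyadic pigeonhole alone would not deliver it.
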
 
\item 
The following version of the Besicovitch derivation theorem is shown in \cite[Theorem~1.153]{FonsecaLeoni}.
\begin{theo}
\label{theo:Besicovitch}
Let $ \mu, \nu $ be two positive regular Borel measures on $ \R^{n} $.
There exists a Borel set $ N \subset \R^{n} $ with $ \mu(N) = 0 $ 
such that for any $ x \in \R^{n} \setminus N $ and any convex compact neighbourhood of the origin $ C \subset \R^{n} $
\[ \frac{ d \nu^{a} }{ d \mu }(x) = \lim_{ r \searrow 0 } \frac{ \nu( x + r C ) }{ \mu( x + r C ) } \in \R \]
and
\[ \lim_{ r \searrow 0 } \frac{ \nu^{s}( x + r C ) }{ \mu( x + r C ) } = 0, \]
where
\[ \nu = \nu^{a} + \nu^{s},
\quad
\nu^{a} \ll \mu
\quad \mbox{and} \quad
\nu^{s} \perp \mu. \]
\end{theo}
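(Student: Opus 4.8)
The plan is to run the classical differentiation argument built on the Besicovitch covering theorem, adapted to homothets of a fixed convex body; the uniformity in $C$ is a bookkeeping refinement on top. Fix a convex compact neighbourhood $C$ of the origin; replacing $C$ by its closure we may assume $C$ closed, so $x+rC$ is compact and $\mu(x+rC),\nu(x+rC)<\i$ for all $r>0$. Since $0\in C$ and $C$ is convex, $rC\subset r'C$ for $0<r<r'$, hence $r\mapsto\mu(x+rC)$ and $r\mapsto\nu(x+rC)$ are non-decreasing, which makes the upper and lower derivatives $\overline D\nu(x):=\limsup_{r\searrow0}\nu(x+rC)/\mu(x+rC)$ and $\underline D\nu(x):=\liminf_{r\searrow0}\nu(x+rC)/\mu(x+rC)$ Borel functions of $x$ (measurability is routine, using the monotonicity in $r$). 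Write the Lebesgue decomposition $\nu=\nu^a+\nu^s$ of $\nu$ with respect to $\mu$, so that $\nu^a=g\,\mu$ with $g\in L^1_{\rm loc}(\mu)$, $g\ge0$, and $\nu^s\perp\mu$; fix a Borel set $S$ with $\mu(S)=0=\nu^s(\R^n\setminus S)$. The tool I would use is the Vitali--Besicovitch covering theorem: for any Radon measure $\sigma$ on $\R^n$, any set $E$, and any family of sets $x+rC$ ($x\in E$) that finely covers $E$, there is a countable pairwise disjoint subfamily covering $\sigma$-almost all of $E$. This rests on the Besicovitch (Morse) covering theorem for homothets of a convex body and, crucially, does \emph{not} require $\mu$ to be doubling.

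From this I extract a comparison lemma: for Radon measures $\mu,\lambda$, a real $t\ge0$ and a bounded Borel set $A$, (i) if $\liminf_{r\searrow0}\lambda(x+rC)/\mu(x+rC)\le t$ for every $x\in A$ then $\lambda(A)\le t\,\mu(A)$; (ii) if $\limsup_{r\searrow0}\lambda(x+rC)/\mu(x+rC)\ge t$ for every $x\in A$ then $\lambda(A)\ge t\,\mu(A)$. For (i), given $\eps>0$ one picks a bounded open $U\supset A$ with $\mu(U)<\mu(A)+\eps$ (outer regularity), observes that $\{x+rC:x\in A,\ x+rC\subset U,\ \lambda(x+rC)\le(t+\eps)\mu(x+rC)\}$ finely covers $A$ (the degenerate case $\mu(x+rC)=0$ forces $\lambda(x+rC)=0$, so is harmless), applies Vitali--Besicovitch with $\sigma=\lambda$ to get disjoint $\{x_j+r_jC\}$ covering $\lambda$-a.a.\ of $A$, and sums: $\lambda(A)\le\sum_j\lambda(x_j+r_jC)\le(t+\eps)\sum_j\mu(x_j+r_jC)\le(t+\eps)\mu(U)$, then lets $\eps\to0$. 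Statement (ii) is symmetric, trapping $A$ in an open set of almost minimal $\lambda$-measure and applying Vitali--Besicovitch with $\sigma=\mu$. Unbounded $A$ are handled by intersecting with large balls.

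With the comparison lemma the theorem follows by the standard steps. \emph{Singular part:} for $t>0$ the set $A_t:=\{x\in\R^n\setminus S:\overline D\nu^s(x)>t\}$ satisfies $\nu^s(A_t)\ge t\,\mu(A_t)$ by (ii) (localizing to balls), while $\nu^s(A_t)=0$ since $A_t\cap S=\emptyset$; hence $\mu(A_t)=0$, and taking $t=1/k$ and adjoining $S$ gives $\overline D\nu^s=0$ $\mu$-a.e. \emph{Absolutely continuous part:} for rationals $0\le a<b$, applying (i) with $t=a$ and (ii) with $t=b$ to $\{x:\underline D\nu^a(x)<a<b<\overline D\nu^a(x)\}$ forces this set to be $\mu$-null, and $\mu(\{\overline D\nu^a\ge t\})\le t^{-1}\nu^a(\cdot)$ on balls shows $\{\overline D\nu^a=\i\}$ is $\mu$-null; thus $D\nu^a(x):=\lim_{r\searrow0}\nu^a(x+rC)/\mu(x+rC)$ exists in $[0,\i)$ for $\mu$-a.e.\ $x$. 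Calling this limit $g'$, I identify $g'=g$ by showing $\nu^a=g'\mu$: for a bounded Borel set $B$ and $t>1$, split $B$ into $\{g'=0\}$ (where $\overline D\nu^a=0$, so $\nu^a=0$ there by (i)) and the level sets $B_m:=B\cap\{t^m\le g'<t^{m+1}\}$; on $B_m$, (i) and (ii) give $t^m\mu(B_m)\le\nu^a(B_m)\le t^{m+1}\mu(B_m)$, while $t^m\mu(B_m)\le\int_{B_m}g'\,d\mu\le t^{m+1}\mu(B_m)$ by construction, so summing over $m\in\Z$ yields $t^{-1}\int_B g'\,d\mu\le\nu^a(B)\le t\int_B g'\,d\mu$; letting $t\downarrow1$ gives $\nu^a(B)=\int_B g'\,d\mu$, whence $g'=g$ $\mu$-a.e.\ by uniqueness of the Radon--Nikodym density. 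Adjoining the countably many exceptional null sets produces a Borel $N_C$ with $\mu(N_C)=0$ such that for $x\notin N_C$ one has $\nu^s(x+rC)/\mu(x+rC)\to0$ and $\nu^a(x+rC)/\mu(x+rC)\to g(x)=\tfrac{d\nu^a}{d\mu}(x)\in\R$; adding the two limits gives the assertion for $\nu$ and this fixed $C$.

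To obtain one null set valid for all $C$ simultaneously, I fix a countable family $\{Q_m\}$ of convex polytopes with vertices in $\mathbb Q^n$ that are neighbourhoods of the origin, set $N:=\bigcup_m N_{Q_m}$ (still $\mu$-null), note that an arbitrary convex compact neighbourhood $C$ of the origin can be sandwiched between such polytopes arbitrarily tightly, and transfer the conclusion from the $Q_m$ to $C$ off $N$ by monotonicity. I expect two points to require care. The genuinely substantial input is the covering theorem in the form valid for homothets of a general convex body: the easy $5r$-type Vitali lemma is unavailable here precisely because a general Radon measure need not be asymptotically doubling, and this is where the argument is not elementary. The second, more technical, point is the uniformity in $C$ itself: since $\mu$ need not be doubling, the squeezing between inner and outer rational polytopes is delicate, and I would ultimately invoke the treatment in \cite{FonsecaLeoni} for this refinement. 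Everything else is routine manipulation of outer regularity together with the comparison lemma.
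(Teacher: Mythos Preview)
The paper does not prove this theorem; it simply quotes it from \cite[Theorem~1.153]{FonsecaLeoni} without further argument. Your proposal therefore goes well beyond the paper, supplying a correct and detailed outline of the classical differentiation argument for a \emph{fixed} convex body $C$: the comparison lemma via the Vitali--Besicovitch (Morse) covering theorem, the separate treatment of the singular and absolutely continuous parts, and the identification of the limit with the Radon--Nikodym density are all carried out properly.

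You are also right to flag the uniformity of the exceptional null set over \emph{all} convex compact neighbourhoods $C$ as the one genuinely delicate point. Your proposed sandwich between rational polytopes does not go through on its own: without any doubling-type control on $\mu$, the inequality $\nu(x+rC)/\mu(x+rC)\le\nu(x+rQ_{m'})/\mu(x+rQ_m)$ for $Q_m\subset C\subset Q_{m'}$ cannot be closed up, since there is no a~priori comparison between $\mu(x+rQ_m)$ and $\mu(x+rQ_{m'})$. You acknowledge this and fall back on \cite{FonsecaLeoni} for the refinement, so in the end your treatment and the paper's rest on the same reference for the full statement. (The actual mechanism in Fonseca--Leoni is not a sandwich over a countable family of bodies but rather a version of the Morse covering theorem that handles all centred convex bodies at once with a dimensional constant, which is what makes the single null set possible.)
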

\item
The following theorem gathers the relevant results from \cite[Section 2]{BorchersSohr}. 
See also the references therein, as the original proofs go back to Bogovskii.
Therefore, $ {\mathcal B} $ is sometimes referred to as Bogovskii's operator.
\item
\begin{theo}
\label{theo:Galdi}
Let $ \Omega \subset \R^{n} $ be a bounded Lipschitz domain and $ 1 < q < \i $.
There exists a linear operator $ {\mathcal B} = {\mathcal B}_{\Omega,q}  : L^{q}( \Omega ) \to W^{1,q}_{0}( \Omega ; \R^{n} ) $ 
with the following properties:
\begin{itemize}
\item 
For every $ f \in L^{q}( \Omega ) $ with $ \int_{\Omega} f(x) \x = 0 $, it holds
\[ \DIV {\mathcal B} f = f. \]
\item 
For every $ f \in L^{q}( \Omega ) $
\[ \| \nabla ( {\mathcal B} f ) \|_{ L^{q}( \Omega ; \M{n}{n} ) } \le C \| f \|_{ L^{q}( \Omega ) }. \]
The constant $C$ depends only on $ \Omega $ and $q$, and is translation- and scaling-invariant.
\item 
If $ f \in C^{\i}_{c}( \Omega ) $, then $ {\mathcal B} f \in C^{\i}_{c}( \Omega ; \R^{n} ) $.
\end{itemize}
\end{theo}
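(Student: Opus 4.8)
The plan is to recall the explicit integral operator of Bogovskii and check the three listed properties; I would first treat a domain $\Omega$ that is star-shaped with respect to an open ball and then deduce the general Lipschitz case by a finite decomposition. So suppose first $\Omega$ is star-shaped with respect to an open ball $B \subset\subset \Omega$, fix $\omega \in C_c^{\i}(B)$ with $\int_B \omega = 1$, and set for $f \in C_c^{\i}(\Omega)$
\[ ({\mathcal B}f)(x) := \int_{\Omega} f(y)\, k(x,y)\y, \qquad
   k(x,y) := \frac{x-y}{|x-y|^{n}}\int_{|x-y|}^{\i} \omega\Big( y + s\,\tfrac{x-y}{|x-y|} \Big) s^{n-1}\, ds . \]
By the substitution $z := y + s\,\tfrac{x-y}{|x-y|}$ one rewrites $({\mathcal B}f)(x)$ as an integral over the cone spanned by $x$ and $\supp\omega$; since $\Omega$ is star-shaped with respect to $B \supset \supp \omega$, this cone lies in $\Omega$, and from the rewritten formula one reads off that ${\mathcal B}f \in C_c^{\i}(\Omega;\R^{n})$ whenever $f \in C_c^{\i}(\Omega)$ --- the third bullet.

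Next I would establish the divergence identity: a direct differentiation of the kernel gives, in the sense of distributions, $\DIV_{x}\, k(x,y) = \delta(x-y) - \omega(x)$; equivalently, $\DIV({\mathcal B}f)(x) = f(x) - \omega(x) \int_{\Omega} f(y)\y$, so that $\int_\Omega f = 0$ forces $\DIV {\mathcal B}f = f$. For the $L^{q}$-bound one expands $\nabla_{x}({\mathcal B}f)$ and observes that, up to a weakly singular (hence $L^q$-bounded) remainder, the map $f \mapsto \nabla({\mathcal B}f)$ is a singular integral operator whose kernel is homogeneous of degree $-n$ in $x-y$ and has vanishing mean over spheres, i.e.\ a Calder\'on--Zygmund kernel; the Calder\'on--Zygmund theorem then yields $\|\nabla({\mathcal B}f)\|_{L^{q}} \le C\|f\|_{L^{q}}$ for all $1 < q < \i$ with $C = C(\Omega,q)$, and combined with the Poincar\'e inequality (${\mathcal B}f$ has zero trace) this shows ${\mathcal B}$ extends to a bounded linear operator $L^{q}(\Omega) \to W^{1,q}_{0}(\Omega;\R^{n})$; density of $C_c^{\i}$ in $L^{q}$ transfers the two identities to the whole space (the divergence identity on the mean-zero subspace). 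Transporting $\omega$ under $x \mapsto \lambda x + a$ and using that $k$ transforms homogeneously of the right degree gives the scaling and translation invariance of $C$.

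For a general bounded Lipschitz domain I would use the standard fact that $\Omega$ is a finite union $\Omega = \bigcup_{i=1}^{N}\Omega_{i}$ of subdomains each star-shaped with respect to a ball. Given $f \in L^{q}(\Omega)$ with $\int_\Omega f = 0$, write $f = \sum_{i=1}^{N} f_{i}$ with $f_{i}$ supported in $\Omega_{i}$ and $\int_{\Omega_{i}} f_{i} = 0$ --- arranged inductively from a partition of unity subordinate to $\{\Omega_{i}\}$ by correcting the non-zero means with mass transported through the overlaps by fixed bump functions --- and put ${\mathcal B}f := \sum_{i}{\mathcal B}_{\Omega_{i},q} f_{i}$. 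Linearity, the $L^{q}$-estimate, the relation $\DIV {\mathcal B}f = f$ and preservation of $C_c^{\i}$ are then inherited from the star-shaped pieces.

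The hard part will be the $L^{q}$-estimate: one has to verify that after differentiation the kernel is a genuine Calder\'on--Zygmund kernel (correct homogeneity together with the cancellation on spheres) so that singular-integral theory applies uniformly in $q \in (1,\i)$, and one must track all constants so that the final $C$ depends only on $\Omega$ and $q$ and is invariant under dilations and translations. By contrast, the decomposition into star-shaped pieces together with the splitting of $f$ into mean-zero parts is routine bookkeeping.
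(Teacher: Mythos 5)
The paper does not prove this theorem at all: it is quoted verbatim from Borchers--Sohr (Section 2), with the construction going back to Bogovskii, so there is no in-paper argument to compare against. Your sketch reproduces exactly that standard proof --- the explicit Bogovskii kernel on domains star-shaped with respect to a ball, the divergence identity $\DIV \mathcal{B}f = f - \omega \int f$, the Calder\'on--Zygmund estimate for the differentiated (variable, degree $-n$ homogeneous, mean-zero) kernel, and the decomposition of a bounded Lipschitz domain into finitely many star-shaped pieces with a mean-zero splitting of $f$ through the overlaps --- and it is correct as an outline, with the one genuinely technical step (verifying the variable singular-integral structure uniformly in $q$) correctly flagged as the part that requires the detailed work carried out in the cited references.
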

\end{trivlist}
\end{appendix}

\typeout{References}

\end{document}